\documentclass[11pt]{article}
\pdfoutput=1
\usepackage[utf8]{inputenc}
\usepackage[T1]{fontenc}
\usepackage{lmodern}
\usepackage{microtype}
\usepackage[margin=1in]{geometry}
\usepackage[shortlabels]{enumitem}

\usepackage{double-to-bicat-macros}
\numberwithin{equation}{section}

\usepackage[breaklinks,hidelinks,bookmarksnumbered]{hyperref}
\usepackage[capitalize,noabbrev,nameinlink]{cleveref}

\usepackage{thmtools}
\declaretheorem[within=section]{theorem}
\declaretheorem[sibling=theorem]{proposition}
\declaretheorem[sibling=theorem]{corollary}
\declaretheorem[sibling=theorem]{lemma}
\declaretheorem[sibling=theorem,style=definition]{definition}
\declaretheorem[sibling=theorem,style=definition,qed=\qedsymbol]{construction}
\declaretheorem[sibling=theorem,style=remark,qed=\qedsymbol]{remark}
\declaretheorem[sibling=theorem,style=remark,qed=\qedsymbol]{example}

\usepackage[style=alphabetic,maxbibnames=10]{biblatex}
\addbibresource{double-to-bicat.bib}
\renewbibmacro{in:}{}

\title{Transposing cartesian and other structure in double categories}
\author{Evan Patterson}
\date{}

\begin{document}
\maketitle

\begin{abstract}
  The cartesian structure possessed by relations, spans, profunctors, and other
  such morphisms is elegantly expressed by universal properties in double
  categories. Though cartesian double categories were inspired in part by the
  older program of cartesian bicategories, the precise relationship between the
  double-categorical and bicategorical approaches has so far remained
  mysterious, except in special cases. We provide a formal connection by showing
  that every double category with iso-strong finite products, and in particular
  every cartesian equipment, has an underlying cartesian bicategory. To do so,
  we develop broadly applicable techniques for transposing natural
  transformations and adjunctions between double categories, extending a line of
  previous work rooted in the concepts of companions and conjoints.
\end{abstract}

\section{Introduction}
\label{sec:introduction}

The quest to axiomatize the elusive cartesian structure possessed by relations,
spans, and profunctors has been a long one. Bicategories of such morphisms have
an evident set-theoretic cartesian product, but it is not the bicategorical
product. What, then, is its category-theoretic nature? Ideally, the
set-theoretic product would be characterized by a universal property.

An early attempt to solve this puzzle was made by Carboni and Walters
\cite{carboni1987}, who axiomatized \emph{`a bicategory of relations'} as a
locally posetal, symmetric monoidal bicategory in which each object carries the
structure of a commutative comonoid, subject to several axioms. Due in part to
the uncertain status of symmetric monoidal bicategories at the time, Carboni and
Walters were able to define a cartesian bicategory only in the locally posetal
case. Two decades later, Carboni, Kelly, Walters, and Wood proposed a general
definition of a \emph{cartesian bicategory} \cite{carboni2008}. Their later
approach has the virtue of making it more obvious that being cartesian is a
\emph{property} of, not a structure on, a bicategory, yet the definition is a
complicated one that takes much of the paper to state completely.

Further progress depended on realizing that the cartesian structure of
relations, spans, and profunctors is most simply expressed by a universal
property not in bicategories, but in double categories. Building on the double
limits introduced by Grandis and Paré \cite{grandis1999}, Aleiferi defined a
\emph{cartesian double category} to be a double category with binary and nullary
double products \cite{aleiferi2018}. Recently, reviving an idea by Paré
\cite{pare2009}, the author developed a stronger notion of a \emph{double
  category with finite products} \cite{patterson2024}, capturing products of
various shapes, including local products. Behind both approaches is the insight
that, by using double adjunctions \cite{grandis2004}, products in double
categories can be defined as right adjoints to diagonals, analogously to
products in categories.

Yet a lack of clarity still prevails as few formal connections have been made
between cartesian bicategories and double categories with finite products.
Lambert has shown that the underlying bicategory of any \emph{locally posetal}
cartesian equipment is cartesian \cite[\mbox{Proposition 3.1}]{lambert2022}. The
general situation is entangled with issues of bicategorical coherence, and no
formal results are known. Our original impetus for this work was to close this
gap. We do so by showing that the underlying bicategory of any double category
with iso-strong finite products is cartesian
(\cref{thm:underlying-cartesian-bicategory}). It follows that the underlying
bicategory of a cartesian equipment is cartesian
(\cref{cor:underlying-cartesian-bicategory}).

The technique that we use to prove this result is, perhaps, more interesting
than the result itself, as it sheds light on the relationship between double
categories and bicategories generally. In fact, only in the paper's final
section do we study cartesian and cocartesian structure. In the rest of the
paper, we explore a more nebulous question: how can structure in double
categories be ``transposed'' from the strict (2-categorical) direction to the
weak (bicategorical) direction?

Systematic methods to transpose structure in double categories were first
devised by Garner and Gurski \cite{garner2009} and by Shulman
\cite{shulman2010}, with antecedents in earlier work by Shulman
\cite[\mbox{Appendix B}]{shulman2008} and improvements made later by Hansen and
Shulman \cite{hansen2019}. All of these works seek expedient ways to construct
tricategorical structures that avoid long calculations verifying coherence
axioms. Garner and Gurski construct tricategories from ``locally-double
bicategories,'' whereas Hansen and Shulman construct symmetric monoidal
bicategories from symmetric monoidal double categories. At the heart of such
constructions are companions assumed to exist in a base double category.

Companions and conjoints are the basic tool for transposing morphisms in a
double category \cite{brown1976,grandis2004,shulman2008}. A \emph{companion} of
an arrow in a double category is a universal choice of proarrow with the same
domain and codomain; a \emph{conjoint} is similar, except that it has the
opposite orientation. Companions and conjoints play an indispensable role in
this work as in previous ones.

The utility of companions as a general tool is vastly increased by lifting the
property of having a companion from individual morphisms in a double category to
natural transformations between double functors. The companion of a natural
transformation, if it exists, should be a transformation of another kind, whose
components at objects are proarrows rather than arrows
\cite{grandis1999,grandis2019}. We will call them \emph{protransformations}
(\cref{def:lax-protransformation}); they have also be called ``loose
transformations'' or, under our orientation convention, ``horizontal
transformations,'' and they generalize the pseudo natural transformations
familiar from bicategory theory.

But there is a twist: the companion of a natural transformation whose component
arrows have companions is usually only an \emph{oplax} protransformation
(\cref{thm:companion-transformation}). Dually, the conjoint of a natural
transformation is usually only a \emph{lax} protransformation. These generalize
the oplax and lax natural transformations from bicategory theory. The appearance
of laxness is not an accident but a fundamental aspect of transposing structure
in double categories, and we will see that it explains why cartesian
bicategories take their distinctive form. Under very special conditions, the
companion or conjoint of a natural transformation is a pseudo protransformation
(\cref{cor:companion-transformation-pseudo}), as also recently noticed by
Gambino, Garner, and Vasilakopoulou \cite[\S{3}]{gambino2022}.

Having found an environment in which to exhibit companions of natural
transformations, we can apply the universal property of companions to deduce
powerful results for transposing structure with a minimum of calculation. Most
importantly, we can transpose a double adjunction, turning the (strict) triangle
identities that hold between natural transformations into triangle identities
between oplax protransformations that hold only up to coherent isomorphism
(\cref{thm:transpose-adjunction}). Due to the oplaxity, the latter is not quite
a biadjunction in the standard sense but is rather a kind of \emph{lax
  adjunction} \cite{gray1974}.

From here, the passage to cartesian bicategories is quite direct. A cartesian
bicategory does \emph{not} obtain its cartesian product as a right biadjoint to
the diagonal; if it did, the cartesian product would simply be the bicategorical
product, the failure of which was the impetus to invent cartesian bicategories
in the first place. But Trimble has shown that the axioms for a cartesian
bicategory can be reformulated using lax adjunctions, such that the cartesian
product is a right lax adjoint to the diagonal, built out of pseudofunctors,
\emph{oplax} natural transformations, and invertible modifications
(\cref{def:cartesian-bicategory}, following \cite{trimble2009}). Thus, from a
double category with finite products, we can produce a cartesian bicategory
simply by transposing the double adjunction between diagonals and products into
a lax adjunction (\cref{thm:underlying-cartesian-bicategory}).

It appears that all of the known, genuine\footnote{By ``genuine,'' we mean
  cartesian bicategories that do not already have finite products in the
  standard 2-categorical or bicategorical senses.} examples of cartesian
bicategories arise from this construction. Important examples include:
\begin{itemize}[noitemsep]
  \item $\Rel(\cat{S})$, the double category of relations in a regular category
    $\cat{S}$;
  \item $\Span(\cat{S})$, the double category of spans in a finitely complete
    category $\cat{S}$;
  \item $\Prof(\cat{S})$, the double category of profunctors internal to a
    finitely complete category $\cat{S}$ with coequalizers that are stable under
    pullback;
  \item $\catV\text{-}\mathbb{M}\mathsf{at}$, the double category of matrices
    valued in an (infinitary) distributive category $\catV$;
  \item $\catV\text{-}\Prof$, the double category of profunctors enriched over a
    distributive category $\catV$ with coequalizers preserved by products in
    each variable.
\end{itemize}
Since all of these double categories are cartesian equipments, their underlying
bicategories become cartesian bicategories by transposing the double adjunctions
(\cref{cor:underlying-cartesian-bicategory}).

The asymmetry between products and coproducts in a typical double category such
as that of relations or spans is magnified upon passage to the underlying
bicategory. Using the terminology of \cite{patterson2024}, products in a double
category tend to be at most \emph{iso-strong}, meaning essentially that parallel
products of proarrows commute with external composition, as in a cartesian
double category. By contrast, coproducts in a well-behaved double category are
\emph{strong}, meaning that arbitrary span-indexed coproducts of proarrows
commute with external composition. That is enough to imply that the coprojection
and codiagonal transformations each have companions and conjoints that are
\emph{pseudo} protransformations, hence that the double adjunction defining the
coproducts transposes to give simultaneous bicategorical products and coproducts
(\cref{thm:direct-sums}), also known as \emph{direct sums}
\cite{lack2010bicategories}. For instance, the bicategories of relations and of
spans inherit their direct sums, based on set-theoretic coproducts, in this way.

Our outlook is that double categories offer a unifying language for
two-dimensional category theory. Recent works have shown that two-dimensional
products and coproducts, when defined using double adjunctions, attain universal
properties that are simple to state and use \cite{aleiferi2018,patterson2024}.
In this work, we have shown that the bicategorical formulations, found in
cartesian bicategories and bicategories with direct sums, are obtained by
transposition. But even this bicategorical structure is better viewed as
double-categorical because the oplax or lax protransformations furnishing it
have universal properties as companions or conjoints---universal properties that
become invisible upon passing to the underlying oplax or lax natural
transformations. In the future, one might hope to simplify the construction of
other complicated bicategorical structures using similar techniques.

\paragraph{Outline.} The plan of the paper is as follows. We begin in
\cref{sec:protransformations} by defining lax, oplax, and pseudo
protransformations between lax double functors and assembling them into the
proarrows of double categories (\cref{sec:protransformation-double-cat}). Within
these double categories, we recognize certain oplax or lax protransformations as
companions or conjoints of natural transformations
(\cref{sec:companion-transformations}), thus assigning a universal property to a
procedure for transposing natural transformations. In
\cref{sec:double-to-bicat}, we observe that transposing natural transformations
is pseudofunctorial and also yields purely bicategorical results as a corollary
(\cref{sec:transposing-transformations}). We then upgrade our method to
transpose a double adjunction into a kind of lax adjunction
(\cref{sec:transposing-adjunctions}). Finally, in
\cref{sec:double-to-cartesian-bicat}, we apply these results, first, to double
categories with finite products (\cref{sec:structure-proarrows}), exhibiting
their underlying bicategories as cartesian
(\cref{sec:transposing-cartesian-structure}), and, dually, to double categories
with finite coproducts (\cref{sec:transposing-cocartesian-structure}), under
stronger assumptions exhibiting underlying bicategories with direct sums.
Throughout the paper, we draw on the theory of companions and conjoints in a
double category. This theory is carefully reviewed in \cref{sec:companions},
including a sequence of lemmas on reshaping cells that have often been left
implicit in the literature (\cref{sec:sliding}) and a way of formulating
companions as a biadjunction (\cref{sec:companions-biadjoint}).

\paragraph{Conventions.} Categories $\cat{C}, \cat{D}, \dots$ are written
in sans-serif font, 2-categories and bicategories $\bicat{B}, \bicat{C}, \dots$
in bold font, and double categories $\dbl{D}, \dbl{E}, \dots$ in blackboard bold
font. The composite of morphisms $x \xto{f} y \xto{g} z$ in a category is
written variously in applicative order as $g \circ f$ or in diagrammatic order as
$f \cdot g$. The composite of proarrows $x \xproto{m} y \xproto{n} z$ in a double
category is always written in diagrammatic order as $m \odot n$.

Double categories and double functors are assumed to be \emph{pseudo} unless
otherwise stated. However, to avoid cumbersome notation, associators and unitors
are elided in pasting diagrams when it is clear from typing constraints where
they must be inserted. The \emph{``internal''} category structure of a double
category $\dbl{D}$ refers to the underlying categories $\dbl{D}_0$ and
$\dbl{D}_1$, whereas the \emph{``external''} category structure refers to the
pseudocategory with composition
$\odot: \dbl{D}_1 \times_{\dbl{D}_0} \dbl{D}_1 \to \dbl{D}_1$ and identities
$\id: \dbl{D}_0 \to \dbl{D}_1$.

\paragraph{Acknowledgments.} I am grateful to Nathanael Arkor for insightful
comments on the first draft of this paper, which helped to streamline the
presentation in \cref{sec:companions-biadjoint} and to construct direct sums in
\cref{sec:transposing-cocartesian-structure}, among other improvements.

\section{Natural transformations and protransformations}
\label{sec:protransformations}

A double category can be defined as a pseudocategory internal to $\Cat$, the
2-category of categories. Likewise, a lax, colax, or pseudo double functor is
respectively a lax, colax, or pseudo functor internal to $\Cat$. Having adopted
this perspective, one must sooner or later also consider natural transformations
internal to $\Cat$. These are not the usual (tight) natural transformations
between double functors but a transposed notion, whose components on objects are
proarrows rather than arrows. We will call them ``protransformations;'' another
plausible name is ``loose transformations.''

\subsection{Double categories of protransformations}
\label{sec:protransformation-double-cat}

In this section, we define lax protransformations between lax double functors
and show how they assemble into the proarrows of a double category of lax
functors. Although these notions are straightforward laxifications of pseudo
notions long present in the literature on double categories
\cite{grandis1999,grandis2019}, it seems worthwhile to give a detailed account
since they are central to the present paper and we will need to reference the
various axioms and operations.

\begin{definition}[Protransformation] \label{def:lax-protransformation}
  Let $F,G: \dbl{D} \to \dbl{E}$ be lax functors between double categories. A
  \define{lax}, \define{oplax}, or \define{pseudo protransformation}
  $\tau: F \proTo G$ is respectively a lax, oplax, or pseudo natural
  transformation, internal to $\Cat$, from $F$ to $G$.
\end{definition}

We will immediately unwind this conceptual definition, but for the general
definitions of pseudocategories, pseudofunctors, and pseudo natural
transformations, see \cite{martins-ferreira2006}.

Equivalently, a \define{lax protransformation} $\tau: F \proTo G$ is seen to
consist of
\begin{itemize}
  \item for each object $x \in \dbl{D}$, a proarrow $\tau_x: Fx \proto Gx$ in
    $\dbl{E}$, the \define{component} of $\tau$ at $x$;
  \item for each arrow $f: x \to y$ in $\dbl{D}$, a cell $\tau_f$ in $\dbl{E}$
    of the form
    \begin{equation*}
      \begin{tikzcd}
        Fx & Gx \\
        Fy & Gy
        \arrow["Ff"', from=1-1, to=2-1]
        \arrow[""{name=0, anchor=center, inner sep=0}, "{\tau_x}", "\shortmid"{marking}, from=1-1, to=1-2]
        \arrow["Gf", from=1-2, to=2-2]
        \arrow[""{name=1, anchor=center, inner sep=0}, "{\tau_y}"', "\shortmid"{marking}, from=2-1, to=2-2]
        \arrow["{\tau_f}"{description}, draw=none, from=0, to=1]
      \end{tikzcd},
    \end{equation*}
    called the \define{component} of $\tau$ at $f$;
  \item for each proarrow $m: x \proto y$ in $\dbl{D}$, a globular cell $\tau_m$
    in $\dbl{E}$ of the form
    \begin{equation*}
      \begin{tikzcd}
        Fx & Gx & Gy \\
        Fx & Fy & Gy
        \arrow["Fm"', "\shortmid"{marking}, from=2-1, to=2-2]
        \arrow["{\tau_y}"', "\shortmid"{marking}, from=2-2, to=2-3]
        \arrow["{\tau_x}", "\shortmid"{marking}, from=1-1, to=1-2]
        \arrow["Gm", "\shortmid"{marking}, from=1-2, to=1-3]
        \arrow[Rightarrow, no head, from=1-1, to=2-1]
        \arrow[Rightarrow, no head, from=1-3, to=2-3]
        \arrow["{\tau_m}"{description}, draw=none, from=1-2, to=2-2]
      \end{tikzcd},
    \end{equation*}
    called the \define{naturality comparison} of $\tau$ at $m$.
\end{itemize}
The following axioms must be satisfied.
\begin{itemize}
  \item Functoriality of components: $\tau_{f \cdot g} = \tau_f \cdot \tau_g$ for
    all arrows $x \xto{f} y\xto{g} z$ in $\dbl{D}$, and also
    $\tau_{1_x} = 1_{\tau_x}$ for all objects $x$ in $\dbl{D}$.
  \item Naturality with respect to cells: for every cell
    $\stdInlineCell{\alpha}$ in $\dbl{D}$,
    \begin{equation*}
      \begin{tikzcd}
        Fx & Gx & Gy \\
        Fx & Fy & Gy \\
        Fw & Fz & Gz
        \arrow[""{name=0, anchor=center, inner sep=0}, "Fm", "\shortmid"{marking}, from=2-1, to=2-2]
        \arrow[""{name=1, anchor=center, inner sep=0}, "{\tau_y}", "\shortmid"{marking}, from=2-2, to=2-3]
        \arrow["{\tau_x}", "\shortmid"{marking}, from=1-1, to=1-2]
        \arrow["Gm", "\shortmid"{marking}, from=1-2, to=1-3]
        \arrow[Rightarrow, no head, from=1-1, to=2-1]
        \arrow[Rightarrow, no head, from=1-3, to=2-3]
        \arrow["{\tau_m}"{description}, draw=none, from=1-2, to=2-2]
        \arrow["Ff"', from=2-1, to=3-1]
        \arrow["Fg"{description}, from=2-2, to=3-2]
        \arrow[""{name=2, anchor=center, inner sep=0}, "Fn"', "\shortmid"{marking}, from=3-1, to=3-2]
        \arrow["Gg", from=2-3, to=3-3]
        \arrow[""{name=3, anchor=center, inner sep=0}, "{\tau_z}"', "\shortmid"{marking}, from=3-2, to=3-3]
        \arrow["F\alpha"{description}, draw=none, from=0, to=2]
        \arrow["{\tau_g}"{description}, draw=none, from=1, to=3]
      \end{tikzcd}
      \quad=\quad
      \begin{tikzcd}
        Fx & Gx & Gy \\
        Fw & Gw & Gz \\
        Fw & Fz & Gz
        \arrow["Ff"', from=1-1, to=2-1]
        \arrow[""{name=0, anchor=center, inner sep=0}, "{\tau_x}", "\shortmid"{marking}, from=1-1, to=1-2]
        \arrow[""{name=1, anchor=center, inner sep=0}, "Gm", "\shortmid"{marking}, from=1-2, to=1-3]
        \arrow["Gf"{description}, from=1-2, to=2-2]
        \arrow[""{name=2, anchor=center, inner sep=0}, "{\tau_w}"', "\shortmid"{marking}, from=2-1, to=2-2]
        \arrow[Rightarrow, no head, from=2-1, to=3-1]
        \arrow["Gg", from=1-3, to=2-3]
        \arrow[""{name=3, anchor=center, inner sep=0}, "Gn"', "\shortmid"{marking}, from=2-2, to=2-3]
        \arrow["Fn"', "\shortmid"{marking}, from=3-1, to=3-2]
        \arrow[Rightarrow, no head, from=2-3, to=3-3]
        \arrow["{\tau_z}"', "\shortmid"{marking}, from=3-2, to=3-3]
        \arrow["{\tau_n}"{description}, draw=none, from=2-2, to=3-2]
        \arrow["{\tau_f}"{description}, draw=none, from=0, to=2]
        \arrow["G\alpha"{description}, draw=none, from=1, to=3]
      \end{tikzcd}.
    \end{equation*}
  \item Coherence with respect to external composition: for every pair of
    composable proarrows $m: x \proto y$ and $n: y \proto z$ in $\dbl{D}$,
    \begin{equation*}
      \begin{tikzcd}[row sep=scriptsize]
        Fx & Gx & Gy & Gz \\
        Fx & Fy & Gy & Gz \\
        Fx & Fy & Fz & Gz \\
        Fx && Fz & Fz
        \arrow[""{name=0, anchor=center, inner sep=0}, "Fm"', "\shortmid"{marking}, from=2-1, to=2-2]
        \arrow["{\tau_y}"', "\shortmid"{marking}, from=2-2, to=2-3]
        \arrow["{\tau_x}", "\shortmid"{marking}, from=1-1, to=1-2]
        \arrow["Gm", "\shortmid"{marking}, from=1-2, to=1-3]
        \arrow[Rightarrow, no head, from=1-1, to=2-1]
        \arrow[Rightarrow, no head, from=1-3, to=2-3]
        \arrow["{\tau_m}"{description}, draw=none, from=1-2, to=2-2]
        \arrow[""{name=1, anchor=center, inner sep=0}, "Gn"', "\shortmid"{marking}, from=2-3, to=2-4]
        \arrow[""{name=2, anchor=center, inner sep=0}, "Gn", "\shortmid"{marking}, from=1-3, to=1-4]
        \arrow[Rightarrow, no head, from=1-4, to=2-4]
        \arrow["Fn"', "\shortmid"{marking}, from=3-2, to=3-3]
        \arrow[""{name=3, anchor=center, inner sep=0}, "Fm"', "\shortmid"{marking}, from=3-1, to=3-2]
        \arrow[Rightarrow, no head, from=2-2, to=3-2]
        \arrow[""{name=4, anchor=center, inner sep=0}, "{\tau_z}"', "\shortmid"{marking}, from=3-3, to=3-4]
        \arrow[Rightarrow, no head, from=2-4, to=3-4]
        \arrow["{\tau_n}"{description}, draw=none, from=2-3, to=3-3]
        \arrow[Rightarrow, no head, from=2-1, to=3-1]
        \arrow[""{name=5, anchor=center, inner sep=0}, "{F(m \odot n)}"', "\shortmid"{marking}, from=4-1, to=4-3]
        \arrow[Rightarrow, no head, from=3-1, to=4-1]
        \arrow[Rightarrow, no head, from=3-3, to=4-3]
        \arrow[Rightarrow, no head, from=3-4, to=4-4]
        \arrow[""{name=6, anchor=center, inner sep=0}, "{\tau_z}"', "\shortmid"{marking}, from=4-3, to=4-4]
        \arrow["{1_{Gn}}"{description}, draw=none, from=2, to=1]
        \arrow["{1_{Fm}}"{description}, draw=none, from=0, to=3]
        \arrow["{F_{m,n}}"{description}, draw=none, from=3-2, to=5]
        \arrow["{1_{\tau_z}}"{description}, draw=none, from=4, to=6]
      \end{tikzcd}
      \quad=\quad
      \begin{tikzcd}[row sep=scriptsize]
        Fx & Gx & Gy & Gz \\
        Fx & Gx && Gz \\
        Fx && Fz & Gz
        \arrow[""{name=0, anchor=center, inner sep=0}, "{G(m \odot n)}"', "\shortmid"{marking}, from=2-2, to=2-4]
        \arrow["Gm", "\shortmid"{marking}, from=1-2, to=1-3]
        \arrow["Gn", "\shortmid"{marking}, from=1-3, to=1-4]
        \arrow[Rightarrow, no head, from=1-4, to=2-4]
        \arrow[Rightarrow, no head, from=1-2, to=2-2]
        \arrow[""{name=1, anchor=center, inner sep=0}, "{\tau_x}", "\shortmid"{marking}, from=1-1, to=1-2]
        \arrow[Rightarrow, no head, from=1-1, to=2-1]
        \arrow[""{name=2, anchor=center, inner sep=0}, "{\tau_x}"', "\shortmid"{marking}, from=2-1, to=2-2]
        \arrow["{\tau_z}"', "\shortmid"{marking}, from=3-3, to=3-4]
        \arrow["{F(m \odot n)}"', "\shortmid"{marking}, from=3-1, to=3-3]
        \arrow[""{name=3, anchor=center, inner sep=0}, Rightarrow, no head, from=2-4, to=3-4]
        \arrow[""{name=4, anchor=center, inner sep=0}, Rightarrow, no head, from=2-1, to=3-1]
        \arrow["{1_{\tau_x}}"{description}, draw=none, from=1, to=2]
        \arrow["{\tau_{m \odot n}}"{description}, draw=none, from=4, to=3]
        \arrow["{G_{m,n}}"{description, pos=0.4}, draw=none, from=1-3, to=0]
      \end{tikzcd}.
    \end{equation*}
  \item Coherence with respect to external identities: for every object $x$ in
    $\dbl{D}$,
    \begin{equation*}
      \begin{tikzcd}[row sep=scriptsize]
        Fx & Gx & Gx \\
        Fx & Gx & Gx \\
        Fx & Fx & Gx
        \arrow[""{name=0, anchor=center, inner sep=0}, "{G \mathrm{id}_x}"', "\shortmid"{marking}, from=2-2, to=2-3]
        \arrow[""{name=1, anchor=center, inner sep=0}, "{\tau_x}"', "\shortmid"{marking}, from=2-1, to=2-2]
        \arrow["{F \mathrm{id}_x}"', "\shortmid"{marking}, from=3-1, to=3-2]
        \arrow["{\tau_x}"', "\shortmid"{marking}, from=3-2, to=3-3]
        \arrow[Rightarrow, no head, from=2-1, to=3-1]
        \arrow[Rightarrow, no head, from=2-3, to=3-3]
        \arrow["{\tau_{\mathrm{id}_x}}"{description}, draw=none, from=2-2, to=3-2]
        \arrow[""{name=2, anchor=center, inner sep=0}, "{\mathrm{id}_{Gx}}", "\shortmid"{marking}, from=1-2, to=1-3]
        \arrow[Rightarrow, no head, from=1-2, to=2-2]
        \arrow[Rightarrow, no head, from=1-3, to=2-3]
        \arrow[""{name=3, anchor=center, inner sep=0}, "{\tau_x}", "\shortmid"{marking}, from=1-1, to=1-2]
        \arrow[Rightarrow, no head, from=1-1, to=2-1]
        \arrow["{G_x}"{description}, draw=none, from=2, to=0]
        \arrow["{1_{\tau_x}}"{description}, draw=none, from=3, to=1]
      \end{tikzcd}
      \quad=\quad
      \begin{tikzcd}[row sep=scriptsize]
        Fx & Fx & Gx \\
        Fx & Fx & Gx
        \arrow[""{name=0, anchor=center, inner sep=0}, "{F \mathrm{id}_x}"', "\shortmid"{marking}, from=2-1, to=2-2]
        \arrow[""{name=1, anchor=center, inner sep=0}, "{\tau_x}"', "\shortmid"{marking}, from=2-2, to=2-3]
        \arrow[Rightarrow, no head, from=1-1, to=2-1]
        \arrow[Rightarrow, no head, from=1-2, to=2-2]
        \arrow[Rightarrow, no head, from=1-3, to=2-3]
        \arrow[""{name=2, anchor=center, inner sep=0}, "{\mathrm{id}_{Fx}}", "\shortmid"{marking}, from=1-1, to=1-2]
        \arrow[""{name=3, anchor=center, inner sep=0}, "{\tau_x}", "\shortmid"{marking}, from=1-2, to=1-3]
        \arrow["{F_x}"{description}, draw=none, from=2, to=0]
        \arrow["{1_{\tau_x}}"{description}, draw=none, from=3, to=1]
      \end{tikzcd}.
    \end{equation*}
\end{itemize}
A \define{oplax} protransformation $\tau: F \proTo G$ is defined similarly,
except that the orientation of the naturality comparisons is reversed, giving
them the form
\begin{equation*}
  \begin{tikzcd}
    Fx & Fy & Gy \\
    Fx & Gx & Gy
    \arrow["Fm", "\shortmid"{marking}, from=1-1, to=1-2]
    \arrow["{\tau_y}", "\shortmid"{marking}, from=1-2, to=1-3]
    \arrow["{\tau_x}"', "\shortmid"{marking}, from=2-1, to=2-2]
    \arrow["Gm"', "\shortmid"{marking}, from=2-2, to=2-3]
    \arrow[Rightarrow, no head, from=1-1, to=2-1]
    \arrow[Rightarrow, no head, from=1-3, to=2-3]
    \arrow["{\tau_m}"{description}, draw=none, from=1-2, to=2-2]
  \end{tikzcd}
\end{equation*}
for each proarrow $m: x \proto y$ and changing the above axioms accordingly.
Finally, a lax or oplax protransformation $\tau$ is \define{pseudo} if every
comparison cell $\tau_m$ is invertible, and is \define{strict} if every comparison
is an identity.

Unless otherwise stated, protransformations, like double categories and double
functors, \emph{are assumed to be pseudo}. Pseudo protransformations were first
defined, under a different name, by Grandis and Paré; see
\cite[\S{7.4}]{grandis1999} or \cite[\mbox{Definition 3.8.2}]{grandis2019}.

Double functors between a fixed pair of double categories, and
protransformations of those, generally do not form a category. Since their
components are proarrows, protransformations have a composition that is
associative and unital only up to isomorphism. But protransformations are the
proarrows of a double category. The arrows of this double category are natural
transformations and the cells are \emph{modifications}.

\begin{definition}[Modification] \label{def:modification}
  Let $F,G,H,K: \dbl{D} \to \dbl{E}$ be lax functors between double categories.
  A \define{modification}
  \begin{equation*}
    \begin{tikzcd}
      F & G \\
      H & K
      \arrow["\alpha"', from=1-1, to=2-1]
      \arrow["\beta", from=1-2, to=2-2]
      \arrow[""{name=0, anchor=center, inner sep=0}, "\sigma", "\shortmid"{marking}, from=1-1, to=1-2]
      \arrow[""{name=1, anchor=center, inner sep=0}, "\tau"', "\shortmid"{marking}, from=2-1, to=2-2]
      \arrow["\mu"{description}, draw=none, from=0, to=1]
    \end{tikzcd}
  \end{equation*}
  bounded by lax protransformations $\sigma: F \proTo G$ and $\tau: H \proTo K$ and
  natural transformations $\alpha: F \To H$ and $\beta: G \To K$ consists of, for each
  object $x \in \dbl{D}$, a cell in $\dbl{E}$ of
  the form
  \begin{equation*}
    \begin{tikzcd}
      Fx & Gx \\
      Hx & Kx
      \arrow["{\alpha_x}"', from=1-1, to=2-1]
      \arrow["{\beta_x}", from=1-2, to=2-2]
      \arrow[""{name=0, anchor=center, inner sep=0}, "{\sigma_x}", "\shortmid"{marking}, from=1-1, to=1-2]
      \arrow[""{name=1, anchor=center, inner sep=0}, "{\tau_x}"', "\shortmid"{marking}, from=2-1, to=2-2]
      \arrow["{\mu_x}"{description}, draw=none, from=0, to=1]
    \end{tikzcd},
  \end{equation*}
  the \define{component} of $\mu$ at $x$. Two axioms must be satisfied.
  \begin{itemize}
    \item Internal equivariance: for every arrow $f: x \to y$ in $\dbl{D}$,
      \begin{equation*}
        \begin{tikzcd}
          Fx & Gx \\
          Hx & Kx \\
          Hy & Kf
          \arrow["{\alpha_x}"', from=1-1, to=2-1]
          \arrow["{\beta_x}", from=1-2, to=2-2]
          \arrow[""{name=0, anchor=center, inner sep=0}, "{\sigma_x}", "\shortmid"{marking}, from=1-1, to=1-2]
          \arrow[""{name=1, anchor=center, inner sep=0}, "{\tau_x}", "\shortmid"{marking}, from=2-1, to=2-2]
          \arrow["Hf"', from=2-1, to=3-1]
          \arrow["Kf", from=2-2, to=3-2]
          \arrow[""{name=2, anchor=center, inner sep=0}, "{\tau_y}"', "\shortmid"{marking}, from=3-1, to=3-2]
          \arrow["{\mu_x}"{description, pos=0.4}, draw=none, from=0, to=1]
          \arrow["{\tau_f}"{description}, draw=none, from=1, to=2]
        \end{tikzcd}
        \quad=\quad
        \begin{tikzcd}
          Fx & Gx \\
          Fy & Gy \\
          Hy & Ky
          \arrow["{\alpha_y}"', from=2-1, to=3-1]
          \arrow["{\beta_y}", from=2-2, to=3-2]
          \arrow[""{name=0, anchor=center, inner sep=0}, "{\sigma_y}", "\shortmid"{marking}, from=2-1, to=2-2]
          \arrow[""{name=1, anchor=center, inner sep=0}, "{\tau_y}"', "\shortmid"{marking}, from=3-1, to=3-2]
          \arrow["Ff"', from=1-1, to=2-1]
          \arrow["Gf", from=1-2, to=2-2]
          \arrow[""{name=2, anchor=center, inner sep=0}, "{\sigma_x}", "\shortmid"{marking}, from=1-1, to=1-2]
          \arrow["{\mu_y}"{description}, draw=none, from=0, to=1]
          \arrow["{\sigma_f}"{description, pos=0.4}, draw=none, from=2, to=0]
        \end{tikzcd}.
      \end{equation*}
    \item External equivariance: for every proarrow $m: x \proto y$ in $\dbl{D}$,
      \begin{equation*}
        \begin{tikzcd}
          Fx & Gx & Gy \\
          Hx & Kx & Ky \\
          Hx & Hy & Ky
          \arrow["{\alpha_x}"', from=1-1, to=2-1]
          \arrow["{\beta_x}"{description}, from=1-2, to=2-2]
          \arrow[""{name=0, anchor=center, inner sep=0}, "{\sigma_x}", "\shortmid"{marking}, from=1-1, to=1-2]
          \arrow[""{name=1, anchor=center, inner sep=0}, "{\tau_x}"', "\shortmid"{marking}, from=2-1, to=2-2]
          \arrow[""{name=2, anchor=center, inner sep=0}, "Km"', "\shortmid"{marking}, from=2-2, to=2-3]
          \arrow["Hm"', "\shortmid"{marking}, from=3-1, to=3-2]
          \arrow["{\tau_y}"', "\shortmid"{marking}, from=3-2, to=3-3]
          \arrow[""{name=3, anchor=center, inner sep=0}, "Gm", "\shortmid"{marking}, from=1-2, to=1-3]
          \arrow["{\beta_y}", from=1-3, to=2-3]
          \arrow[Rightarrow, no head, from=2-1, to=3-1]
          \arrow[Rightarrow, no head, from=2-3, to=3-3]
          \arrow["{\tau_m}"{description}, draw=none, from=2-2, to=3-2]
          \arrow["{\mu_x}"{description}, draw=none, from=0, to=1]
          \arrow["{\beta_m}"{description}, draw=none, from=3, to=2]
        \end{tikzcd}
        \quad=\quad
        \begin{tikzcd}
          Fx & Gx & Gy \\
          Fx & Fy & Gy \\
          Hx & Hy & Ky
          \arrow["{\sigma_x}", "\shortmid"{marking}, from=1-1, to=1-2]
          \arrow["Gm", "\shortmid"{marking}, from=1-2, to=1-3]
          \arrow[Rightarrow, no head, from=1-1, to=2-1]
          \arrow[Rightarrow, no head, from=1-3, to=2-3]
          \arrow[""{name=0, anchor=center, inner sep=0}, "Fm", "\shortmid"{marking}, from=2-1, to=2-2]
          \arrow[""{name=1, anchor=center, inner sep=0}, "{\sigma_y}", "\shortmid"{marking}, from=2-2, to=2-3]
          \arrow["{\sigma_m}"{description}, draw=none, from=1-2, to=2-2]
          \arrow["{\alpha_x}"', from=2-1, to=3-1]
          \arrow["{\alpha_y}"{description}, from=2-2, to=3-2]
          \arrow[""{name=2, anchor=center, inner sep=0}, "Hm"', "\shortmid"{marking}, from=3-1, to=3-2]
          \arrow[""{name=3, anchor=center, inner sep=0}, "{\tau_y}"', "\shortmid"{marking}, from=3-2, to=3-3]
          \arrow["{\beta_y}", from=2-3, to=3-3]
          \arrow["{\alpha_m}"{description}, draw=none, from=0, to=2]
          \arrow["{\mu_y}"{description}, draw=none, from=1, to=3]
        \end{tikzcd}.
      \end{equation*}
  \end{itemize}
  Modifications between oplax protransformations are defined in nearly the same
  way, adjusting only the second axiom. Finally, modifications between pseudo
  protransformations are equivalently defined by viewing them as modifications
  between either lax or oplax transformations.
\end{definition}

Modifications bounded by two natural transformations, optionally allowed to be
pseudo, and by two pseudo protransformations were first defined by Grandis and
Paré; see \cite[\S{7.4}]{grandis1999} or \cite[\mbox{Definition
  3.8.3}]{grandis2019}. We prefer natural transformations to be strict, since
they belong to the strict direction of the double categories, but we allow
protransformations to be lax or oplax, since even strict natural transformations
give rise to lax or oplax protransformations, as we will see.

Lax double functors, natural transformations, protransformations, and
modifications form a double category, as asserted in \cite[\S{7.4}]{grandis1999}
and \cite[\mbox{Theorem 3.8.4}]{grandis2019}. The same is true if the
protransformations are allowed to be either lax or oplax. For ease of reference,
we state the construction in all three cases.

\begin{construction}[Double categories of protransformations]
  Let $\dbl{D}$ and $\dbl{E}$ be double categories. Then there are double
  categories
  \begin{equation*}
    \LaxLax(\dbl{D},\dbl{E}), \qquad
    \LaxOpl(\dbl{D},\dbl{E}), \qquad\text{and}\qquad
    \LaxPs(\dbl{D},\dbl{E})
  \end{equation*}
  whose
  \begin{itemize}[noitemsep]
    \item objects are lax double functors $\dbl{D} \to \dbl{E}$;
    \item arrows are natural transformations;
    \item proarrows are lax, oplax, or pseudo protransformations, respectively;
    \item cells are modifications.
  \end{itemize}
  In all three double categories, the composite $\sigma \odot \tau: F \proTo H$
  of protransformations $\sigma: F \proTo G$ and $\tau: G \proTo H$ is defined
  componentwise in $\dbl{E}$, so that
  $(\sigma \odot \tau)_x \coloneqq \sigma_x \odot \tau_x$ for each object
  $x \in \dbl{D}$ and
  \begin{equation*}
    \begin{tikzcd}
      Fx & Hx \\
      Fy & Hy
      \arrow["Ff"', from=1-1, to=2-1]
      \arrow["Hf", from=1-2, to=2-2]
      \arrow[""{name=0, anchor=center, inner sep=0}, "{(\sigma \odot \tau)_x}", "\shortmid"{marking}, from=1-1, to=1-2]
      \arrow[""{name=1, anchor=center, inner sep=0}, "{(\sigma \odot \tau)_y}"', "\shortmid"{marking}, from=2-1, to=2-2]
      \arrow["{(\sigma \odot \tau)_f}"{description}, draw=none, from=0, to=1]
    \end{tikzcd}
    \quad\coloneqq\quad
    \begin{tikzcd}
      Fx & Gx & Hx \\
      Fy & Gy & Hy
      \arrow[""{name=0, anchor=center, inner sep=0}, "{\sigma_x}", "\shortmid"{marking}, from=1-1, to=1-2]
      \arrow[""{name=1, anchor=center, inner sep=0}, "{\tau_x}", "\shortmid"{marking}, from=1-2, to=1-3]
      \arrow["Ff"', from=1-1, to=2-1]
      \arrow["Gf"{description}, from=1-2, to=2-2]
      \arrow["Hf", from=1-3, to=2-3]
      \arrow[""{name=2, anchor=center, inner sep=0}, "{\sigma_y}"', "\shortmid"{marking}, from=2-1, to=2-2]
      \arrow[""{name=3, anchor=center, inner sep=0}, "{\tau_y}"', "\shortmid"{marking}, from=2-2, to=2-3]
      \arrow["{\sigma_f}"{description}, draw=none, from=0, to=2]
      \arrow["{\tau_f}"{description}, draw=none, from=1, to=3]
    \end{tikzcd}
  \end{equation*}
  for each arrow $f: x \to y$ in $\dbl{D}$, and (in the lax case) is defined on
  naturality comparisons by
  \begin{equation*}
    \begin{tikzcd}
      Fx & Hx & Hy \\
      Fx & Fy & Hy
      \arrow["Hm", "\shortmid"{marking}, from=1-2, to=1-3]
      \arrow["Fm"', "\shortmid"{marking}, from=2-1, to=2-2]
      \arrow["{(\sigma \odot \tau)_x}", "\shortmid"{marking}, from=1-1, to=1-2]
      \arrow["{(\sigma \odot \tau)_y}"', "\shortmid"{marking}, from=2-2, to=2-3]
      \arrow[Rightarrow, no head, from=1-1, to=2-1]
      \arrow[Rightarrow, no head, from=1-3, to=2-3]
      \arrow["{(\sigma \odot \tau)_m}"{description}, draw=none, from=1-2, to=2-2]
    \end{tikzcd}
    \quad\coloneqq\quad
    \begin{tikzcd}[row sep=scriptsize]
      Fx & Gx & Hx & Hy \\
      Fx & Gx & Gy & Hy \\
      Fx & Fy & Gy & Hy
      \arrow[""{name=0, anchor=center, inner sep=0}, "{\sigma_x}"', "\shortmid"{marking}, from=2-1, to=2-2]
      \arrow["Gm", "\shortmid"{marking}, from=2-2, to=2-3]
      \arrow["{\tau_x}", "\shortmid"{marking}, from=1-2, to=1-3]
      \arrow["Hm", "\shortmid"{marking}, from=1-3, to=1-4]
      \arrow[""{name=1, anchor=center, inner sep=0}, "{\tau_z}", "\shortmid"{marking}, from=2-3, to=2-4]
      \arrow[Rightarrow, no head, from=1-2, to=2-2]
      \arrow[Rightarrow, no head, from=1-1, to=2-1]
      \arrow["Fm"', "\shortmid"{marking}, from=3-1, to=3-2]
      \arrow[""{name=2, anchor=center, inner sep=0}, "{\sigma_x}", "\shortmid"{marking}, from=1-1, to=1-2]
      \arrow["{\sigma_y}"', "\shortmid"{marking}, from=3-2, to=3-3]
      \arrow[""{name=3, anchor=center, inner sep=0}, "{\tau_z}"', "\shortmid"{marking}, from=3-3, to=3-4]
      \arrow[Rightarrow, no head, from=1-4, to=2-4]
      \arrow[Rightarrow, no head, from=2-3, to=3-3]
      \arrow[Rightarrow, no head, from=2-4, to=3-4]
      \arrow[Rightarrow, no head, from=2-1, to=3-1]
      \arrow["{\tau_m}"{description}, draw=none, from=1-3, to=2-3]
      \arrow["{\sigma_m}"{description}, draw=none, from=2-2, to=3-2]
      \arrow["{1_{\sigma_x}}"{description}, draw=none, from=2, to=0]
      \arrow["{1_{\tau_z}}"{description}, draw=none, from=1, to=3]
    \end{tikzcd}
  \end{equation*}
  for each proarrow $m: x \proto y$ in $\dbl{D}$. The identity protransformation
  $\id_F$ has components $(\id_F)_x \coloneqq \id_{Fx}$ and
  $(\id_F)_f \coloneqq \id_{Ff}$ and its (invertible) naturality comparisons are
  given by unitors in $\dbl{E}$. Modifications compose componentwise in
  $\dbl{E}$, in both directions. Finally, the associator and unitor
  modifications are also defined componentwise by associators and unitors in
  $\dbl{E}$.

  Lax and oplax protransformations are dual in that there is an isomorphism of
  double categories
  \begin{equation*}
    \LaxOpl(\dbl{D}, \dbl{E})^\rev \cong \LaxLax(\dbl{D}^\rev, \dbl{E}^\rev),
  \end{equation*}
  where $(-)^\rev$ is the reversal duality from \cref{rem:duality}.
\end{construction}

Protransformations, like natural transformations, can be pre-whiskered with lax
functors. While that might seem unsurprising, it is worth examining with some
care since protransformations generally \emph{cannot} be post-whiskered with lax
functors, only with pseudo ones.

\begin{construction}[Pre-whiskering]
  Let $\tau: G \proTo H: \dbl{D} \to \dbl{E}$ be a protransformation, possibly lax or
  oplax, between lax double functors. The \define{pre-whiskering} of $\tau$ with
  another lax functor $F: \dbl{C} \to \dbl{D}$ is a protransformation
  \begin{equation*}
    \tau*F: G \circ F \proTo H \circ F: \dbl{C} \to \dbl{E}
  \end{equation*}
  of the same kind, defined to have components $(\tau * F)_x \coloneq \tau_{Fx}$ and
  $(\tau * F)_f \coloneqq \tau_{Ff}$ at each object $x$ and arrow $f$ in $\dbl{C}$ and
  naturality comparisons $(\tau * F)_m \coloneqq \tau_{Fm}$ for each proarrow $m$ in
  $\dbl{C}$.

  Similarly, given lax double functors $G,H,K,L: \dbl{D} \to \dbl{E}$, a
  modification as on the left
  \begin{equation*}
    \begin{tikzcd}
      G & H \\
      K & L
      \arrow["\alpha"', from=1-1, to=2-1]
      \arrow["\beta", from=1-2, to=2-2]
      \arrow[""{name=0, anchor=center, inner sep=0}, "\sigma", "\shortmid"{marking}, from=1-1, to=1-2]
      \arrow[""{name=1, anchor=center, inner sep=0}, "\tau"', "\shortmid"{marking}, from=2-1, to=2-2]
      \arrow["\mu"{description}, draw=none, from=0, to=1]
    \end{tikzcd}
    \qquad\leadsto\qquad
    \begin{tikzcd}
      {G \circ F} & {H \circ F} \\
      {K \circ F} & {L \circ F}
      \arrow["{\alpha*F}"', from=1-1, to=2-1]
      \arrow["{\beta*F}", from=1-2, to=2-2]
      \arrow[""{name=0, anchor=center, inner sep=0}, "{\sigma*F}", "\shortmid"{marking}, from=1-1, to=1-2]
      \arrow[""{name=1, anchor=center, inner sep=0}, "{\tau*F}"', "\shortmid"{marking}, from=2-1, to=2-2]
      \arrow["{\mu*F}"{description}, draw=none, from=0, to=1]
    \end{tikzcd}
  \end{equation*}
  has a \define{pre-whiskering} with a lax functor $F: \dbl{C} \to \dbl{D}$, the
  modification on the right defined to have components
  $(\mu * F)_x \coloneqq \mu_{Fx}$ at each object $x \in \dbl{C}$.
\end{construction}
\begin{proof}[Proof of well-definedness]
  We need to check that pre-whiskerings of protransformations and modifications
  are well-defined. The proofs are essentially the same as in the bicategorical
  setting \cite[\mbox{Lemma 11.1.5}]{johnson2021} but we record them anyway.

  Given, say, a lax protransformation $\tau: G \proTo H$ between lax double
  functors $G, H: \dbl{D} \to \dbl{E}$, the pre-whiskering $\tau*F$ has functorial
  components $\tau_{Ff}$ by the functoriality of $F_0: \dbl{C}_0 \to \dbl{D}_0$, and
  the naturality of the pre-whiskering $\tau*F$ at a cell $\alpha$ in $\dbl{C}$
  follows immediately from the naturality of $\tau$ at the cell $F\alpha$ in $\dbl{D}$.
  The two coherence axioms are not quite as immediate. To prove coherence with
  respect to external composition, fix proarrows $x \xproto{m} y \xproto{n} z$
  in $\dbl{C}$ and calculate
  \begingroup
  \allowdisplaybreaks
  \begin{align*}
    \begin{tikzcd}[ampersand replacement=\&,sep=scriptsize]
      GFx \& HFx \& HFy \& HFz \\
      GFx \& GFy \& HFy \& HFz \\
      GFx \& GFy \& GFz \& HFz \\
      GFx \&\& GFz \\
      GFx \&\& GFz \& HFz
      \arrow[""{name=0, anchor=center, inner sep=0}, "GFm"', "\shortmid"{marking}, from=2-1, to=2-2]
      \arrow["{\tau_{Fy}}"', "\shortmid"{marking}, from=2-2, to=2-3]
      \arrow["{\tau_{Fx}}", "\shortmid"{marking}, from=1-1, to=1-2]
      \arrow["HFm", "\shortmid"{marking}, from=1-2, to=1-3]
      \arrow[Rightarrow, no head, from=1-1, to=2-1]
      \arrow[Rightarrow, no head, from=1-3, to=2-3]
      \arrow["{\tau_{Fm}}"{description}, draw=none, from=1-2, to=2-2]
      \arrow[""{name=1, anchor=center, inner sep=0}, "HFn"', "\shortmid"{marking}, from=2-3, to=2-4]
      \arrow[""{name=2, anchor=center, inner sep=0}, "HFn", "\shortmid"{marking}, from=1-3, to=1-4]
      \arrow[Rightarrow, no head, from=1-4, to=2-4]
      \arrow["GFn"', "\shortmid"{marking}, from=3-2, to=3-3]
      \arrow[""{name=3, anchor=center, inner sep=0}, "GFm"', "\shortmid"{marking}, from=3-1, to=3-2]
      \arrow[Rightarrow, no head, from=2-2, to=3-2]
      \arrow[""{name=4, anchor=center, inner sep=0}, "{\tau_{Fz}}"', "\shortmid"{marking}, from=3-3, to=3-4]
      \arrow[Rightarrow, no head, from=2-4, to=3-4]
      \arrow["{\tau_{Fn}}"{description}, draw=none, from=2-3, to=3-3]
      \arrow[Rightarrow, no head, from=2-1, to=3-1]
      \arrow[""{name=5, anchor=center, inner sep=0}, "{G(Fm \odot Fn)}"', "\shortmid"{marking}, from=4-1, to=4-3]
      \arrow[Rightarrow, no head, from=3-1, to=4-1]
      \arrow[Rightarrow, no head, from=3-3, to=4-3]
      \arrow[Rightarrow, no head, from=4-1, to=5-1]
      \arrow[Rightarrow, no head, from=4-3, to=5-3]
      \arrow[Rightarrow, no head, from=3-4, to=5-4]
      \arrow[""{name=6, anchor=center, inner sep=0}, "{\tau_{Fz}}"', "\shortmid"{marking}, from=5-3, to=5-4]
      \arrow[""{name=7, anchor=center, inner sep=0}, "{GF(m \odot n)}"', "\shortmid"{marking}, from=5-1, to=5-3]
      \arrow["1"{description}, draw=none, from=2, to=1]
      \arrow["1"{description}, draw=none, from=0, to=3]
      \arrow["{G_{Fm,Fn}}"{description}, draw=none, from=3-2, to=5]
      \arrow["1"{description}, draw=none, from=4, to=6]
      \arrow["{G(F_{m,n})}"{description, pos=0.6}, draw=none, from=5, to=7]
    \end{tikzcd}
    &=
    \begin{tikzcd}[ampersand replacement=\&,sep=scriptsize]
      GFx \& HFx \& HFy \& HFz \\
      GFx \& HFx \&\& HFz \\
      GFx \&\& GFz \& HFz \\
      GFx \&\& GFz \& HFz
      \arrow[""{name=0, anchor=center, inner sep=0}, "{\tau_{Fx}}", "\shortmid"{marking}, from=1-1, to=1-2]
      \arrow["HFm", "\shortmid"{marking}, from=1-2, to=1-3]
      \arrow["HFn", "\shortmid"{marking}, from=1-3, to=1-4]
      \arrow[""{name=1, anchor=center, inner sep=0}, "{G(Fm \odot Fn)}"', "\shortmid"{marking}, from=3-1, to=3-3]
      \arrow[Rightarrow, no head, from=3-1, to=4-1]
      \arrow[Rightarrow, no head, from=3-3, to=4-3]
      \arrow[""{name=2, anchor=center, inner sep=0}, "{\tau_{Fz}}"', "\shortmid"{marking}, from=4-3, to=4-4]
      \arrow[""{name=3, anchor=center, inner sep=0}, "{GF(m \odot n)}"', "\shortmid"{marking}, from=4-1, to=4-3]
      \arrow[""{name=4, anchor=center, inner sep=0}, "{\tau_{Fx}}"', "\shortmid"{marking}, from=2-1, to=2-2]
      \arrow[""{name=5, anchor=center, inner sep=0}, "{H(Fm \odot Fn)}"', "\shortmid"{marking}, from=2-2, to=2-4]
      \arrow[Rightarrow, no head, from=1-1, to=2-1]
      \arrow[Rightarrow, no head, from=1-2, to=2-2]
      \arrow[Rightarrow, no head, from=2-4, to=3-4]
      \arrow[Rightarrow, no head, from=2-1, to=3-1]
      \arrow[""{name=6, anchor=center, inner sep=0}, "{\tau_{Fz}}", "\shortmid"{marking}, from=3-3, to=3-4]
      \arrow[Rightarrow, no head, from=1-4, to=2-4]
      \arrow[Rightarrow, no head, from=3-4, to=4-4]
      \arrow["{G(F_{m,n})}"{description, pos=0.6}, draw=none, from=1, to=3]
      \arrow["1"{description}, draw=none, from=0, to=4]
      \arrow["1"{description}, draw=none, from=6, to=2]
      \arrow["{\tau_{Fm \odot Fn}}"{description}, draw=none, from=5, to=1]
      \arrow["{H_{Fm,Fn}}"{description, pos=0.4}, draw=none, from=1-3, to=5]
    \end{tikzcd} \\
    &=
    \begin{tikzcd}[ampersand replacement=\&,sep=scriptsize]
      GFx \& HFx \& HFy \& HFz \\
      \& HFx \&\& HFz \\
      GFx \& HFx \&\& HFz \\
      GFx \&\& GFz \& HFz
      \arrow[""{name=0, anchor=center, inner sep=0}, "{\tau_{Fx}}", "\shortmid"{marking}, from=1-1, to=1-2]
      \arrow["HFm", "\shortmid"{marking}, from=1-2, to=1-3]
      \arrow["HFn", "\shortmid"{marking}, from=1-3, to=1-4]
      \arrow["{\tau_{Fz}}"', "\shortmid"{marking}, from=4-3, to=4-4]
      \arrow["{GF(m \odot n)}"', "\shortmid"{marking}, from=4-1, to=4-3]
      \arrow[""{name=1, anchor=center, inner sep=0}, "{H(Fm \odot Fn)}"', "\shortmid"{marking}, from=2-2, to=2-4]
      \arrow[Rightarrow, no head, from=1-2, to=2-2]
      \arrow[Rightarrow, no head, from=1-4, to=2-4]
      \arrow[""{name=2, anchor=center, inner sep=0}, "{HF(m \odot n)}"', "\shortmid"{marking}, from=3-2, to=3-4]
      \arrow[Rightarrow, no head, from=2-2, to=3-2]
      \arrow[Rightarrow, no head, from=2-4, to=3-4]
      \arrow[Rightarrow, no head, from=1-1, to=3-1]
      \arrow[""{name=3, anchor=center, inner sep=0}, "{\tau_{Fx}}", "\shortmid"{marking}, from=3-1, to=3-2]
      \arrow[Rightarrow, no head, from=3-1, to=4-1]
      \arrow[Rightarrow, no head, from=3-4, to=4-4]
      \arrow["{\tau_{F(m \odot n)}}"{description}, draw=none, from=3-2, to=4-3]
      \arrow["{H_{Fm,Fn}}"{description, pos=0.4}, draw=none, from=1-3, to=1]
      \arrow["{H(F_{m,n})}"{description, pos=0.6}, draw=none, from=1, to=2]
      \arrow["1"{description}, draw=none, from=0, to=3]
    \end{tikzcd},
  \end{align*}
  \endgroup
  where the first equation is the coherence of $\tau$ at the proarrows
  $Fx \xproto{Fm} Fy \xproto{Fn} Fz$ in $\dbl{D}$ and the second equation is the
  naturality of $\tau$ with respect to the cell $F_{m,n}$. Coherence with respect
  to external identities is proved similarly.

  Finally, the equivariance axioms of a pre-whiskered modification $\mu*F$ at an
  arrow $f$ or a proarrow $m$ in $\dbl{C}$ follow immediately from those of the
  original modification $\mu$ at $Ff$ or $Fm$, respectively.
\end{proof}

\begin{lemma}[Pre-whiskering is functorial]
  \label{lem:pre-whiskering-functoriality}
  Given double categories $\dbl{D}$ and $\dbl{E}$, pre-whiskering with a lax
  double functor $F: \dbl{C} \to \dbl{D}$ defines a \emph{strict} double functor
  \begin{equation*}
    (-) * F: \Lax_p(\dbl{D},\dbl{E}) \to \Lax_p(\dbl{C},\dbl{E}),
  \end{equation*}
  whenever `$p$' is replaced with any of `lax', `oplax', or `pseudo'.
\end{lemma}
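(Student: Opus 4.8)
The plan is to notice that pre-whiskering with $F$ acts purely by \emph{reindexing} along $F$: the value of $\tau * F$ (or of $\alpha * F$, or of $\mu * F$) at an object $x$, an arrow $f$, or a proarrow $m$ of $\dbl{C}$ is by definition the value of $\tau$ (respectively $\alpha$, respectively $\mu$) at $Fx$, $Ff$, or $Fm$. Since every composite, identity, associator, and unitor in $\Lax_p(\dbl{D},\dbl{E})$ and in $\Lax_p(\dbl{C},\dbl{E})$ is computed componentwise in $\dbl{E}$, reindexing is automatically compatible with all of this structure \emph{on the nose}. The proof therefore reduces to checking well-definedness and then reading functoriality and strictness off the componentwise formulas; the argument is the same for $p$ equal to lax, oplax, or pseudo, the three cases differing only in the orientation of the naturality comparisons and, correspondingly, of the relevant coherence axioms.

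First, well-definedness. The composite $G \circ F$ of lax functors is again lax, with structure constraints the evident pastings of those of $F$ and $G$; $\alpha * F$ is visibly a natural transformation; and $\tau * F$ and $\mu * F$ are a protransformation and a modification of the same flavour, as already asserted when pre-whiskering was defined. That last point is where the only genuine verification lies: the protransformation axioms for $\tau * F$ at an object, an arrow, a proarrow, or a composable pair of proarrows of $\dbl{C}$ are precisely the corresponding axioms for $\tau$ at the $F$-images, once one expands the structure constraints $(G \circ F)_x$ and $(G \circ F)_{m,n}$ as composites of $F_x$, $F_{m,n}$ with $G_x$, $G_{m,n}$ and invokes naturality of $\tau$ with respect to the cells $F_{m,n}$ and $F_x$; the modification axioms for $\mu * F$ are treated identically.

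Second, functoriality. In the tight direction one has $(\beta \circ \alpha) * F = (\beta * F) \circ (\alpha * F)$ and $\id_G * F = \id_{G \circ F}$, since vertical composition and identities of natural transformations are pointwise and $(\gamma * F)_x = \gamma_{Fx}$. In the loose direction $(\sigma \odot \tau) * F = (\sigma * F) \odot (\tau * F)$: the components at objects and arrows of $\dbl{C}$ and the naturality comparisons of the two sides are literally the same componentwise pastings in $\dbl{E}$ of the $F$-images of the data of $\sigma$ and $\tau$ --- here it matters that the definition of $\odot$ on protransformations does not involve the structure constraints of the functors --- and $\id_G * F = \id_{G \circ F}$ because $(\id_G)_x = \id_{Gx}$ and the comparisons of $\id_G$ are unitors of $\dbl{E}$. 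Horizontal and vertical composition of modifications, and identity modifications, are handled the same way, all composites being componentwise.

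Finally, strictness. A lax double functor carries invertible comparison cells for external composition and external units; here these may be taken to be identities precisely because the two sides of $(\sigma \odot \tau) * F = (\sigma * F) \odot (\tau * F)$ and of $\id_G * F = \id_{G \circ F}$ are equal, not merely isomorphic. It remains to see that the associator and unitor modifications of $\Lax_p(\dbl{D},\dbl{E})$ are carried to those of $\Lax_p(\dbl{C},\dbl{E})$, which again holds componentwise, both families being defined by the associators and unitors of $\dbl{E}$ at (images of) the relevant proarrows. I expect no real obstacle; the single point demanding care is the bookkeeping in the well-definedness step that matches the reindexed axioms against the originals --- exactly the bookkeeping already implicit in the construction of pre-whiskering.
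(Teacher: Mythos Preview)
Your proposal is correct and follows essentially the same approach as the paper: both identify that the only substantive work is well-definedness of $\tau * F$ (specifically the coherence axioms, which require naturality of $\tau$ with respect to the laxity cells $F_{m,n}$ and $F_x$), while strict functoriality follows because pre-whiskering is pure reindexing and all compositions in $\Lax_p(\dbl{D},\dbl{E})$ are computed componentwise in $\dbl{E}$.
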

\begin{proof}
  Immediate since $(-) * F$ acts simply by reindexing components and comparisons.
\end{proof}

Turning to post-whiskering, we caution again that protransformations cannot be
post-whiskered with lax double functors, only with pseudo ones. This problem is
well known for (lax, oplax, or pseudo) natural transformations in the
bicategorical setting \cite[\S{11.1}]{johnson2021}; in fact, it is a good reason
to consider double categories in the first place, since ordinary natural
transformations between lax double functors do not suffer from this problem
\cite{shulman2009}.

\begin{construction}[Post-whiskering]
  Let $\tau: F \proTo G: \dbl{C} \to \dbl{D}$ be a lax protransformation between lax
  double functors. The \define{post-whiskering} of $\tau$ with a double functor
  $H: \dbl{D} \to \dbl{E}$ is the lax protransformation
  \begin{equation*}
    H*\tau: H \circ F \proTo H \circ G: \dbl{C} \to \dbl{E}
  \end{equation*}
  with components $(H*\tau)_x \coloneqq H(\tau_x)$ and $(H*\tau)_f \coloneqq H(\tau_f)$ at
  each object $x$ and arrow $f$ in $\dbl{C}$ and with natural comparisons
  \begin{equation*}
    \begin{tikzcd}
      HFx & HGx & HGy \\
      HFx & HFy & HGy
      \arrow["HFm"', "\shortmid"{marking}, from=2-1, to=2-2]
      \arrow["{(H*\tau)_y}"', "\shortmid"{marking}, from=2-2, to=2-3]
      \arrow["{(H*\tau)_x}", "\shortmid"{marking}, from=1-1, to=1-2]
      \arrow["HGm", "\shortmid"{marking}, from=1-2, to=1-3]
      \arrow[Rightarrow, no head, from=1-1, to=2-1]
      \arrow[Rightarrow, no head, from=1-3, to=2-3]
      \arrow["{(H*\tau)_m}"{description}, draw=none, from=1-2, to=2-2]
    \end{tikzcd}
    \quad\coloneqq\quad
    \begin{tikzcd}
      HFx & Gx & Gy \\
      HFx && HGy \\
      HFx && HGy \\
      HFx & HFy & HGy
      \arrow["HFm"', "\shortmid"{marking}, from=4-1, to=4-2]
      \arrow["{H\tau_y}"', "\shortmid"{marking}, from=4-2, to=4-3]
      \arrow["{H\tau_x}", "\shortmid"{marking}, from=1-1, to=1-2]
      \arrow["HGm", "\shortmid"{marking}, from=1-2, to=1-3]
      \arrow[""{name=0, anchor=center, inner sep=0}, "{H(\tau_x \odot Gm)}", "\shortmid"{marking}, from=2-1, to=2-3]
      \arrow[Rightarrow, no head, from=1-1, to=2-1]
      \arrow[Rightarrow, no head, from=1-3, to=2-3]
      \arrow[Rightarrow, no head, from=2-1, to=3-1]
      \arrow[Rightarrow, no head, from=2-3, to=3-3]
      \arrow[""{name=1, anchor=center, inner sep=0}, "{H(Fm \odot \tau_y)}"', "\shortmid"{marking}, from=3-1, to=3-3]
      \arrow[Rightarrow, no head, from=3-1, to=4-1]
      \arrow[Rightarrow, no head, from=3-3, to=4-3]
      \arrow["{H_{\tau_x,Gm}}"{description, pos=0.3}, draw=none, from=1-2, to=0]
      \arrow["{H\tau_m}"{description}, draw=none, from=0, to=1]
      \arrow["{H_{Fm,\tau_y}^{-1}}"{description, pos=0.7}, draw=none, from=1, to=4-2]
    \end{tikzcd}
  \end{equation*}
  for each proarrow $m: x \proto y$ in $\dbl{C}$. Post-whiskering of an oplax or
  pseudo protransformation is defined similarly.

  The \define{post-whiskering} $H*\mu$ of a modification $\mu$ is defined by
  applying $H$ componentwise, so that $(H*\mu)_x \coloneqq H(\mu_x)$ for each
  object $x \in \dbl{C}$.
\end{construction}

We omit the proof that post-whiskerings of protransformations are well-defined,
as it is again essentially the same as in the bicategorical setting
\cite[\mbox{Lemma 11.1.6}]{johnson2021}.

\begin{lemma}[Post-whiskering is functorial]
  \label{lem:post-whiskering-functoriality}
  Given double categories $\dbl{C}$ and $\dbl{D}$, post-whiskering with a
  (pseudo) double functor $H: \dbl{D} \to \dbl{E}$ defines a double functor
  \begin{equation*}
    H * (-): \Lax_p(\dbl{C},\dbl{D}) \to \Lax_p(\dbl{C},\dbl{E}),
  \end{equation*}
  whenever `$p$' is replaced with any of `lax', `oplax', or `pseudo'.
\end{lemma}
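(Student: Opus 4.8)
The plan is to check the four layers of data and then functoriality together with the pseudo double functor structure cells. The assignments are already in hand: on objects, $F \mapsto H \circ F$, which is lax because the composite of a lax functor with a pseudo functor is lax; on arrows, $\alpha \mapsto H*\alpha$, the post-whiskering of natural transformations, which is unproblematic even for lax $F,G$ since ordinary natural transformations between double functors carry no comparison data in the proarrow direction \cite{shulman2009}; and on proarrows and cells, the post-whiskering construction given above, whose well-definedness we have cited from the bicategorical case \cite[Lemma 11.1.6]{johnson2021}. It remains to verify: (i) functoriality in the strict direction; (ii) that $H*(-)$ carries the requisite external-composition and unit comparison cells; and (iii) the coherence axioms of a pseudo double functor.

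For (i), natural transformations $F \To G$ compose by vertical composition, computed componentwise in $\dbl{D}$, and $(H*\alpha)_x = H(\alpha_x)$, $(H*\alpha)_f = H(\alpha_f)$; since $H_0: \dbl{D}_0 \to \dbl{E}_0$ and $H_1: \dbl{D}_1 \to \dbl{E}_1$ are functors, $H*(-)$ preserves identity natural transformations and their vertical composites, so it is a strict functor on the category of objects and arrows of $\Lax_p(\dbl{C},\dbl{D})$. Likewise modifications compose componentwise and $H$ is applied componentwise, so $H*(-)$ is functorial on cells in both directions and preserves identity modifications. For (ii), given composable protransformations $\sigma: F \proTo G$ and $\tau: G \proTo K$, the comparison modification $(H*\sigma) \odot (H*\tau) \To H*(\sigma \odot \tau)$ is defined to have $x$-component the external-composition comparison $H_{\sigma_x,\tau_x}\colon H\sigma_x \odot H\tau_x \To H(\sigma_x \odot \tau_x)$ of $H$, which is invertible because $H$ is pseudo; the unit comparison $\id_{HF} \To H*\id_F$ has $x$-component the external-identity comparison $H_{Fx}\colon \id_{HFx} \To H(\id_{Fx})$, again invertible. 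Naturality of these families with respect to modifications $\mu: \sigma \To \sigma'$ and $\nu: \tau \To \tau'$ is immediate from the componentwise naturality of the comparison cells of $H$.

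The main obstacle is showing that each of these componentwise families is genuinely a modification, i.e.\ satisfies the internal and external equivariance axioms of \cref{def:modification}. Internal equivariance at an arrow $f$ reduces to the naturality of $H_{-,-}$ with respect to the pair of cells $(\sigma_f,\tau_f)$, which is routine. External equivariance at a proarrow $m$ is the substantial calculation: one must unfold the definitions of $(H*\sigma)_m$, $(H*\tau)_m$, and $(H*(\sigma \odot \tau))_m$ --- each built from a forward and an inverse instance of the external-composition comparison of $H$ pasted around $H$ applied to a naturality cell --- and reconcile the two sides using (a) the associativity coherence of the pseudo double functor $H$, relating $H_{a, b \odot c}$, $H_{a \odot b, c}$, $H_{b,c}$, $H_{a,b}$, and the associators of $\dbl{D}$ and $\dbl{E}$, and (b) the naturality of $H_{-,-}$ with respect to the cells $\sigma_m$ and $\tau_m$. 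This is the exact analogue of the bicategorical computation underlying \cite[Lemma 11.1.6]{johnson2021} and we will present it as a diagram chase.

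Finally, for (iii), the associator and unitor modifications of both $\Lax_p(\dbl{C},\dbl{D})$ and $\Lax_p(\dbl{C},\dbl{E})$ are defined componentwise by the associators and unitors of $\dbl{D}$ and $\dbl{E}$; hence the pentagon and the two triangle identities required of the pseudo double functor $H*(-)$ hold because, component by component, they are precisely the pentagon and triangle coherences of $H$. The oplax case is identical with the orientation of the naturality comparisons reversed, appealing to the reversal duality $\LaxOpl(\dbl{C},\dbl{D})^\rev \cong \LaxLax(\dbl{C}^\rev,\dbl{D}^\rev)$ if one prefers to deduce it formally; and the pseudo case follows since $H$ preserves invertibility of cells, so $H*(-)$ restricts to the sub-double category of pseudo protransformations.
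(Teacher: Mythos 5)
Your proposal is correct, and since the paper omits this proof entirely (deferring, like the well-definedness of post-whiskering itself, to the analogy with the bicategorical case of Johnson--Yau), it supplies exactly the argument the paper intends: comparison modifications with components $H_{\sigma_x,\tau_x}$ and $H_{Fx}$, strict functoriality in the tight direction, and coherence holding componentwise because associators and unitors in the functor double categories are componentwise. The one step you only sketch---external equivariance of the comparison modifications---is indeed the substantial diagram chase, and the ingredients you cite (the composition coherence of $H$ and naturality of its comparison cells with respect to $\sigma_m$ and $\tau_m$) are the right ones and do suffice.
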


\subsection{Companion and conjoint transformations}
\label{sec:companion-transformations}

Among other possible uses, double categories of protransformations are
environments in which to find companions and conjoints of natural
transformations between double functors.

\begin{theorem}[Companions and conjoints of natural transformations]
  \label{thm:companion-transformation}
  A natural transformation $\alpha: F \To G$ between lax double functors
  $F,G: \dbl{D} \to \dbl{E}$ has a companion in $\LaxOpl(\dbl{D},\dbl{E})$, which
  is an oplax protransformation $\alpha_!: F \proTo G$, if and only if each
  component arrow $\alpha_x: Fx \to Gx$ has a companion in $\dbl{E}$.

  Dually, the natural transformation $\alpha: F \To G$ has a conjoint in
  $\LaxLax(\dbl{D},\dbl{E})$, which is a lax protransformation
  $\alpha^*: G \proTo F$, if and only if each component $\alpha_x$ has a conjoint in
  $\dbl{E}$.
\end{theorem}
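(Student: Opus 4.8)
The plan is to transfer companion structure between two levels: a companion of the \emph{arrow} $\alpha$ in the double category $\LaxOpl(\dbl{D},\dbl{E})$, versus a companion of each \emph{component arrow} $\alpha_x$ in $\dbl{E}$. The bridge is that the arrows of $\LaxOpl(\dbl{D},\dbl{E})$ are natural transformations, its cells are modifications, and all of its composites are formed componentwise in $\dbl{E}$, so everything reduces to the objects $x \in \dbl{D}$.

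\emph{Necessity.} Suppose $\alpha$ has a companion $\alpha_!$ in $\LaxOpl(\dbl{D},\dbl{E})$ with unit and counit modifications $\eta$ and $\varepsilon$. Since the identity protransformation $\id_F$ (which is pseudo, hence a legitimate proarrow) has object component $\id_{Fx}$, and the identity natural transformation $\id_F$ has object component $\id_{Fx}$, the component of $\eta$ at $x$ is a cell in $\dbl{E}$ with exactly the shape of the unit of a companion of $\alpha_x : Fx \to Gx$ with candidate companion $(\alpha_!)_x$, and the component of $\varepsilon$ at $x$ has the shape of the matching counit. Because modifications and their composites are determined componentwise, the companion equations $\eta \odot \varepsilon = 1_{\alpha_!}$ and $\eta \cdot \varepsilon = \id_\alpha$ in $\LaxOpl(\dbl{D},\dbl{E})$ give, at each $x$, the companion equations $\eta_x \odot \varepsilon_x = 1_{(\alpha_!)_x}$ and $\eta_x \cdot \varepsilon_x = \id_{\alpha_x}$ in $\dbl{E}$. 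Hence $(\alpha_!)_x$ is a companion of $\alpha_x$. This direction is immediate.

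\emph{Sufficiency.} Conversely, fix for each object $x$ a companion $(\alpha_x)_!$ of $\alpha_x$ in $\dbl{E}$, with unit $\eta^x$ and counit $\varepsilon^x$. I would build an oplax protransformation $\alpha_! : F \proTo G$ with object components $(\alpha_!)_x \coloneqq (\alpha_x)_!$: its component at an arrow $f : x \to y$ is obtained by sliding the arrows $\alpha_x$ and $\alpha_y$ around the corners of the naturality square $\alpha_x \cdot Gf = Ff \cdot \alpha_y$, via \cref{lem:sliding-general}, so as to produce a cell with the arrows $Ff, Gf$ on the sides and the proarrows $(\alpha_x)_!, (\alpha_y)_!$ on top and bottom; its naturality comparison at a proarrow $m : x \proto y$ is obtained by sliding the component $\alpha_m$ of $\alpha$ at $m$ via the globular bijection of \cref{lem:sliding-globular}, always using the fixed companions, so that the companion of $\alpha_y$ is reused for arrows and proarrows incident to $y$. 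A direct inspection of the bijection shows that this slide lands in the \emph{oplax} naturality-comparison shape rather than the lax one; this is forced, since $\alpha_m$ is a single cell directed from $Fm$ to $Gm$ and is not invertible when $F$ and $G$ are merely lax (when $\alpha$ is pseudo as a transformation one may also slide the inverse $\alpha_m^{-1}$, obtaining a pseudo protransformation). The unit and counit modifications exhibiting $\alpha_!$ as a companion of $\alpha$ are then defined componentwise by $\eta_x \coloneqq \eta^x$ and $\varepsilon_x \coloneqq \varepsilon^x$.

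The remaining task, and the main obstacle, is to check that $\alpha_!$ really is an oplax protransformation and that $\eta, \varepsilon$ really are modifications. Functoriality of the arrow components of $\alpha_!$ and its naturality with respect to cells of $\dbl{D}$ follow from the functoriality and naturality of sliding (\cref{lem:sliding-functoriality-external-special,lem:sliding-functoriality-internal-special,lem:sliding-naturality}) together with the functoriality of $\alpha$ on arrows and its naturality on cells. The two coherence axioms — compatibility of the naturality comparisons with external composition and with external identities — are the hard part: they must be deduced from the compatibility of $\alpha$, regarded as a natural transformation internal to $\Cat$, with the external composition comparisons $F_{m,n}, G_{m,n}$ and the unit comparisons of the lax functors $F$ and $G$, combined with the external functoriality of sliding (\cref{lem:sliding-functoriality-external-globular}) and the naturality of sliding against those comparison cells (\cref{lem:sliding-naturality}); this is a pasting computation running parallel to the bicategorical passage from a pseudonatural transformation to an oplax one. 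The modification axioms for $\eta$ and $\varepsilon$ are, by contrast, easy: since $(\alpha_!)_f$ and $(\alpha_!)_m$ were defined as the slides of the naturality square and of $\alpha_m$, and the sliding formulas of \cref{lem:sliding-general,lem:sliding-globular} are themselves pastings against $\eta^x$ and $\varepsilon^x$, the equivariance equations come out directly; and the companion equations for $\alpha_!$ then hold componentwise because $\eta^x \odot \varepsilon^x = 1_{(\alpha_x)_!}$ and $\eta^x \cdot \varepsilon^x = \id_{\alpha_x}$ in $\dbl{E}$. The dual statement, about conjoints in $\LaxLax(\dbl{D},\dbl{E})$, follows by running the whole argument in the reverse double categories, via \cref{rem:duality} and the isomorphism $\LaxOpl(\dbl{D},\dbl{E})^\rev \cong \LaxLax(\dbl{D}^\rev,\dbl{E}^\rev)$.
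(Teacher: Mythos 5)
Your proposal is correct and follows essentially the same route as the paper: necessity is read off componentwise, and sufficiency constructs $\alpha_!$ by sliding the naturality squares and the cells $\alpha_m$ (\cref{lem:sliding-general,lem:sliding-globular}), verifies the oplax axioms with the functoriality and naturality lemmas for sliding, and defines the binding modifications componentwise, with the dual statement obtained by reversal duality. The only point where the paper works a bit harder than your ``come out directly'' suggests is the equivariance axioms for $\eta$ and $\varepsilon$, which it checks by post-composing with counit cells and invoking their universal property as restriction cells.
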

\begin{proof}
  Supposing each component of $\alpha: F \To G$ has a companion, we define an
  oplax protransformation $\alpha_!: F \proTo G$ as follows. The component of
  $\alpha_!$ at an object $x \in \dbl{D}$ is any choice of companion
  $(\alpha_!)_x \coloneqq (\alpha_x)_!: Fx \proto Gx$ of the corresponding
  component $\alpha_x: Fx \to Gx$ of $\alpha$. The component of $\alpha_!$ at an
  arrow $f: x \to y$ in $\dbl{D}$ is defined by sliding arrows
  (\cref{lem:sliding-general}) in the identity cell
  \begin{equation*}
    \begin{tikzcd}[row sep=scriptsize]
      Fx & Fx \\
      Fy & Gx \\
      Gy & Gy
      \arrow["Ff"', from=1-1, to=2-1]
      \arrow["{\alpha_y}"', from=2-1, to=3-1]
      \arrow[""{name=0, anchor=center, inner sep=0}, "{\id_{Fx}}", "\shortmid"{marking}, from=1-1, to=1-2]
      \arrow["{\alpha_x}", from=1-2, to=2-2]
      \arrow["Gf", from=2-2, to=3-2]
      \arrow[""{name=1, anchor=center, inner sep=0}, "{\id_{Gy}}"', "\shortmid"{marking}, from=3-1, to=3-2]
      \arrow["\id"{description}, draw=none, from=0, to=1]
    \end{tikzcd}
    \qquad\leadsto\qquad
    \begin{tikzcd}
      Fx & Gx \\
      Fy & Gy
      \arrow[""{name=0, anchor=center, inner sep=0}, "{(\alpha_!)_x}", "\shortmid"{marking}, from=1-1, to=1-2]
      \arrow["Ff"', from=1-1, to=2-1]
      \arrow["Gf", from=1-2, to=2-2]
      \arrow[""{name=1, anchor=center, inner sep=0}, "{(\alpha_!)_y}"', "\shortmid"{marking}, from=2-1, to=2-2]
      \arrow["{(\alpha_!)_f}"{description}, draw=none, from=0, to=1]
    \end{tikzcd}
  \end{equation*}
  on the left, induced by the naturality square of $\alpha$ at $f$. The
  naturality comparison of $\alpha_!$ at a proarrow $m: x \proto y$ in $\dbl{D}$
  is defined by sliding arrows (\cref{lem:sliding-globular}) in the component of
  $\alpha$ at $m$:
  \begin{equation} \label{eq:companion-naturality-comparison}
    \begin{tikzcd}
      Fx & Fy \\
      Gx & Gy
      \arrow[""{name=0, anchor=center, inner sep=0}, "Fm", "\shortmid"{marking}, from=1-1, to=1-2]
      \arrow["{\alpha_x}"', from=1-1, to=2-1]
      \arrow["{\alpha_y}", from=1-2, to=2-2]
      \arrow[""{name=1, anchor=center, inner sep=0}, "Gm"', "\shortmid"{marking}, from=2-1, to=2-2]
      \arrow["{\alpha_m}"{description}, draw=none, from=0, to=1]
    \end{tikzcd}
    \qquad\leadsto\qquad
    \begin{tikzcd}
      Fx & Fy & Gy \\
      Fx & Gx & Gy
      \arrow["Fm", "\shortmid"{marking}, from=1-1, to=1-2]
      \arrow["Gm"', "\shortmid"{marking}, from=2-2, to=2-3]
      \arrow["{(\alpha_!)_x}"', "\shortmid"{marking}, from=2-1, to=2-2]
      \arrow["{(\alpha_!)_y}", "\shortmid"{marking}, from=1-2, to=1-3]
      \arrow["{(\alpha_!)_m}"{description}, draw=none, from=1-2, to=2-2]
      \arrow[Rightarrow, no head, from=1-1, to=2-1]
      \arrow[Rightarrow, no head, from=1-3, to=2-3]
    \end{tikzcd}.
  \end{equation}
  We thus have the data of an oplax protransformation $\alpha_!: F \proTo G$.

  The axioms now follow straightforwardly from the functoriality and naturality
  of sliding. First, for any arrows $x \xto{f} y \xto{g} z$ in $\dbl{D}$, the
  equation between external identities
  \begin{equation*}
    \begin{tikzcd}[row sep=scriptsize]
      Fx & Fx \\
      Fz & Gx \\
      Gz & Gz
      \arrow["{F(f \cdot g)}"', from=1-1, to=2-1]
      \arrow["{\alpha_z}"', from=2-1, to=3-1]
      \arrow[""{name=0, anchor=center, inner sep=0}, "{\id_{Fx}}", "\shortmid"{marking}, from=1-1, to=1-2]
      \arrow["{\alpha_x}", from=1-2, to=2-2]
      \arrow["{G(f \cdot g)}", from=2-2, to=3-2]
      \arrow[""{name=1, anchor=center, inner sep=0}, "{\id_{Gz}}"', "\shortmid"{marking}, from=3-1, to=3-2]
      \arrow["\id"{description}, draw=none, from=0, to=1]
    \end{tikzcd}
    \quad=\quad
    \begin{tikzcd}[row sep=scriptsize]
      Fx & Fx & Fx \\
      Fy & Fy & Gx \\
      Fz & Gy & Gy \\
      Gz & Gz & Gz
      \arrow[""{name=0, anchor=center, inner sep=0}, "{\id_{Fx}}", "\shortmid"{marking}, from=1-1, to=1-2]
      \arrow[""{name=1, anchor=center, inner sep=0}, "{\id_{Gz}}"', "\shortmid"{marking}, from=4-1, to=4-2]
      \arrow["Fg"', from=2-1, to=3-1]
      \arrow["{\alpha_z}"', from=3-1, to=4-1]
      \arrow["{\alpha_y}"', from=2-2, to=3-2]
      \arrow["Gg"', from=3-2, to=4-2]
      \arrow[""{name=2, anchor=center, inner sep=0}, "{\id_{Fy}}"', "\shortmid"{marking}, from=2-1, to=2-2]
      \arrow["Ff"', from=1-1, to=2-1]
      \arrow["Ff", from=1-2, to=2-2]
      \arrow[""{name=3, anchor=center, inner sep=0}, "{\id_{Gy}}", "\shortmid"{marking}, from=3-2, to=3-3]
      \arrow[""{name=4, anchor=center, inner sep=0}, "{\id_{Fx}}", "\shortmid"{marking}, from=1-2, to=1-3]
      \arrow["{\alpha_x}", from=1-3, to=2-3]
      \arrow["Gf", from=2-3, to=3-3]
      \arrow["Gg", from=3-3, to=4-3]
      \arrow[""{name=5, anchor=center, inner sep=0}, "{\id_{Gz}}"', "\shortmid"{marking}, from=4-2, to=4-3]
      \arrow["{\id_{Ff}}"{description}, draw=none, from=0, to=2]
      \arrow["\id"{description}, draw=none, from=2, to=1]
      \arrow["\id"{description}, draw=none, from=4, to=3]
      \arrow["{\id_{Gg}}"{description}, draw=none, from=3, to=5]
    \end{tikzcd}
  \end{equation*}
  implies that $(\alpha_!)_{f \cdot g} = (\alpha_!)_f \cdot (\alpha_!)_g$, by
  \cref{lem:sliding-functoriality-internal-special}. Also,
  $(\alpha_!)_{1_x} = 1_{(\alpha_!)_x}$ by the same lemma, proving the functoriality of the
  components of $\alpha_!$. The coherence axioms of $\alpha$ imply those of $\alpha_!$.
  Specifically, for any proarrows $x \xproto{m} y \xproto{n} z$ in $\dbl{D}$,
  the coherence equation
  \begin{equation*}
    \begin{tikzcd}[row sep=scriptsize]
      Fx & Fy & Fz \\
      Gx & Gy & Gz \\
      Gx && Gz
      \arrow[""{name=0, anchor=center, inner sep=0}, "Fm", "\shortmid"{marking}, from=1-1, to=1-2]
      \arrow[""{name=1, anchor=center, inner sep=0}, "Fn", "\shortmid"{marking}, from=1-2, to=1-3]
      \arrow[""{name=2, anchor=center, inner sep=0}, "{G(m \odot n)}"', "\shortmid"{marking}, from=3-1, to=3-3]
      \arrow[Rightarrow, no head, from=2-1, to=3-1]
      \arrow[Rightarrow, no head, from=2-3, to=3-3]
      \arrow[""{name=3, anchor=center, inner sep=0}, "Gm"', "\shortmid"{marking}, from=2-1, to=2-2]
      \arrow[""{name=4, anchor=center, inner sep=0}, "Gn"', "\shortmid"{marking}, from=2-2, to=2-3]
      \arrow["{\alpha_x}"', from=1-1, to=2-1]
      \arrow["{\alpha_z}", from=1-3, to=2-3]
      \arrow["{\alpha_y}"{description}, from=1-2, to=2-2]
      \arrow["{\alpha_m}"{description}, draw=none, from=0, to=3]
      \arrow["{\alpha_n}"{description}, draw=none, from=1, to=4]
      \arrow["{G_{m,n}}"{description}, draw=none, from=2-2, to=2]
    \end{tikzcd}
    \quad=\quad
    \begin{tikzcd}[row sep=scriptsize]
      Fx & Fy & Fz \\
      Fx && Fz \\
      Gx && Gz
      \arrow["Fm", "\shortmid"{marking}, from=1-1, to=1-2]
      \arrow["Fn", "\shortmid"{marking}, from=1-2, to=1-3]
      \arrow[""{name=0, anchor=center, inner sep=0}, "{F(m \odot n)}"', "\shortmid"{marking}, from=2-1, to=2-3]
      \arrow[Rightarrow, no head, from=1-1, to=2-1]
      \arrow[Rightarrow, no head, from=1-3, to=2-3]
      \arrow[""{name=1, anchor=center, inner sep=0}, "{G(m \odot n)}"', "\shortmid"{marking}, from=3-1, to=3-3]
      \arrow["{\alpha_z}"', from=2-3, to=3-3]
      \arrow["{\alpha_x}"', from=2-1, to=3-1]
      \arrow["{F_{m,n}}"{description}, draw=none, from=1-2, to=0]
      \arrow["{\alpha_{m \odot n}}"{description, pos=0.6}, draw=none, from=0, to=1]
    \end{tikzcd}
  \end{equation*}
  induces the corresponding coherence equation
  \begin{equation*}
    \begin{tikzcd}[row sep=scriptsize]
      Fx & Fy & Fz & Gz \\
      Fx & Fy & Gy & Gz \\
      Fx & Gx & Gy & Gz \\
      Fx & Gx && Gz
      \arrow[""{name=0, anchor=center, inner sep=0}, "{G(m \odot n)}"', "\shortmid"{marking}, from=4-2, to=4-4]
      \arrow[Rightarrow, no head, from=3-2, to=4-2]
      \arrow[Rightarrow, no head, from=3-4, to=4-4]
      \arrow["Gm"', "\shortmid"{marking}, from=3-2, to=3-3]
      \arrow[""{name=1, anchor=center, inner sep=0}, "Gn"', "\shortmid"{marking}, from=3-3, to=3-4]
      \arrow[""{name=2, anchor=center, inner sep=0}, "{(\alpha_!)_x}", "\shortmid"{marking}, from=3-1, to=3-2]
      \arrow["{(\alpha_!)_y}"', "\shortmid"{marking}, from=2-2, to=2-3]
      \arrow[Rightarrow, no head, from=2-1, to=3-1]
      \arrow[Rightarrow, no head, from=2-3, to=3-3]
      \arrow[""{name=3, anchor=center, inner sep=0}, "Fm"', "\shortmid"{marking}, from=2-1, to=2-2]
      \arrow[""{name=4, anchor=center, inner sep=0}, "Gn", "\shortmid"{marking}, from=2-3, to=2-4]
      \arrow[Rightarrow, no head, from=2-4, to=3-4]
      \arrow[""{name=5, anchor=center, inner sep=0}, "{(\alpha_!)_x}"', "\shortmid"{marking}, from=4-1, to=4-2]
      \arrow[Rightarrow, no head, from=3-1, to=4-1]
      \arrow["{(\alpha_!)_m}"{description}, draw=none, from=2-2, to=3-2]
      \arrow["{(\alpha_!)_z}", "\shortmid"{marking}, from=1-3, to=1-4]
      \arrow[Rightarrow, no head, from=1-1, to=2-1]
      \arrow[Rightarrow, no head, from=1-2, to=2-2]
      \arrow[Rightarrow, no head, from=1-4, to=2-4]
      \arrow["Fn", "\shortmid"{marking}, from=1-2, to=1-3]
      \arrow[""{name=6, anchor=center, inner sep=0}, "Fm", "\shortmid"{marking}, from=1-1, to=1-2]
      \arrow["{(\alpha_!)_n}"{description}, draw=none, from=1-3, to=2-3]
      \arrow["{G_{m,n}}"{description}, draw=none, from=3-3, to=0]
      \arrow["{1_{(\alpha_!)_x}}"{description}, draw=none, from=2, to=5]
      \arrow["{1_{Gn}}"{description}, draw=none, from=4, to=1]
      \arrow["{1_{Fm}}"{description}, draw=none, from=6, to=3]
    \end{tikzcd}
    \quad=\quad
    \begin{tikzcd}[row sep=scriptsize]
      Fx & Fy & Fz & Gz \\
      Fx && Fz & Gz \\
      Fx & Gx && Gz
      \arrow["Fm", "\shortmid"{marking}, from=1-1, to=1-2]
      \arrow["Fn", "\shortmid"{marking}, from=1-2, to=1-3]
      \arrow[""{name=0, anchor=center, inner sep=0}, "{F(m \odot n)}"', "\shortmid"{marking}, from=2-1, to=2-3]
      \arrow[Rightarrow, no head, from=1-1, to=2-1]
      \arrow[Rightarrow, no head, from=1-3, to=2-3]
      \arrow[""{name=1, anchor=center, inner sep=0}, "{(\alpha_!)_z}"', "\shortmid"{marking}, from=2-3, to=2-4]
      \arrow[Rightarrow, no head, from=2-1, to=3-1]
      \arrow[Rightarrow, no head, from=1-4, to=2-4]
      \arrow[""{name=2, anchor=center, inner sep=0}, "{(\alpha_!)_z}", "\shortmid"{marking}, from=1-3, to=1-4]
      \arrow[""{name=3, anchor=center, inner sep=0}, "{G(m \odot n)}"', "\shortmid"{marking}, from=3-2, to=3-4]
      \arrow[Rightarrow, no head, from=2-4, to=3-4]
      \arrow["{(\alpha_!)_x}"', from=3-1, to=3-2]
      \arrow["{F_{m,n}}"{description}, draw=none, from=1-2, to=0]
      \arrow["{1_{(\alpha_!)_z}}"{description}, draw=none, from=2, to=1]
      \arrow["{(\alpha_!)_{m \odot n}}"{description}, draw=none, from=0, to=3]
    \end{tikzcd}
  \end{equation*}
  by \cref{lem:sliding-functoriality-external-globular,lem:sliding-naturality}. The
  coherence axiom for identity proarrows is proved similarly. To prove the final
  axiom of naturality with respect to cells, let $\stdInlineCell{\gamma}$ be a
  cell in $\dbl{D}$. We have the naturality equation for $\alpha$:
  \begin{equation*}
    \begin{tikzcd}[row sep=scriptsize]
      Fx & Fx & Fy \\
      Fw & Gx & Gy \\
      Gw & Gw & Gz
      \arrow[""{name=0, anchor=center, inner sep=0}, "Fm", "\shortmid"{marking}, from=1-2, to=1-3]
      \arrow[""{name=1, anchor=center, inner sep=0}, "Gm", "\shortmid"{marking}, from=2-2, to=2-3]
      \arrow["{\alpha_x}"', from=1-2, to=2-2]
      \arrow["{\alpha_y}", from=1-3, to=2-3]
      \arrow[""{name=2, anchor=center, inner sep=0}, "Gn"', "\shortmid"{marking}, from=3-2, to=3-3]
      \arrow["Gf"', from=2-2, to=3-2]
      \arrow["Gg", from=2-3, to=3-3]
      \arrow["Ff"', from=1-1, to=2-1]
      \arrow[""{name=3, anchor=center, inner sep=0}, "{\id_{Fx}}", "\shortmid"{marking}, from=1-1, to=1-2]
      \arrow[""{name=4, anchor=center, inner sep=0}, "{\id_{Gw}}"', "\shortmid"{marking}, from=3-1, to=3-2]
      \arrow["{\alpha_w}"', from=2-1, to=3-1]
      \arrow["{\alpha_m}"{description, pos=0.4}, draw=none, from=0, to=1]
      \arrow["G\gamma"{description}, draw=none, from=1, to=2]
      \arrow["\id"{description}, draw=none, from=3, to=4]
    \end{tikzcd}
    \quad=\quad
    \begin{tikzcd}[row sep=scriptsize]
      Fx & Fy & Fy \\
      Fw & Fz & Gy \\
      Gw & Gz & Gz
      \arrow[""{name=0, anchor=center, inner sep=0}, "Fm", "\shortmid"{marking}, from=1-1, to=1-2]
      \arrow["Ff"', from=1-1, to=2-1]
      \arrow["Fg", from=1-2, to=2-2]
      \arrow[""{name=1, anchor=center, inner sep=0}, "Fn", "\shortmid"{marking}, from=2-1, to=2-2]
      \arrow["{\alpha_w}"', from=2-1, to=3-1]
      \arrow["{\alpha_z}", from=2-2, to=3-2]
      \arrow[""{name=2, anchor=center, inner sep=0}, "Gn"', "\shortmid"{marking}, from=3-1, to=3-2]
      \arrow[""{name=3, anchor=center, inner sep=0}, "{\id_{Fy}}", "\shortmid"{marking}, from=1-2, to=1-3]
      \arrow[""{name=4, anchor=center, inner sep=0}, "{\id_{Gz}}"', "\shortmid"{marking}, from=3-2, to=3-3]
      \arrow["{\alpha_y}", from=1-3, to=2-3]
      \arrow["Gg", from=2-3, to=3-3]
      \arrow["F\gamma"{description, pos=0.4}, draw=none, from=0, to=1]
      \arrow["{\alpha_n}"{description}, draw=none, from=1, to=2]
      \arrow["\id"{description}, draw=none, from=3, to=4]
    \end{tikzcd}.
  \end{equation*}
  Compose both sides on the left with $\id_{Ff} \cdot \eta_w$, where $\eta_w$ is
  the unit cell for $(\alpha_w)_!$, and on the right with
  $\varepsilon_y \cdot \id_{Gg}$, where $\varepsilon_y$ is the counit for
  $(\alpha_y)_!$. Also, insert the identity
  $\id_{\alpha_x} = \eta_x \cdot \varepsilon_x$ in the middle of the left-hand
  side and insert the identity $\id_{\alpha_z} = \eta_z \cdot \varepsilon_z$ in
  the middle of the right-hand side. From the correspondence in
  \cref{eq:sliding-companion}, we obtain the naturality equation
  \begin{equation*}
    \begin{tikzcd}[row sep=scriptsize]
      Fx & Fy & Gy \\
      Fx & Gx & Gy \\
      Fw & Gw & Gz
      \arrow["Fm", "\shortmid"{marking}, from=1-1, to=1-2]
      \arrow["{(\alpha_!)_y}", "\shortmid"{marking}, from=1-2, to=1-3]
      \arrow[""{name=0, anchor=center, inner sep=0}, "{(\alpha_!)_x}", "\shortmid"{marking}, from=2-1, to=2-2]
      \arrow[Rightarrow, no head, from=1-1, to=2-1]
      \arrow[Rightarrow, no head, from=1-3, to=2-3]
      \arrow[""{name=1, anchor=center, inner sep=0}, "Gm", "\shortmid"{marking}, from=2-2, to=2-3]
      \arrow["Ff"', from=2-1, to=3-1]
      \arrow["Gf"{description}, from=2-2, to=3-2]
      \arrow[""{name=2, anchor=center, inner sep=0}, "{(\alpha_!)_w}"', "\shortmid"{marking}, from=3-1, to=3-2]
      \arrow["Gg", from=2-3, to=3-3]
      \arrow[""{name=3, anchor=center, inner sep=0}, "Gn"', "\shortmid"{marking}, from=3-2, to=3-3]
      \arrow["{(\alpha_!)_m}"{description}, draw=none, from=1-2, to=2-2]
      \arrow["{(\alpha_!)_f}"{description}, draw=none, from=0, to=2]
      \arrow["G\gamma"{description}, draw=none, from=1, to=3]
    \end{tikzcd}
    \quad=\quad
    \begin{tikzcd}[row sep=scriptsize]
      Fx & Fy & Gy \\
      Fw & Fz & Gz \\
      Fw & Gw & Gz
      \arrow[""{name=0, anchor=center, inner sep=0}, "Fm", "\shortmid"{marking}, from=1-1, to=1-2]
      \arrow[""{name=1, anchor=center, inner sep=0}, "{(\alpha_!)_y}", "\shortmid"{marking}, from=1-2, to=1-3]
      \arrow["{(\alpha_!)_w}"', "\shortmid"{marking}, from=3-1, to=3-2]
      \arrow["Gn"', "\shortmid"{marking}, from=3-2, to=3-3]
      \arrow["Ff"', from=1-1, to=2-1]
      \arrow["Fg"{description}, from=1-2, to=2-2]
      \arrow[""{name=2, anchor=center, inner sep=0}, "Fn"', "\shortmid"{marking}, from=2-1, to=2-2]
      \arrow["Gg", from=1-3, to=2-3]
      \arrow[""{name=3, anchor=center, inner sep=0}, "{(\alpha_!)_z}"', "\shortmid"{marking}, from=2-2, to=2-3]
      \arrow[Rightarrow, no head, from=2-3, to=3-3]
      \arrow[Rightarrow, no head, from=2-1, to=3-1]
      \arrow["{(\alpha_!)_n}"{description}, draw=none, from=2-2, to=3-2]
      \arrow["F\gamma"{description}, draw=none, from=0, to=2]
      \arrow["{(\alpha_!)_g}"{description}, draw=none, from=1, to=3]
    \end{tikzcd}.
  \end{equation*}
  So $\alpha_!$ is a well-defined oplax protransformation.

  The unit and counit for the companion pair $(\alpha, \alpha_!)$ in
  $\LaxOpl(\dbl{D},\dbl{E})$ are the modifications
  \begin{equation*}
    \begin{tikzcd}
      F & F \\
      F & G
      \arrow[""{name=0, anchor=center, inner sep=0}, "{\alpha_!}"', "\shortmid"{marking}, from=2-1, to=2-2]
      \arrow["\alpha", from=1-2, to=2-2]
      \arrow[""{name=1, anchor=center, inner sep=0}, "{\id_F}", "\shortmid"{marking}, from=1-1, to=1-2]
      \arrow[Rightarrow, no head, from=1-1, to=2-1]
      \arrow["\eta"{description}, draw=none, from=1, to=0]
    \end{tikzcd}
    \qquad\text{and}\qquad
    \begin{tikzcd}
      F & G \\
      G & G
      \arrow["\alpha"', from=1-1, to=2-1]
      \arrow[""{name=0, anchor=center, inner sep=0}, "{\alpha_!}", "\shortmid"{marking}, from=1-1, to=1-2]
      \arrow[""{name=1, anchor=center, inner sep=0}, "{\id_G}"', "\shortmid"{marking}, from=2-1, to=2-2]
      \arrow[Rightarrow, no head, from=1-2, to=2-2]
      \arrow["\varepsilon"{description}, draw=none, from=0, to=1]
    \end{tikzcd}
  \end{equation*}
  whose components are the units and counits
  \begin{equation*}
    \begin{tikzcd}
      Fx & Fx \\
      Fx & Gx
      \arrow[Rightarrow, no head, from=1-1, to=2-1]
      \arrow[""{name=0, anchor=center, inner sep=0}, "{\id_{Fx}}", "\shortmid"{marking}, from=1-1, to=1-2]
      \arrow["{\alpha_x}", from=1-2, to=2-2]
      \arrow[""{name=1, anchor=center, inner sep=0}, "{(\alpha_x)_!}"', "\shortmid"{marking}, from=2-1, to=2-2]
      \arrow["{\eta_x}"{description}, draw=none, from=0, to=1]
    \end{tikzcd}
    \qquad\text{and}\qquad
    \begin{tikzcd}
      Fx & Gx \\
      Gx & Gx
      \arrow["{\alpha_x}"', from=1-1, to=2-1]
      \arrow[""{name=0, anchor=center, inner sep=0}, "{(\alpha_x)_!}", "\shortmid"{marking}, from=1-1, to=1-2]
      \arrow[Rightarrow, no head, from=1-2, to=2-2]
      \arrow[""{name=1, anchor=center, inner sep=0}, "{\id_{Gx}}"', "\shortmid"{marking}, from=2-1, to=2-2]
      \arrow["{\varepsilon_x}"{description}, draw=none, from=0, to=1]
    \end{tikzcd}
  \end{equation*}
  for the companion pairs $(\alpha_x, (\alpha_x)_!)$ in $\dbl{E}$, for each
  object $x \in \dbl{D}$. Since modifications compose componentwise, the two
  axioms for $\eta$ and $\varepsilon$ follow immediately from those for each
  $\eta_x$ and $\varepsilon_x$, so we just need to show that the modifications
  are well defined.

  We prove the two equivariance axioms for the modification $\eta$; the proofs
  for $\varepsilon$ are dual. The first equivariance axiom states that for every
  arrow $f: x \to y$ in $\dbl{D}$,
  \begin{equation*}
    \begin{tikzcd}
      Fx & Fx \\
      Fx & Gx \\
      Fy & Gy
      \arrow[Rightarrow, no head, from=1-1, to=2-1]
      \arrow[""{name=0, anchor=center, inner sep=0}, "{\id_{Fx}}", "\shortmid"{marking}, from=1-1, to=1-2]
      \arrow["{\alpha_x}", from=1-2, to=2-2]
      \arrow[""{name=1, anchor=center, inner sep=0}, "{(\alpha_x)_!}", "\shortmid"{marking}, from=2-1, to=2-2]
      \arrow["Gf", from=2-2, to=3-2]
      \arrow["Ff"', from=2-1, to=3-1]
      \arrow[""{name=2, anchor=center, inner sep=0}, "{(\alpha_y)_!}"', "\shortmid"{marking}, from=3-1, to=3-2]
      \arrow["{\eta_x}"{description, pos=0.4}, draw=none, from=0, to=1]
      \arrow["{(\alpha_!)_f}"{description}, draw=none, from=1, to=2]
    \end{tikzcd}
    \quad=\quad
    \begin{tikzcd}
      Fx & Fx \\
      Fy & Fy \\
      Fy & Gy
      \arrow[""{name=0, anchor=center, inner sep=0}, "{\id_{Fx}}", "\shortmid"{marking}, from=1-1, to=1-2]
      \arrow[""{name=1, anchor=center, inner sep=0}, "{\id_{Fy}}", "\shortmid"{marking}, from=2-1, to=2-2]
      \arrow["Ff"', from=1-1, to=2-1]
      \arrow["Ff", from=1-2, to=2-2]
      \arrow["{\alpha_y}", from=2-2, to=3-2]
      \arrow[Rightarrow, no head, from=2-1, to=3-1]
      \arrow[""{name=2, anchor=center, inner sep=0}, "{(\alpha_y)_!}"', "\shortmid"{marking}, from=3-1, to=3-2]
      \arrow["{\eta_y}"{description}, draw=none, from=1, to=2]
      \arrow["{\id_{Ff}}"{description, pos=0.4}, draw=none, from=0, to=1]
    \end{tikzcd}.
  \end{equation*}
  Post-composing both sides with counit $\varepsilon_y$ and using the inverse
  correspondence in \cref{eq:sliding-companion-inv} gives the true equation
  \begin{equation*}
    \begin{tikzcd}[row sep=scriptsize]
      Fx & Fx \\
      Fy & Gx \\
      Gy & Gy
      \arrow["Ff"', from=1-1, to=2-1]
      \arrow["{\alpha_y}"', from=2-1, to=3-1]
      \arrow[""{name=0, anchor=center, inner sep=0}, "{\id_{Fx}}", "\shortmid"{marking}, from=1-1, to=1-2]
      \arrow["{\alpha_x}", from=1-2, to=2-2]
      \arrow["Gf", from=2-2, to=3-2]
      \arrow[""{name=1, anchor=center, inner sep=0}, "{\id_{Gy}}"', "\shortmid"{marking}, from=3-1, to=3-2]
      \arrow["\id"{description}, draw=none, from=0, to=1]
    \end{tikzcd}
    \quad=\quad
    \begin{tikzcd}[row sep=scriptsize]
      Fx & Fx \\
      Fy & Fy \\
      Gy & Gy
      \arrow[""{name=0, anchor=center, inner sep=0}, "{\id_{Fx}}", "\shortmid"{marking}, from=1-1, to=1-2]
      \arrow[""{name=1, anchor=center, inner sep=0}, "{\id_{Fy}}", "\shortmid"{marking}, from=2-1, to=2-2]
      \arrow["Ff"', from=1-1, to=2-1]
      \arrow["Ff", from=1-2, to=2-2]
      \arrow["{\alpha_y}", from=2-2, to=3-2]
      \arrow["{\alpha_y}"', from=2-1, to=3-1]
      \arrow[""{name=2, anchor=center, inner sep=0}, "{\id_{Gy}}"', "\shortmid"{marking}, from=3-1, to=3-2]
      \arrow["{\id_{Ff}}"{description, pos=0.4}, draw=none, from=0, to=1]
      \arrow["{\id_{\alpha_y}}"{description}, draw=none, from=1, to=2]
    \end{tikzcd}.
  \end{equation*}
  Therefore, by the universal property possessed by $\varepsilon_y$ as a
  restriction cell, the original equation holds too. The other equivariance
  axiom states that for every proarrow $m: x \proto y$ in $\dbl{D}$,
  \begin{equation*}
    \begin{tikzcd}
      Fx & Fx & Fy \\
      Fx & Gx & Gy
      \arrow[Rightarrow, no head, from=1-1, to=2-1]
      \arrow[""{name=0, anchor=center, inner sep=0}, "{\id_{Fx}}", "\shortmid"{marking}, from=1-1, to=1-2]
      \arrow[""{name=1, anchor=center, inner sep=0}, "Fm", "\shortmid"{marking}, from=1-2, to=1-3]
      \arrow["{\alpha_x}"{description}, from=1-2, to=2-2]
      \arrow[""{name=2, anchor=center, inner sep=0}, "{(\alpha_x)_!}"', "\shortmid"{marking}, from=2-1, to=2-2]
      \arrow["{\alpha_y}", from=1-3, to=2-3]
      \arrow[""{name=3, anchor=center, inner sep=0}, "Gm"', "\shortmid"{marking}, from=2-2, to=2-3]
      \arrow["{\alpha_m}"{description}, draw=none, from=1, to=3]
      \arrow["{\eta_x}"{description}, draw=none, from=0, to=2]
    \end{tikzcd}
    \quad=\quad
    \begin{tikzcd}[row sep=scriptsize]
      Fx & Fy & Fy \\
      Fx & Fy & Gy \\
      Fx & Gx & Gy
      \arrow[""{name=0, anchor=center, inner sep=0}, "Fm", "\shortmid"{marking}, from=1-1, to=1-2]
      \arrow["{(\alpha_x)_!}"', "\shortmid"{marking}, from=3-1, to=3-2]
      \arrow["Gm"', "\shortmid"{marking}, from=3-2, to=3-3]
      \arrow[""{name=1, anchor=center, inner sep=0}, "{\id_{Fy}}", "\shortmid"{marking}, from=1-2, to=1-3]
      \arrow["{\alpha_y}", from=1-3, to=2-3]
      \arrow[Rightarrow, no head, from=1-2, to=2-2]
      \arrow[Rightarrow, no head, from=1-1, to=2-1]
      \arrow[""{name=2, anchor=center, inner sep=0}, "Fm"', "\shortmid"{marking}, from=2-1, to=2-2]
      \arrow[""{name=3, anchor=center, inner sep=0}, "{(\alpha_y)_!}"', "\shortmid"{marking}, from=2-2, to=2-3]
      \arrow[Rightarrow, no head, from=2-1, to=3-1]
      \arrow[Rightarrow, no head, from=2-3, to=3-3]
      \arrow["{(\alpha_!)_m}"{description}, draw=none, from=2-2, to=3-2]
      \arrow["{\eta_y}"{description}, draw=none, from=1, to=3]
      \arrow["{1_{Fm}}"{description}, draw=none, from=0, to=2]
    \end{tikzcd}.
  \end{equation*}
  Post-composing both sides with the cell $\varepsilon_x \odot 1_{Gm}$ and again
  using the inverse correspondence in \cref{eq:sliding-companion-inv} gives the
  true equation $\alpha_m = \alpha_m$. Since $\varepsilon_x \odot 1_{Gm}$ is a
  restriction cell, namely the restriction of $Gm$ along $\alpha_x$ and
  $1_{Gy}$, the universal property of the restriction implies that the original
  equation holds too.

  This completes the proof that $(\alpha, \alpha_!)$ is a companion pair in
  $\LaxOpl(\dbl{D},\dbl{E})$, establishing the harder direction of the first
  equivalence. Conversely, if $(\alpha, \alpha_!)$ is a companion pair in
  $\LaxOpl(\dbl{D},\dbl{E})$ with binding modifications $\eta$ and
  $\varepsilon$, then for each object $x \in \dbl{D}$, the components $\eta_x$
  and $\varepsilon_x$ are immediately seen to make $(\alpha_x, (\alpha_!)_x)$
  into a companion pair in $\dbl{E}$.

  The second equivalence, about conjoints, is dual in view of \cref{rem:duality}
  and the isomorphism
  $\LaxLax(\dbl{D},\dbl{E})^\rev \cong \LaxOpl(\dbl{D}^\rev, \dbl{E}^\rev)$. For the
  sake of concreteness, we note that if the natural transformation $\alpha: F \To G$
  has components with conjoints in $\dbl{E}$, then the conjoint lax
  protransformation $\alpha^*: G \proTo F$ has components
  $(\alpha^*)_x \coloneqq (\alpha_x)^*: Gx \proto Fx$ at each object $x \in \dbl{D}$ and has
  naturality comparisons
  \begin{equation} \label{eq:conjoint-naturality-comparison}
    \begin{tikzcd}
      Fx & Fy \\
      Gx & Gy
      \arrow[""{name=0, anchor=center, inner sep=0}, "Fm", "\shortmid"{marking}, from=1-1, to=1-2]
      \arrow["{\alpha_x}"', from=1-1, to=2-1]
      \arrow["{\alpha_y}", from=1-2, to=2-2]
      \arrow[""{name=1, anchor=center, inner sep=0}, "Gm"', "\shortmid"{marking}, from=2-1, to=2-2]
      \arrow["{\alpha_m}"{description}, draw=none, from=0, to=1]
    \end{tikzcd}
    \qquad\leadsto\qquad
    \begin{tikzcd}
      Gx & Fx & Fy \\
      Gx & Gy & Fy
      \arrow["Fm", "\shortmid"{marking}, from=1-2, to=1-3]
      \arrow["{\alpha_x^*}", "\shortmid"{marking}, from=1-1, to=1-2]
      \arrow["Gm"', "\shortmid"{marking}, from=2-1, to=2-2]
      \arrow["{\alpha_y^*}"', "\shortmid"{marking}, from=2-2, to=2-3]
      \arrow[Rightarrow, no head, from=1-1, to=2-1]
      \arrow[Rightarrow, no head, from=1-3, to=2-3]
      \arrow["{\alpha_m^*}"{description}, draw=none, from=1-2, to=2-2]
    \end{tikzcd}
  \end{equation}
  induced by \cref{lem:sliding-globular} for each proarrow $m: x \proto y$ in
  $\dbl{D}$.
\end{proof}

\begin{corollary}[Companions and conjoints, pseudo case]
  \label{cor:companion-transformation-pseudo}
  A natural transformation $\alpha: F \To G$ between lax double functors
  $F, G: \dbl{D} \to \dbl{E}$ has a companion in $\LaxPs(\dbl{D},\dbl{E})$, which
  is a (pseudo) protransformation $\alpha_!: F \proTo G$, if and only if each
  component arrow $\alpha_x$ has a companion and each component cell $\alpha_m$ is a
  \emph{commuter} (\cref{def:commuters}).

  Dually, the natural transformation $\alpha: F \To G$ has a conjoint in
  $\LaxPs(\dbl{D},\dbl{E})$, which is a protransformation
  $\alpha^*: G \proTo F$, if and only if each component arrow $\alpha_x$ has a
  conjoint and each component cell $\alpha_m$ is a \emph{cocommuter}.
\end{corollary}
\begin{proof}
  This follows immediately from \cref{thm:companion-transformation} and
  \cref{eq:companion-naturality-comparison,eq:conjoint-naturality-comparison}
  defining the naturality comparisons of the (op)lax protransformations.
\end{proof}

The preceding corollary was stated recently as \cite[\mbox{Proposition
  3.10}]{gambino2022}, with only a sketch of a proof. The next corollary is a
further specialization.

\begin{corollary}[Companions and conjoints of natural isomorphisms]
  \label{cor:companion-natural-isomorphism}
  Suppose $\alpha: F \To G$ is a natural \emph{isomorphism} between lax double
  functors $F, G: \dbl{D} \to \dbl{E}$ such that each component $\alpha_x$ has a
  companion. Then $\alpha$ has a companion $\alpha_!: F \proTo G$ in
  $\LaxPs(\dbl{D},\dbl{E})$. If, in addition, each component $\alpha_x$ has a
  conjoint, then $\alpha$ has a conjoint $\alpha^*: G \proTo F$, and the two
  protransformations form an adjoint equivalence $\alpha_! \dashv \alpha^*$ in
  $\LaxPs(\dbl{D},\dbl{E})$.
\end{corollary}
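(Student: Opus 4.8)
The plan is to read both conclusions off \cref{cor:companion-transformation-pseudo} and \cref{lem:isos-are-commuters}, adding a short argument for the adjoint equivalence at the end. Since $\alpha$ is a natural isomorphism it has an inverse $\alpha^{-1}: G \To F$, so each component arrow $\alpha_x$ is an isomorphism and each component cell $\alpha_m$ is an invertible cell of $\dbl{E}$. The hypothesis supplies a companion of each $\alpha_x$, and then \cref{lem:isos-are-commuters} applies to the invertible cell $\alpha_m$, whose side arrows $\alpha_x,\alpha_y$ have companions, to show that each $\alpha_m$ is a commuter. By \cref{cor:companion-transformation-pseudo}, $\alpha$ therefore has a companion $\alpha_!: F \proTo G$ in $\LaxPs(\dbl{D},\dbl{E})$, with components $(\alpha_!)_x = (\alpha_x)_!$ and invertible naturality comparisons. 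If, in addition, each $\alpha_x$ has a conjoint, the dual half of \cref{lem:isos-are-commuters} makes each $\alpha_m$ a cocommuter, so the dual of \cref{cor:companion-transformation-pseudo} produces a conjoint $\alpha^*: G \proTo F$ in $\LaxPs(\dbl{D},\dbl{E})$, with components $(\alpha^*)_x = (\alpha_x)^*$.

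For the adjoint equivalence I would work entirely inside the double category $\LaxPs(\dbl{D},\dbl{E})$. By \cref{rem:mates}, the companion–conjoint pair $(\alpha_!,\alpha^*)$ is in particular an adjunction $\alpha_! \dashv \alpha^*$ in the underlying bicategory, so it suffices to exhibit $\alpha_!$ as an equivalence with quasi-inverse $\alpha^*$. Since $\alpha$ is an isomorphism, \cite[Lemma 3.20]{shulman2010} and its dual show that the conjoint $\alpha^*$ of $\alpha$ is also a companion of $\alpha^{-1}$ (and $\alpha_!$ also a conjoint of $\alpha^{-1}$). Then, by functoriality of companions under external composition (\cite[Lemma 3.13]{shulman2010}, as used in \cref{lem:sliding-functoriality-external-special}) together with the convention that an identity arrow is its own companion, the composite $\alpha_! \odot \alpha^*$ is a companion of $\alpha \cdot \alpha^{-1} = \id_F$, whence $\alpha_! \odot \alpha^* \cong \id_F$; symmetrically $\alpha^* \odot \alpha_! \cong \id_G$. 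So $\alpha_!$ is an equivalence, and an adjunction whose left adjoint is an equivalence is automatically an adjoint equivalence, which gives the claim.

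The step I expect to require the most care is this last clause: confirming that the unit and counit of the \emph{companion–conjoint} adjunction itself, and not merely of some adjoint equivalence on the same $1$-cells, are invertible. One can settle this by uniqueness of adjoints (the adjunction structure on a fixed left adjoint is determined up to unique compatible isomorphism, forcing the unit and counit to match the equivalence ones), or, more concretely, by a componentwise check: as recorded in the proof of \cref{thm:companion-transformation}, the unit and counit modifications of $\alpha_! \dashv \alpha^*$ are assembled componentwise from the binding cells of the companions and conjoints of the $\alpha_x$, i.e.\ at each object $x$ they are the unit and counit of the companion–conjoint adjunction $(\alpha_x)_! \dashv (\alpha_x)^*$ in $\dbl{E}$, which is an adjoint equivalence because $\alpha_x$ is invertible; being invertible at every object, these modifications are invertible.
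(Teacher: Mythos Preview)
Your proof is correct and follows essentially the same approach as the paper: both use \cref{lem:isos-are-commuters} to verify the commuter/cocommuter hypotheses of \cref{cor:companion-transformation-pseudo}, and both deduce the adjoint equivalence from the general fact about companions and conjoints of an isomorphism in a double category. The only difference is that the paper simply cites \cite[\mbox{Lemma 3.21}]{shulman2010} for this last fact (applied in $\LaxPs(\dbl{D},\dbl{E})$), whereas you unpack its proof via \cite[Lemmas 3.13 and 3.20]{shulman2010} and uniqueness of adjoints; your componentwise verification is sound but unnecessary once you have argued abstractly in $\LaxPs(\dbl{D},\dbl{E})$.
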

\begin{proof}
  Since $\alpha$ is a natural isomorphism, each component cell $\alpha_m$ is an
  isomorphism and hence is a commuter by \cref{lem:isos-are-commuters}. Thus,
  $\alpha$ has a companion in $\LaxPs(\dbl{D},\dbl{E})$ by
  \cref{cor:companion-transformation-pseudo}. If, moreover, each component arrow
  $\alpha_x$ has a conjoint, then each component cell $\alpha_m$ is a cocommuter and $\alpha$
  has a conjoint in $\LaxPs(\dbl{D},\dbl{E})$. That $\alpha_!$ and $\alpha^*$ then form an
  adjoint equivalence is a general fact about the companion and the conjoint of
  an isomorphism \cite[\mbox{Lemma 3.21}]{shulman2010}.
\end{proof}

The following lemma about natural transformations is a useful source of commuter
cells. Recall that a lax double functor is \define{normal} if its identity
comparison cells are invertible.

\begin{lemma}[Components at companions are commuters]
  \label{lem:components-at-companions-are-commuters}
  Suppose $\alpha: F \To G$ is a natural transformation between \emph{normal} lax
  double functors $F,G: \dbl{D} \to \dbl{E}$. If $f: x \to y$ is an arrow in
  $\dbl{D}$ with a companion $f_!: x \proto y$, then under sliding
  (\cref{lem:sliding-general}) the component $\alpha_{f_!}$ corresponds to the
  external identity on the naturality square for $f$:
  \begin{equation*}
    \begin{tikzcd}
      Fx & Fy \\
      Gx & Gy
      \arrow[""{name=0, anchor=center, inner sep=0}, "{F(f_!)}", "\shortmid"{marking}, from=1-1, to=1-2]
      \arrow["{\alpha_x}"', from=1-1, to=2-1]
      \arrow["{\alpha_y}", from=1-2, to=2-2]
      \arrow[""{name=1, anchor=center, inner sep=0}, "{G(f_!)}"', "\shortmid"{marking}, from=2-1, to=2-2]
      \arrow["{\alpha_{f_!}}"{description}, draw=none, from=0, to=1]
    \end{tikzcd}
    \qquad\leftrightsquigarrow\qquad
    \begin{tikzcd}[row sep=scriptsize]
      Fx & Fx \\
      Gx & Fy \\
      Gy & Gy
      \arrow["Ff", from=1-2, to=2-2]
      \arrow[""{name=0, anchor=center, inner sep=0}, "{\id_{Fx}}", "\shortmid"{marking}, from=1-1, to=1-2]
      \arrow["{\alpha_y}", from=2-2, to=3-2]
      \arrow["{\alpha_x}"', from=1-1, to=2-1]
      \arrow["Gf"', from=2-1, to=3-1]
      \arrow[""{name=1, anchor=center, inner sep=0}, "{\id_{Gy}}"', "\shortmid"{marking}, from=3-1, to=3-2]
      \arrow["\id"{description}, draw=none, from=0, to=1]
    \end{tikzcd}.
  \end{equation*}
  Moreover, if the components $\alpha_x$ and $\alpha_y$ also have companions, then the
  cell $\alpha_{f_!}$ is a commuter. In fact, the reshaped cell $(\alpha_{f_!})_!$
  (\cref{lem:sliding-globular}) is the canonical isomorphism between companions
  of $Ff \cdot \alpha_y = \alpha_x \cdot Gf$.
\end{lemma}
\begin{proof}
  Only as a convenience, assume that $F$ and $G$ are \define{unitary} lax
  functors, i.e., strictly preserve external identities. So, if $(f, f_!, \eta, \varepsilon)$
  is a companion pair, then its image under $F$ is again a companion pair
  $(Ff, Ff_!, F\eta, F\epsilon)$ without inserting identity comparisons, and likewise for
  the image under $G$. Now calculate
  \begin{equation*}
    \begin{tikzcd}
      Fx & Fx \\
      Fx & Fy \\
      Gx & Gy \\
      Gy & Gy
      \arrow[""{name=0, anchor=center, inner sep=0}, "{F f_!}", "\shortmid"{marking}, from=2-1, to=2-2]
      \arrow["{\alpha_x}"', from=2-1, to=3-1]
      \arrow["{\alpha_y}", from=2-2, to=3-2]
      \arrow[""{name=1, anchor=center, inner sep=0}, "{G f_!}"', "\shortmid"{marking}, from=3-1, to=3-2]
      \arrow[Rightarrow, no head, from=1-1, to=2-1]
      \arrow["Ff", from=1-2, to=2-2]
      \arrow[""{name=2, anchor=center, inner sep=0}, "\shortmid"{marking}, Rightarrow, no head, from=1-1, to=1-2]
      \arrow["Gf"', from=3-1, to=4-1]
      \arrow[Rightarrow, no head, from=3-2, to=4-2]
      \arrow[""{name=3, anchor=center, inner sep=0}, "\shortmid"{marking}, Rightarrow, no head, from=4-1, to=4-2]
      \arrow["{\alpha_{f_!}}"{description}, draw=none, from=0, to=1]
      \arrow["F\eta"{description, pos=0.4}, draw=none, from=2, to=0]
      \arrow["G\varepsilon"{description, pos=0.6}, draw=none, from=1, to=3]
    \end{tikzcd}
    =
    \begin{tikzcd}
      Fx & Fx \\
      Fx & Fy \\
      Fy & Fy \\
      Gy & Gy
      \arrow[""{name=0, anchor=center, inner sep=0}, "{F f_!}", "\shortmid"{marking}, from=2-1, to=2-2]
      \arrow[Rightarrow, no head, from=1-1, to=2-1]
      \arrow["Ff", from=1-2, to=2-2]
      \arrow[""{name=1, anchor=center, inner sep=0}, "\shortmid"{marking}, Rightarrow, no head, from=1-1, to=1-2]
      \arrow[""{name=2, anchor=center, inner sep=0}, "\shortmid"{marking}, Rightarrow, no head, from=4-1, to=4-2]
      \arrow["Ff"', from=2-1, to=3-1]
      \arrow[Rightarrow, no head, from=2-2, to=3-2]
      \arrow[""{name=3, anchor=center, inner sep=0}, "\shortmid"{marking}, Rightarrow, no head, from=3-1, to=3-2]
      \arrow["{\alpha_y}"', from=3-1, to=4-1]
      \arrow["{\alpha_y}", from=3-2, to=4-2]
      \arrow["F\eta"{description, pos=0.4}, draw=none, from=1, to=0]
      \arrow["F\varepsilon"{description}, draw=none, from=0, to=3]
      \arrow["{\alpha_{\id_y}}"{description}, draw=none, from=3, to=2]
    \end{tikzcd}
    =
    \begin{tikzcd}
      Fx & Fx \\
      Fy & Fy \\
      Gy & Gy
      \arrow[""{name=0, anchor=center, inner sep=0}, "\shortmid"{marking}, Rightarrow, no head, from=2-1, to=2-2]
      \arrow["Ff", from=1-2, to=2-2]
      \arrow[""{name=1, anchor=center, inner sep=0}, "\shortmid"{marking}, Rightarrow, no head, from=1-1, to=1-2]
      \arrow[""{name=2, anchor=center, inner sep=0}, "\shortmid"{marking}, Rightarrow, no head, from=3-1, to=3-2]
      \arrow["Ff"', from=1-1, to=2-1]
      \arrow["{\alpha_y}"', from=2-1, to=3-1]
      \arrow["{\alpha_y}", from=2-2, to=3-2]
      \arrow["{F\id_f}"{description}, draw=none, from=1, to=0]
      \arrow["{\alpha_{\id_y}}"{description}, draw=none, from=0, to=2]
    \end{tikzcd}
    =
    \begin{tikzcd}
      Fx & Fx \\
      Fy & Fy \\
      Gy & Gy
      \arrow[""{name=0, anchor=center, inner sep=0}, "\shortmid"{marking}, Rightarrow, no head, from=2-1, to=2-2]
      \arrow["Ff", from=1-2, to=2-2]
      \arrow[""{name=1, anchor=center, inner sep=0}, "\shortmid"{marking}, Rightarrow, no head, from=1-1, to=1-2]
      \arrow[""{name=2, anchor=center, inner sep=0}, "\shortmid"{marking}, Rightarrow, no head, from=3-1, to=3-2]
      \arrow["Ff"', from=1-1, to=2-1]
      \arrow["{\alpha_y}"', from=2-1, to=3-1]
      \arrow["{\alpha_y}", from=2-2, to=3-2]
      \arrow["{\id_{Ff}}"{description}, draw=none, from=1, to=0]
      \arrow["{\id_{\alpha_y}}"{description}, draw=none, from=0, to=2]
    \end{tikzcd}
    = \id_{Ff \cdot \alpha_y}.
  \end{equation*}
  Performing the calculation in the other direction exhibits the right-hand side
  as equal to $\id_{\alpha_x \cdot Gf}$. Either way, we find that $\alpha_{f_!}$
  corresponds under sliding to an external identity.

  Moreover, if $\alpha_x$ and $\alpha_y$ have companions, then the reshaped cell
  $(\alpha_{f_!})_!$ is the canonical isomorphism between companions, as seen by
  comparing with the formula in \cite[\mbox{Lemma 3.8}]{shulman2010}.
\end{proof}

\section{From double categories to bicategories}
\label{sec:double-to-bicat}

Combining results from the previous two sections, we obtain procedures for
transposing structure in double categories. This includes as a special case
transferring structure from a double category to its underlying bicategory. Much
of the work has already been done in \cref{thm:companion-transformation} and its
corollaries by recognizing the companion of a natural transformation between lax
double functors as an oplax or pseudo protransformation. We now simply apply
generalities about companions from \cref{sec:companions} to this situation.

\subsection{Transposing natural transformations}
\label{sec:transposing-transformations}

Given double categories $\dbl{D}$ and $\dbl{E}$, there is a strict double
category $\LaxComp(\dbl{D},\dbl{E})$ having as objects, lax double functors
$\dbl{D} \to \dbl{E}$; as arrows, natural transformations whose component arrows
have companions; as proarrows, arbitrary natural transformations; and as cells
$\inlineCell{F}{G}{H}{K}{\gamma}{\delta}{\alpha}{\beta}{\mu}$, modifications of
the form $\mu: \beta \circ \gamma \Tto \delta \circ \alpha$. Define the strict
double category $\LaxCC(\dbl{D},\dbl{E})$ in the same way, except that its
arrows are natural transformations whose component arrows have companions
\emph{and} whose component cells are commuters.\footnote{Natural transformation
  satisfying both of these conditions are said to have \define{loosely strong
    companions} by Hansen and Shulman \cite[\mbox{Definition
    4.10}]{hansen2019}.}

\begin{theorem}
  \label{thm:transpose-transformation}
  For any double categories $\dbl{D}$ and $\dbl{E}$, there are double functors
  \begin{equation*}
    \LaxComp(\dbl{D},\dbl{E})^\top \to \LaxOpl(\dbl{D},\dbl{E})
    \qquad\text{and}\qquad
    \LaxCC(\dbl{D},\dbl{E})^\top \to \LaxPs(\dbl{D},\dbl{E})
  \end{equation*}
  that act as identities on objects and arrows, and send proarrows (which are
  natural transformations satisfying extra properties) to oplax or pseudo
  protransformations, respectively.
\end{theorem}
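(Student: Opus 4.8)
The plan is to obtain both double functors at one stroke, as components of the counit of the biadjunction in \cref{thm:companions-biadjunction}, by recognizing the two strict double categories in the statement as instances of the 2-functor $\Comp$ applied to double categories of protransformations. Precisely, I would show that
\[
  \LaxComp(\dbl{D},\dbl{E}) = \Comp\bigl(\LaxOpl(\dbl{D},\dbl{E})\bigr)
  \qquad\text{and}\qquad
  \LaxCC(\dbl{D},\dbl{E}) = \Comp\bigl(\LaxPs(\dbl{D},\dbl{E})\bigr),
\]
where $\LaxOpl(\dbl{D},\dbl{E})$ and $\LaxPs(\dbl{D},\dbl{E})$ are the (pseudo) double categories of protransformations. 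Granting these identifications, \cref{thm:companions-biadjunction} applied to the double categories $\LaxOpl(\dbl{D},\dbl{E})$ and $\LaxPs(\dbl{D},\dbl{E})$ supplies, as its counit components, double functors $(\Comp\LaxOpl(\dbl{D},\dbl{E}))^\top \to \LaxOpl(\dbl{D},\dbl{E})$ and $(\Comp\LaxPs(\dbl{D},\dbl{E}))^\top \to \LaxPs(\dbl{D},\dbl{E})$ that are the identity on objects and arrows and send each proarrow to a chosen companion; these are exactly the asserted double functors.

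To verify the first identification I would compare the two strict double categories level by level. On objects, both sides are the lax functors $\dbl{D} \to \dbl{E}$. The arrows of $\Comp(\LaxOpl(\dbl{D},\dbl{E}))$ are those natural transformations $\alpha$ possessing a companion in $\LaxOpl(\dbl{D},\dbl{E})$, and by \cref{thm:companion-transformation} these are precisely the natural transformations whose component arrows have companions in $\dbl{E}$, i.e.\ the arrows of $\LaxComp(\dbl{D},\dbl{E})$; the proarrows on both sides are arbitrary natural transformations. For cells, the definition of $\Comp$ says that a cell of $\Comp(\LaxOpl(\dbl{D},\dbl{E}))$ with vertical boundary $\alpha,\beta$ and horizontal boundary $\gamma,\delta$ is a special cell of $\LaxOpl(\dbl{D},\dbl{E})$, namely a modification bounded by the identity protransformations whose two legs are $\beta\circ\gamma$ and $\delta\circ\alpha$; unwinding \cref{def:modification} in the case of identity protransformations shows that such a modification is exactly a family of special cells $\mu_x$ in $\dbl{E}$ satisfying the internal and external equivariance conditions, which is precisely what the cells $\mu\colon\beta\circ\gamma\Tto\delta\circ\alpha$ of $\LaxComp(\dbl{D},\dbl{E})$ were declared to be. Compositions and units match because in both double categories they are the evident pastings. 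The second identification is the same argument with $\LaxOpl$ replaced by $\LaxPs$, now using \cref{cor:companion-transformation-pseudo} to see that a natural transformation has a companion in $\LaxPs(\dbl{D},\dbl{E})$ exactly when its component arrows have companions and its component cells are commuters, which is the defining condition on the arrows of $\LaxCC(\dbl{D},\dbl{E})$.

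With the identifications in hand, the required double functors are the counits $\varepsilon_{\LaxOpl(\dbl{D},\dbl{E})}$ and $\varepsilon_{\LaxPs(\dbl{D},\dbl{E})}$, which by the explicit description in \cref{thm:companions-biadjunction} fix objects and arrows, send each proarrow $\alpha$ to a chosen companion — an oplax protransformation by \cref{thm:companion-transformation}, respectively a pseudo one by \cref{cor:companion-transformation-pseudo} — and act on cells by sliding (\cref{lem:sliding-general}); they are pseudo rather than strict only because of the canonical isomorphisms between a companion of a composite and the composite of the companions. Should one prefer a self-contained argument, the same functor can be built by hand, setting $\alpha \mapsto \alpha_!$ on proarrows via \cref{thm:companion-transformation} and reshaping each component $\mu_x$ of a cell via \cref{lem:sliding-general}, with functoriality following from the functoriality-of-sliding lemmas exactly as in the proof of \cref{thm:companions-biadjunction}. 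Either way, the only genuine work — and the step I expect to require the most care — is the bookkeeping of the previous paragraph, checking that the cells and the compositions of $\LaxComp(\dbl{D},\dbl{E})$ really do coincide with those of $\Comp(\LaxOpl(\dbl{D},\dbl{E}))$ (and likewise in the pseudo case); everything past that point is formal.
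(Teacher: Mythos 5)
Your proposal is correct and follows essentially the same route as the paper: identify $\LaxComp(\dbl{D},\dbl{E}) = \Comp(\LaxOpl(\dbl{D},\dbl{E}))$ and $\LaxCC(\dbl{D},\dbl{E}) = \Comp(\LaxPs(\dbl{D},\dbl{E}))$ via \cref{thm:companion-transformation} and \cref{cor:companion-transformation-pseudo}, then take the counit components of the biadjunction in \cref{thm:companions-biadjunction}. You merely spell out in more detail the level-by-level check (especially on cells) that the paper leaves implicit.
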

\begin{proof}
  Using the biadjunction $(-)^\top \dashv \Comp$ in
  \cref{thm:companions-biadjunction}, the two components of the counit at the
  double categories $\LaxOpl(\dbl{D},\dbl{E})$ and $\LaxPs(\dbl{D},\dbl{E})$ are
  double functors
  \begin{equation*}
    \Comp(\LaxOpl(\dbl{D},\dbl{E}))^\top \to \LaxOpl(\dbl{D},\dbl{E})
    \qquad\text{and}\qquad
    \Comp(\LaxPs(\dbl{D},\dbl{E}))^\top \to \LaxPs(\dbl{D},\dbl{E})
  \end{equation*}
  with the claimed properties. But \cref{thm:companion-transformation} and
  \cref{cor:companion-transformation-pseudo} say precisely that
  \begin{equation*}
    \Comp(\LaxOpl(\dbl{D},\dbl{E})) = \LaxComp(\dbl{D},\dbl{E})
    \qquad\text{and}\qquad
    \Comp(\LaxPs(\dbl{D},\dbl{E})) = \LaxCC(\dbl{D},\dbl{E}).
    \qedhere
  \end{equation*}
\end{proof}

Deducing a result about bicategories is, from a high level, as simple as passing
from double categories to their underlying bicategories. But this procedure is
not without subtleties since, notoriously, there is no 2-category (or
3-category) of bicategories, lax functors, and lax, oplax, or pseudo natural
transformations \cite{shulman2009}. \emph{A fortiori}, there is no 2-category of
double categories, lax functors, and lax, oplax, or pseudo protransformations,
and so there can be no forgetful 2-functor from the latter to the former. Let us
take a closer look at the passage from double categories to bicategories.

\begin{construction}[Underlying bicategory]
  \label{def:underlying-bicategory}
  First of all, there \emph{are} mere categories $\DblLaxOne$ and $\BicatLaxOne$
  that have double categories and bicategories as objects, respectively, and
  have lax functors as morphisms. The \define{underlying bicategory} functor
  \begin{equation*}
    \UBicat: \DblLaxOne \to \BicatLaxOne
  \end{equation*}
  sends
  \begin{itemize}
    \item each (pseudo) double category $\dbl{D}$ to its underlying bicategory
      $\UBicat(\dbl{D})$, comprising the objects, proarrows, and globular cells
      of $\dbl{D}$; and
    \item each lax double functor to the lax functor between underlying
      bicategories that has the same action on objects, proarrows, and globular
      cells and the same comparison cells.
  \end{itemize}
  This forgetful functor restricts on pseudofunctors to a functor
  $\UBicat: \DblOne \to \BicatOne$; see, for instance, \cite[\mbox{Theorem
    4.1}]{shulman2010}.

  Turning to protransformations, notice that the double-categorical notion of a
  lax protransformation generalizes the bicategorical notion of a lax natural
  transformation. To be more precise, when bicategories $\bicat{B}$ and
  $\bicat{C}$ are regarded as double categories with only identity arrows, a lax
  or oplax protransformation (\cref{def:lax-protransformation}) between lax
  functors $F,G: \bicat{B} \to \bicat{C}$ is exactly a lax or oplax natural
  transformation between $F,G$, as defined in bicategory theory
  \cite[\S\S{4.2--4.3}]{johnson2021}. Moreover, a modification between
  protransformations (\cref{def:modification}) with identity source and target
  is precisely a modification in the bicategorical sense
  \cite[\S{4.4}]{johnson2021}.

  Conversely, for fixed double categories $\dbl{D}$ and $\dbl{E}$, there are
  forgetful 2-functors
  \begin{equation*}
    \UBicat: \begin{cases}
      \LaxLaxTwo(\dbl{D},\dbl{E}) \to
        \LaxLaxTwo(\UBicat(\dbl{D}),\UBicat(\dbl{E})) \\
      \LaxOplTwo(\dbl{D},\dbl{E}) \to
        \LaxOplTwo(\UBicat(\dbl{D}),\UBicat(\dbl{E})) \\
      \LaxPsTwo(\dbl{D},\dbl{E}) \to
        \LaxPsTwo(\UBicat(\dbl{D}),\UBicat(\dbl{E}))
    \end{cases}
  \end{equation*}
  from the 2-categories of lax double functors $\dbl{D} \to \dbl{E}$, (lax, oplax,
  or pseudo) protransformations, and modifications to the 2-categories of lax
  functors $\UBicat(\dbl{D}) \to \UBicat(\dbl{E})$, (lax, oplax, or pseudo)
  natural transformations, and modifications. The action on lax functors is as
  above, whereas the action on protransformations is simply to forget the
  component cells, while keeping the component proarrows and the naturality
  comparisons.
\end{construction}

We need a final bit of notation to state the next result. Given double
categories $\dbl{D}$ and $\dbl{E}$, let $\LaxCompTwo(\dbl{D},\dbl{E})$ be the
2-category of lax double functors $\dbl{D} \to \dbl{E}$, natural transformations
whose component arrows have companions, and modifications. Let
$\LaxCCTwo(\dbl{D},\dbl{E})$ be the 2-category with the same objects and cells,
but with morphisms being the natural transformations whose component arrows have
companions \emph{and} whose component cells are commuters.

\begin{corollary}
  For any double categories $\dbl{D}$ and $\dbl{E}$, there are pseudo\-functors
  \begin{equation*}
    \LaxCompTwo(\dbl{D},\dbl{E})^\co \to
      \LaxOplTwo(\UBicat(\dbl{D}),\UBicat(\dbl{E}))
    \qquad\text{and}\qquad
    \LaxCCTwo(\dbl{D},\dbl{E})^\co \to
      \LaxPsTwo(\UBicat(\dbl{D}),\UBicat(\dbl{E}))
  \end{equation*}
  that send
  \begin{itemize}[nosep]
    \item lax double functors $\dbl{D} \to \dbl{E}$ to the lax functors between
      the underlying bicategories;
    \item natural transformations (satisfying extra properties) to oplax or
      pseudo natural transformations, respectively; and
    \item modifications to modifications, reversing the orientation.
  \end{itemize}
\end{corollary}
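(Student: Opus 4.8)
The plan is to obtain both pseudofunctors by applying the underlying-bicategory construction to the transposition double functors of \cref{thm:transpose-transformation}. Write $P\colon \LaxComp(\dbl{D},\dbl{E})^\top \to \LaxOpl(\dbl{D},\dbl{E})$ and $Q\colon \LaxCC(\dbl{D},\dbl{E})^\top \to \LaxPs(\dbl{D},\dbl{E})$ for those double functors. Being counit components of the biadjunction of \cref{thm:companions-biadjunction}, they are \emph{pseudo} double functors, so applying the functor $\UBicat\colon \DblOne \to \BicatOne$ of \cref{def:underlying-bicategory} produces pseudofunctors of bicategories $\UBicat(P)$ and $\UBicat(Q)$. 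Everything after this is a matter of recognizing the source and target bicategories of these two pseudofunctors.

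First I would identify the sources. For a strict double category $\mathbb{X}$, the underlying bicategory of the transpose $\mathbb{X}^\top$ is a strict $2$-category: its objects and $1$-cells are the objects and arrows of $\mathbb{X}$, and its $2$-cells are the globular cells of $\mathbb{X}^\top$, which are exactly the cells of $\mathbb{X}$ having identity proarrows on top and bottom. Specializing to $\mathbb{X} = \LaxComp(\dbl{D},\dbl{E}) = \Comp(\LaxOpl(\dbl{D},\dbl{E}))$ and unwinding the $\Comp$ construction, such a cell bounded by natural transformations with companion component arrows is precisely a modification between those natural transformations; comparing orientation conventions — the cells of $\Comp$ being modifications $\mu\colon \beta\circ\gamma \Tto \delta\circ\alpha$ — identifies this $2$-category with $\LaxCompTwo(\dbl{D},\dbl{E})^\co$, and the $(-)^\co$ here is exactly the orientation reversal recorded in the statement. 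The identical computation gives $\UBicat(\LaxCC(\dbl{D},\dbl{E})^\top) = \LaxCCTwo(\dbl{D},\dbl{E})^\co$.

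Next I would identify the targets and finish. The underlying bicategory of the double category $\LaxOpl(\dbl{D},\dbl{E})$ has lax double functors as objects, oplax protransformations as $1$-cells, and modifications as $2$-cells, hence equals the bicategory $\LaxOplTwo(\dbl{D},\dbl{E})$; likewise $\UBicat(\LaxPs(\dbl{D},\dbl{E})) = \LaxPsTwo(\dbl{D},\dbl{E})$. Post-composing $\UBicat(P)$ and $\UBicat(Q)$ with the forgetful $2$-functors $\LaxOplTwo(\dbl{D},\dbl{E}) \to \LaxOplTwo(\UBicat\dbl{D},\UBicat\dbl{E})$ and $\LaxPsTwo(\dbl{D},\dbl{E}) \to \LaxPsTwo(\UBicat\dbl{D},\UBicat\dbl{E})$ of \cref{def:underlying-bicategory} then yields the claimed pseudofunctors, since the composite of a pseudofunctor with a $2$-functor is a pseudofunctor. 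Chasing an object, a $1$-cell, and a $2$-cell through the composite recovers the three bullet points: a lax double functor $F$ goes to $\UBicat(F)$; a natural transformation with the stated properties goes, by \cref{thm:companion-transformation} or \cref{cor:companion-transformation-pseudo}, to its companion oplax or pseudo protransformation and then to the underlying oplax or pseudo natural transformation; and a modification goes to its components, reread as a bicategorical modification in the reversed orientation.

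I expect the only delicate step to be the identification of the sources in the second paragraph: matching the cells of $\Comp(\LaxOpl(\dbl{D},\dbl{E}))$ with modifications and keeping track of orientations carefully enough that the $(-)^\co$ comes out on the correct side. There is no genuine mathematical obstacle beyond this bookkeeping, since \cref{thm:transpose-transformation} — resting on the companion constructions of \cref{thm:companion-transformation} and \cref{cor:companion-transformation-pseudo} — already supplies all the substance.
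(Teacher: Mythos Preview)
Your proposal is correct and follows essentially the same route as the paper: apply $\UBicat$ to the double functors of \cref{thm:transpose-transformation} to obtain pseudofunctors $\LaxCompTwo(\dbl{D},\dbl{E})^\co \to \LaxOplTwo(\dbl{D},\dbl{E})$ and $\LaxCCTwo(\dbl{D},\dbl{E})^\co \to \LaxPsTwo(\dbl{D},\dbl{E})$, then post-compose with the forgetful $2$-functors of \cref{def:underlying-bicategory}. Your unpacking of why the sources come out as $(-)^\co$ is more explicit than the paper's terse statement, but the argument is the same.
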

\begin{proof}
  Applying the forgetful functor $\UBicat: \DblOne \to \BicatOne$ to the double
  functors from \cref{thm:transpose-transformation} gives pseudofunctors
  \begin{equation*}
    \LaxCompTwo(\dbl{D},\dbl{E})^\co \to \LaxOplTwo(\dbl{D},\dbl{E})
    \qquad\text{and}\qquad
    \LaxCCTwo(\dbl{D},\dbl{E})^\co \to \LaxPsTwo(\dbl{D},\dbl{E}).
  \end{equation*}
  To complete the proof, post-compose these with the forgetful 2-functors
  \begin{equation*}
    \LaxOplTwo(\dbl{D},\dbl{E}) \xto{\UBicat}
      \LaxOplTwo(\UBicat(\dbl{D}),\UBicat(\dbl{E}))
    \qquad\text{and}\qquad
    \LaxPsTwo(\dbl{D},\dbl{E}) \xto{\UBicat}
      \LaxPsTwo(\UBicat(\dbl{D}),\UBicat(\dbl{E})).
    \qedhere
  \end{equation*}
\end{proof}

The corollary slightly strengthens Hansen and Shulman's \cite[\mbox{Theorem
  4.6}]{hansen2019}, which is proved directly without passing through
protransformations; see also Shulman's earlier \cite[\mbox{Theorem
  4.6}]{shulman2010}. We think the abstract perspective offered here is valuable
even when the extra flexibility of our \cref{thm:transpose-transformation} is
not needed.

\subsection{Transposing adjunctions}
\label{sec:transposing-adjunctions}

If natural transformations between double functors can be transposed, it stands
to reason that double adjunctions can be too. And they can, but not without a
few subtleties. First, since the companion of a natural transformation is
generally an \emph{oplax} protransformation, we cannot have an ordinary
biadjunction but must have something looser. There are many such notions,
depending on whether the transformations are lax, oplax, or pseudo and whether
the modifications are invertible, but they are all described by the same axioms,
due to Gray \cite[\S{I.7}]{gray1974}. We call all such situations \define{lax
  adjunctions} and will make clear from context what kinds of cells are
involved.

\begin{theorem} \label{thm:transpose-adjunction} Suppose
  $(\eta,\varepsilon): F \dashv G: \dbl{D} \tofrom \dbl{E}$ is a double adjunction such that $F$
  and $G$ are pseudo and the component arrows of the unit and counit
  \begin{equation*}
    \eta: 1_{\dbl{D}} \To G \circ F
    \qquad\text{and}\qquad
    \varepsilon: F \circ G \To 1_{\dbl{E}}
  \end{equation*}
  have companions in $\dbl{D}$ and $\dbl{E}$, respectively. Then this data
  extends to a lax adjunction
  \begin{equation*}
    (\eta_!,\varepsilon_!,s,t): F \dashv G: \dbl{D} \tofrom \dbl{E}
  \end{equation*}
  comprising double functors, oplax protransformations, and invertible
  modifications.

  If, moreover, the component cells of the unit and counit are commuters, then
  there is a biadjunction comprising double functors, protransformations, and
  invertible modfications.
\end{theorem}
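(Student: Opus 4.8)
The plan is to assemble the lax adjunction in stages, postponing the coherence check to the end. First, since the component arrows of $\eta$ and $\varepsilon$ have companions, \cref{thm:companion-transformation} produces oplax protransformations $\eta_!: 1_{\dbl{D}} \proTo G \circ F$ in $\LaxOpl(\dbl{D},\dbl{D})$ and $\varepsilon_!: F \circ G \proTo 1_{\dbl{E}}$ in $\LaxOpl(\dbl{E},\dbl{E})$, each unique up to canonical isomorphism and equipped with its binding modifications. Next, I would record that forming companions is compatible with whiskering: pre-whiskering with a lax functor is a strict double functor (\cref{lem:pre-whiskering-functoriality}) and post-whiskering with the \emph{pseudo} functors $F$ and $G$ is a double functor (\cref{lem:post-whiskering-functoriality}), and since double functors carry companion pairs to companion pairs, $F * \eta_!$, $\varepsilon_! * F$, $\eta_! * G$, and $G * \varepsilon_!$ are companions of the natural transformations $F * \eta$, $\varepsilon * F$, $\eta * G$, and $G * \varepsilon$. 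By uniqueness of companions, each agrees, up to canonical isomorphism, with the oplax protransformation that \cref{thm:companion-transformation} assigns directly to the corresponding whiskered natural transformation.

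Having the data of $\eta_!$ and $\varepsilon_!$, I would then transpose the two triangle identities. All the natural transformations just named have component arrows with companions, so they are arrows of the strict double categories $\LaxComp(\dbl{D},\dbl{E})$ and $\LaxComp(\dbl{E},\dbl{D})$, hence proarrows of their transposes. The transposition double functors of \cref{thm:transpose-transformation}, being pseudo, carry an external composite of such proarrows to the external composite of the associated oplax protransformations, up to the invertible compositor, and carry an identity proarrow to the identity protransformation, up to the unitor. Applied to the triangle identity $(F * \eta) \odot (\varepsilon * F) = 1_F$, regarded as an identity of proarrows in $\LaxComp(\dbl{D},\dbl{E})^\top$ (where $1_F$ is the transpose of the identity natural transformation of $F$), this yields an invertible modification $(F * \eta)_! \odot (\varepsilon * F)_! \cong \id_F$; composing with the whiskering isomorphisms from the previous paragraph gives an invertible modification $s: \id_F \cong (F * \eta_!) \odot (\varepsilon_! * F)$ in $\LaxOpl(\dbl{D},\dbl{E})$. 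Running the identical argument on the other triangle identity $(\eta * G) \odot (G * \varepsilon) = 1_G$ in $\LaxComp(\dbl{E},\dbl{D})^\top$ produces the partner invertible modification $t: (\eta_! * G) \odot (G * \varepsilon_!) \cong \id_G$ in $\LaxOpl(\dbl{E},\dbl{D})$. Thus $(\eta_!,\varepsilon_!,s,t)$ is the candidate data, consisting of double functors, oplax protransformations, and invertible modifications.

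It remains to check that $s$ and $t$ obey the two coherence axioms of a lax adjunction in the sense of Gray \cite[\S{I.7}]{gray1974}; this is the main obstacle. My approach is to reduce each axiom to an identity of canonical isomorphisms between companions. Both sides of each axiom are pasted entirely out of compositors and unitors of the transposition double functors, the structure isomorphisms of the whiskering double functors of \cref{lem:pre-whiskering-functoriality,lem:post-whiskering-functoriality}, and canonical isomorphisms between companions; by the universal property of companions---companions being unique up to canonical isomorphism---any two such pastings with a common boundary coincide, so the verification reduces to checking that the boundaries match. Pointwise, the component of each axiom at an object $x$ is an equation between canonical isomorphisms of companions of one and the same arrow in $\dbl{E}$ or $\dbl{D}$, ultimately forced by the strict triangle equalities $\varepsilon_{Fx} \circ F(\eta_x) = 1_{Fx}$ and $G(\varepsilon_x) \circ \eta_{Gx} = 1_{Gx}$ together with the uniqueness of companions. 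This is precisely the bookkeeping that the biadjunction of \cref{thm:companions-biadjunction} is designed to absorb; conceptually, a $2$-categorical adjunction between double categories is being pushed forward along the companion-transposing operation, which preserves adjunctions up to coherent isomorphism, so the calculation goes through mechanically.

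For the strengthening, suppose moreover that the component cells of $\eta$ and $\varepsilon$ are commuters. Then \cref{cor:companion-transformation-pseudo} upgrades $\eta_!$ and $\varepsilon_!$ to \emph{pseudo} protransformations, i.e.\ proarrows of $\LaxPs(\dbl{D},\dbl{D})$ and $\LaxPs(\dbl{E},\dbl{E})$. Since $F$ and $G$ are pseudo, post-whiskering with them preserves invertibility of the naturality comparisons, and pre-whiskering does so trivially, so every whiskered protransformation and every external composite appearing above remains pseudo. Repeating the construction verbatim with $\LaxPs$ in place of $\LaxOpl$ and with the second double functor of \cref{thm:transpose-transformation}, whose domain is $\LaxCC(\dbl{D},\dbl{E})^\top$, in place of the first, produces the same data now consisting of pseudo protransformations---which is precisely a biadjunction of double functors, protransformations, and invertible modifications.
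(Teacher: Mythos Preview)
Your proposal is correct and follows essentially the same route as the paper. Both arguments obtain $\eta_!$ and $\varepsilon_!$ from \cref{thm:companion-transformation}, use the whiskering functors of \cref{lem:pre-whiskering-functoriality,lem:post-whiskering-functoriality} to identify whiskered companions with companions of whiskered transformations, extract the triangulators $s,t$ as canonical companion isomorphisms, and then verify the swallowtail identities by observing that every cell involved is a canonical isomorphism between companions of the same arrow. The only cosmetic difference is that you package the construction of $s$ and $t$ via the transposition double functor of \cref{thm:transpose-transformation}, whereas the paper appeals directly to the essential uniqueness and functoriality of companions; these are the same thing, since the compositors and unitors of that double functor \emph{are} precisely those canonical isomorphisms.

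One point worth making explicit in your swallowtail argument: among the cells in the pasting is the naturality comparison $(\eta_!)_{(\eta_x)_!}$ of the oplax protransformation $\eta_!$ at the companion proarrow $(\eta_x)_!$, and it is not automatic that this is a canonical companion isomorphism. The paper handles this with \cref{lem:components-at-companions-are-commuters}, which shows that the component of a natural transformation at a companion proarrow reshapes to exactly the canonical isomorphism between the two induced companions; you gesture at this with the appeal to \cref{thm:companions-biadjunction}, but citing the lemma would close the loop.
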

\begin{proof}
  By assumption, the triangle identities
  \begin{equation*}
    \begin{tikzcd}
      F & {F \circ G \circ F} \\
      & F
      \arrow["{1_F}"', from=1-1, to=2-2]
      \arrow["{F*\eta}", from=1-1, to=1-2]
      \arrow["{\varepsilon*F}", from=1-2, to=2-2]
    \end{tikzcd}
    \qquad\text{and}\qquad
    \begin{tikzcd}
      G & {G \circ F \circ G} \\
      & G
      \arrow["{\eta*G}", from=1-1, to=1-2]
      \arrow["{G*\varepsilon}", from=1-2, to=2-2]
      \arrow["{1_G}"', from=1-1, to=2-2]
    \end{tikzcd}
  \end{equation*}
  hold on the nose and, by \cref{thm:companion-transformation}, the unit and
  counit have companions $\eta_!$ and $\varepsilon_!$ in
  $\LaxOpl(\dbl{D},\dbl{D})$ and $\LaxOpl(\dbl{E},\dbl{E})$, respectively. Now,
  since double functors preserve companions,
  \cref{lem:pre-whiskering-functoriality} implies that the pre-whiskerings
  $\eta_! * G$ and $\varepsilon_! * F$ of companions are companions of the pre-whiskerings
  $\eta * G$ and $\varepsilon * F$. Similarly, by \cref{lem:post-whiskering-functoriality},
  the post-whiskerings $F * \eta_!$ and $G * \varepsilon_!$ of companions are companions of
  the post-whiskerings $F * \eta$ and $G * \varepsilon$. Thus, there are unique globular
  isomorphisms
  \begin{equation*}
    \begin{tikzcd}
      F & {F \circ G \circ F} \\
      & F
      \arrow[""{name=0, anchor=center, inner sep=0}, "{1_F}"', "\shortmid"{marking}, from=1-1, to=2-2]
      \arrow["{F * \eta_!}", "\shortmid"{marking}, from=1-1, to=1-2]
      \arrow["{\varepsilon_! * F}", "\shortmid"{marking}, from=1-2, to=2-2]
      \arrow["s", shorten <=2pt, Rightarrow, from=0, to=1-2]
    \end{tikzcd}
    \qquad\text{and}\qquad
    \begin{tikzcd}
      G & {G \circ F \circ G} \\
      & G
      \arrow["{\eta_! * G}", "\shortmid"{marking}, from=1-1, to=1-2]
      \arrow["{G * \varepsilon_!}", "\shortmid"{marking}, from=1-2, to=2-2]
      \arrow[""{name=0, anchor=center, inner sep=0}, "{1_G}"', "\shortmid"{marking}, from=1-1, to=2-2]
      \arrow["t"', shorten >=2pt, Rightarrow, from=1-2, to=0]
    \end{tikzcd},
  \end{equation*}
  the \define{triangulators}, witnessing the functoriality and essential
  uniqueness of companions \cite[\mbox{Lemmas 3.8} and
  \mbox{3.12-13}]{shulman2010}.

  It remains to show that the triangulators satisfy the two coherence axioms of
  a lax adjunction \cite[\mbox{Definition I.7.1}]{gray1974}, sometimes called
  the \define{swallowtail identities} \cite{baez2003}. The first of these is
  \begin{equation*}
    \begin{tikzcd}
      {1_{\dbl{D}}} && {G \circ F} \\
      {G \circ F} && {G \circ F \circ G \circ F} \\
      &&& {G \circ F}
      \arrow["{\eta_!}", "\shortmid"{marking}, from=1-1, to=1-3]
      \arrow["{\eta_!}"', "\shortmid"{marking}, from=1-1, to=2-1]
      \arrow["{\eta_!*GF}"', "\shortmid"{marking}, from=2-1, to=2-3]
      \arrow["{GF*\eta_!}"', "\shortmid"{marking}, from=1-3, to=2-3]
      \arrow["{(\eta_!)_{\eta_!}}"', shorten <=23pt, shorten >=23pt, Rightarrow, from=1-3, to=2-1]
      \arrow[""{name=0, anchor=center, inner sep=0}, "1"', "\shortmid"{marking}, curve={height=24pt}, from=2-1, to=3-4]
      \arrow["{G * \varepsilon_! * F}"{description}, from=2-3, to=3-4]
      \arrow[""{name=1, anchor=center, inner sep=0}, "1", "\shortmid"{marking}, curve={height=-30pt}, from=1-3, to=3-4]
      \arrow["{t*F}"', shorten <=8pt, shorten >=8pt, Rightarrow, from=2-3, to=0]
      \arrow["{G*s}", shorten <=8pt, shorten >=8pt, Rightarrow, from=1, to=2-3]
    \end{tikzcd}
    \quad=\quad
    1_{\eta_!},
  \end{equation*}
  which becomes possibly more intelligible when expressed in components as
  \begin{equation*}
    \begin{tikzcd}
      x && GFx \\
      GFx && GFGFx \\
      &&& GFx
      \arrow["{(\eta_x)_!}", "\shortmid"{marking}, from=1-1, to=1-3]
      \arrow["{(\eta_x)_!}"', "\shortmid"{marking}, from=1-1, to=2-1]
      \arrow["{(\eta_{GFx})_!}"', "\shortmid"{marking}, from=2-1, to=2-3]
      \arrow["{GF(\eta_x)_!}"', "\shortmid"{marking}, from=1-3, to=2-3]
      \arrow["{(\eta_{(\eta_x)_!})_!}"', shorten <=20pt, shorten >=20pt, Rightarrow, from=1-3, to=2-1]
      \arrow[""{name=0, anchor=center, inner sep=0}, "1"', "\shortmid"{marking}, curve={height=24pt}, from=2-1, to=3-4]
      \arrow["{G (\varepsilon_{Fx})_!}"{description}, from=2-3, to=3-4]
      \arrow[""{name=1, anchor=center, inner sep=0}, "1", "\shortmid"{marking}, curve={height=-30pt}, from=1-3, to=3-4]
      \arrow["{t_{Fx}}"', shorten <=8pt, shorten >=8pt, Rightarrow, from=2-3, to=0]
      \arrow["{Gs_x}", shorten <=9pt, shorten >=9pt, Rightarrow, from=1, to=2-3]
    \end{tikzcd}
    \quad=\quad
    1_{(\eta_x)_!},
    \qquad x \in \dbl{D}.
  \end{equation*}
  Either by \cref{lem:components-at-companions-are-commuters} or by
  construction, all of these globular cells in $\dbl{E}$ are canonical
  isomorphisms between choices of companions, hence the equation holds by the
  uniqueness of those isomorphisms. The other swallowtail identity is proved
  analogously.

  Under the further assumption that the unit $\eta$ and counit $\varepsilon$
  have component cells that are commuters, their companions $\eta_!$ and
  $\varepsilon_!$ are pseudo protransformations by
  \cref{cor:companion-transformation-pseudo} and we obtain a genuine
  biadjunction.
\end{proof}

\begin{corollary} \label{cor:transpose-adjunction}
  Suppose $(\eta,\varepsilon): F \dashv G: \dbl{D} \tofrom \dbl{E}$ is a double adjunction such
  that $F$ and $G$ are pseudo and $\eta$ and $\varepsilon$ have component arrows with
  companions. Then, denoting the underlying bicategories of $\dbl{D}$ and
  $\dbl{E}$ by $\bicat{D}$ and $\bicat{E}$, there is a lax adjunction
  $(\eta_!,\varepsilon_!,s,t): F \dashv G: \bicat{D} \tofrom \bicat{E}$ comprising pseudofunctors,
  oplax natural transformations, and invertible modifications.

  If, moreover, $\eta$ and $\varepsilon$ have component cells that are commuters, there is a
  biadjunction comprising pseudofunctors, pseudo natural transformations, and
  invertible modifications.
\end{corollary}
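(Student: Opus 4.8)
The plan is to deduce this corollary from \cref{thm:transpose-adjunction} by transporting the double-categorical lax adjunction it produces to the underlying bicategories along the forgetful functors of \cref{def:underlying-bicategory}. First I would apply \cref{thm:transpose-adjunction}: since $F$ and $G$ are pseudo and $\eta$, $\varepsilon$ have component arrows with companions, it yields a lax adjunction $(\eta_!,\varepsilon_!,s,t): F \dashv G: \dbl{D} \tofrom \dbl{E}$ of double functors, oplax protransformations, and invertible modifications, in which $\eta_!$ and $\varepsilon_!$ are the companions of $\eta$ and $\varepsilon$ in $\LaxOpl(\dbl{D},\dbl{D})$ and $\LaxOpl(\dbl{E},\dbl{E})$ furnished by \cref{thm:companion-transformation}.

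Next I would push this diagram forward. Since there is no 2-category of double categories, lax functors, and (op)lax protransformations — and likewise none of bicategories, lax functors, and (op)lax natural transformations — one cannot invoke a single forgetful 2-functor; instead one uses the package assembled in \cref{def:underlying-bicategory}. The functor $\UBicat: \DblOne \to \BicatOne$ sends the pseudofunctors $F$, $G$ and all of their composites $GF$, $FGF$, $GFG$ to pseudofunctors between $\bicat{D}$ and $\bicat{E}$. The fixed-pair forgetful 2-functors $\UBicat: \LaxOplTwo(\dbl{D},\dbl{D}) \to \LaxOplTwo(\bicat{D},\bicat{D})$ and $\UBicat: \LaxOplTwo(\dbl{E},\dbl{E}) \to \LaxOplTwo(\bicat{E},\bicat{E})$ send $\eta_!$ and $\varepsilon_!$ to oplax natural transformations with the same component proarrows and naturality comparisons (which is why writing $\eta_!$, $\varepsilon_!$ in both settings is unambiguous), and send the triangulators $s$, $t$ to invertible modifications. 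Every whiskering occurring in the data and axioms of the lax adjunction is by one of the \emph{pseudo}functors $F$, $G$, $GF$, so each such pre- or post-whiskering is defined both double-categorically and bicategorically.

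The one point needing care is that the two swallowtail identities defining a lax adjunction \cite{gray1974} — equations between modifications assembled from $s$, $t$, the naturality comparisons of $\eta_!$ and $\varepsilon_!$, and whiskerings by the pseudofunctors $F$, $G$, $GF$ — are preserved by $\UBicat$. This holds because each fixed-pair $\UBicat$ is a 2-functor, hence preserves vertical and horizontal composition of protransformations and of modifications, and is strictly compatible with pre- and post-whiskering by lax double functors: unwinding \cref{def:underlying-bicategory}, both $\UBicat(\sigma*H)$ and $(\UBicat\sigma)*(\UBicat H)$ are the oplax natural transformation with components $\sigma_{Hx}$ and comparisons $\sigma_{Hm}$, and dually for post-whiskering, because $\UBicat$ merely discards the tight data while leaving the loose data and the comparison cells untouched (compare \cref{lem:pre-whiskering-functoriality} and \cref{lem:post-whiskering-functoriality}). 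Hence the double-categorical swallowtail identities from \cref{thm:transpose-adjunction} map term by term to the bicategorical ones, and $(\eta_!,\varepsilon_!,s,t): F \dashv G: \bicat{D} \tofrom \bicat{E}$ is a lax adjunction of pseudofunctors, oplax natural transformations, and invertible modifications.

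For the strengthened statement, if in addition the component cells of $\eta$ and $\varepsilon$ are commuters, then \cref{thm:transpose-adjunction} (via \cref{cor:companion-transformation-pseudo}) upgrades $\eta_!$, $\varepsilon_!$ to pseudo protransformations and the double-categorical lax adjunction to a biadjunction; applying the forgetful 2-functors $\UBicat: \LaxPsTwo(\dbl{D},\dbl{D}) \to \LaxPsTwo(\bicat{D},\bicat{D})$ and $\UBicat: \LaxPsTwo(\dbl{E},\dbl{E}) \to \LaxPsTwo(\bicat{E},\bicat{E})$ then yields a biadjunction of pseudofunctors, pseudo natural transformations, and invertible modifications, the triangulators remaining invertible globular modifications after forgetting. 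I expect the main — and fairly minor — obstacle to be exactly the bookkeeping of the third paragraph: confirming that $\UBicat$ commutes with every whiskering and composite used in the swallowtail identities. This is formal, but it must be spelled out precisely because there is no ambient 2-functor between double categories and bicategories to appeal to.
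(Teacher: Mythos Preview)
Your proposal is correct and follows exactly the paper's approach: apply \cref{thm:transpose-adjunction} and then pass to underlying bicategories via the forgetful machinery of \cref{def:underlying-bicategory}. The paper's own proof is a single sentence (``Follows from the theorem by passing to bicategories and their cells''), and your write-up simply expands the bookkeeping that this sentence leaves implicit, particularly the compatibility of $\UBicat$ with whiskering needed to transport the swallowtail identities.
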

\begin{proof}
  Follows from the theorem by passing to bicategories and their cells
  (\cref{def:underlying-bicategory}).
\end{proof}

These results reveal another subtlety about transposing adjunctions. In its most
general form, a double adjunction is between a colax double functor on the left
and a lax double functor on the right \cite{grandis2004,grandis2019}. Yet we
cannot transpose colax-lax or even pseudo-lax double adjunctions, i.e.,
adjunctions in the 2-category $\DblLax$, but must restrict still further to
pseudo-pseudo double adjunctions, i.e., adjunctions in the 2-category $\Dbl$.
The problem is that protransformations cannot be post-whiskered by lax functors,
so the data of a lax adjunction seems to not even make sense in the more general
contexts. Having a simple but flexible theory of two-dimensional adjunctions is
an important virtue of double categories that is largely lost when passing to
bicategories.

\section{From double categories with products to cartesian bicategories}
\label{sec:double-to-cartesian-bicat}

As an application of the theory developed so far, we show how to transpose the
cartesian structure possessed by a double category with products, where we use
``products'' in the generalized sense considered first by Paré \cite{pare2009}
and later by the author \cite{patterson2024}. This line of reasoning will
culminate in a proof that every double category with finite products, and in
particular every cartesian equipment, has an underlying cartesian bicategory
\cite{carboni2008}. In this section, we assume acquaintance with
double-categorical products \cite{patterson2024}. We avoid assuming much about
cartesian bicategories by taking advantage of an alternative axiomatization due
to Trimble \cite{trimble2009}.

\subsection{Structure proarrows and comonoid homomorphisms}
\label{sec:structure-proarrows}

First, we recall our notation for products and the structure maps between them
\cite[\S{8}]{patterson2024}. The product of an $I$-indexed family of objects
$\vec{x}$ in a double category, assuming it exists, is denoted
$\Pi\vec{x} \coloneqq \prod_{i \in I} x_i$. An $I$-indexed family $\vec{x}$ can be
reindexed along any function $f_0: J \to I$, yielding a $J$-indexed family of
objects $f_0^*(\vec{x}) \coloneqq \vec{x} \circ f_0$, comprising the objects
$x_{f_0(j)}$ for each $j \in J$. The universal property of the product
$\Pi f_0^* \vec{x}$ then furnishes a structure map, the unique arrow
$\Pi(f_0)_{\vec{x}}: \Pi\vec{x} \to \Pi f_0^* \vec{x}$ satisfying
$\pi_j \circ \Pi(f_0)_{\vec{x}} = \pi_{f_0(j)}$ for each $j \in J$. Similarly, an
$I$-indexed parallel family of proarrows $\vec{m}: \vec{x} \proto \vec{y}$ has a
reindexing along a function $f_0: J \to I$, which is a $J$-indexed parallel family
of proarrows $f_0^*(\vec{m}): f_0^*(\vec{x}) \proto f_0^*(\vec{y})$. The
universal property of the product again gives a structure map, now a cell of the
form:
\begin{equation*}
  \begin{tikzcd}
    {\Pi\vec{x}} & {\Pi\vec{y}} \\
    {\Pi f_0^* \vec{x}} & {\Pi f_0^* \vec{y}}
    \arrow[""{name=0, anchor=center, inner sep=0}, "{\Pi\vec{m}}", "\shortmid"{marking}, from=1-1, to=1-2]
    \arrow["{\Pi(f_0)_{\vec{x}}}"', from=1-1, to=2-1]
    \arrow[""{name=1, anchor=center, inner sep=0}, "{\Pi f_0^* \vec{m}}"', "\shortmid"{marking}, from=2-1, to=2-2]
    \arrow["{\Pi(f_0)_{\vec{y}}}", from=1-2, to=2-2]
    \arrow["{\Pi(f_0)_{\vec{m}}}"{description}, draw=none, from=0, to=1]
  \end{tikzcd}.
\end{equation*}
Such arrows and cells between products can be transposed, by a straightforward
application of \cref{thm:companion-transformation}.

\begin{proposition}[Structure proarrows between products]
  \label{prop:structure-proarrows}
  Let $\dbl{D}$ be a double category with normal lax (finite) products, and let
  $f_0: J \to I$ be a function between (finite) sets. Then for each $I$-indexed
  family of objects $\vec{x}$ in $\dbl{D}$, the structure arrow
  $\Pi(f_0)_{\vec{x}}: \Pi\vec{x} \to \Pi f_0^*\vec{x}$ has a companion and a conjoint,
  \begin{equation*}
    (\Pi(f_0)_{\vec{x}})_!: \Pi\vec{x} \proto \Pi f_0^* \vec{x}
    \qquad\text{and}\qquad
    (\Pi(f_0)_{\vec{x}})^*: \Pi f_0^* \vec{x} \proto \Pi\vec{x}.
  \end{equation*}
  For each $I$-indexed parallel family of proarrows
  $\vec{m}: \vec{x} \proto \vec{y}$ in $\dbl{D}$, the structure cell
  $\Pi(f_0)_{\vec{m}}$ can be reshaped (via \cref{lem:sliding-globular}) into
  cells
  \begin{equation*}
    \begin{tikzcd}
      {\Pi\vec{x}} & {\Pi\vec{y}} & {\Pi f_0^* \vec{y}} \\
      {\Pi\vec{x}} & {\Pi f_0^* \vec{x}} & {\Pi f_0^* \vec{y}}
      \arrow["{\Pi\vec{m}}", "\shortmid"{marking}, from=1-1, to=1-2]
      \arrow["{\Pi f_0^* \vec{m}}"', "\shortmid"{marking}, from=2-2, to=2-3]
      \arrow["{(\Pi(f_0)_{\vec{x}})_!}"', "\shortmid"{marking}, from=2-1, to=2-2]
      \arrow["{(\Pi(f_0)_{\vec{y}})_!}", "\shortmid"{marking}, from=1-2, to=1-3]
      \arrow[Rightarrow, no head, from=1-1, to=2-1]
      \arrow[Rightarrow, no head, from=1-3, to=2-3]
      \arrow["{(\Pi(f_0)_{\vec{m}})_!}"{description}, draw=none, from=1-2, to=2-2]
    \end{tikzcd}
    \qquad\text{and}\qquad
    \begin{tikzcd}
      {\Pi f_0^* \vec{x}} & {\Pi\vec{x}} & {\Pi\vec{y}} \\
      {\Pi f_0^* \vec{x}} & {\Pi f_0^* \vec{y}} & {\Pi\vec{y}}
      \arrow["{\Pi\vec{m}}", "\shortmid"{marking}, from=1-2, to=1-3]
      \arrow["{\Pi f_0^* \vec{m}}"', "\shortmid"{marking}, from=2-1, to=2-2]
      \arrow["{\Pi(f_0)_{\vec{y}}^*}"', "\shortmid"{marking}, from=2-2, to=2-3]
      \arrow["{\Pi(f_0)_{\vec{x}}^*}", "\shortmid"{marking}, from=1-1, to=1-2]
      \arrow[Rightarrow, no head, from=1-3, to=2-3]
      \arrow["{\Pi(f_0)_{\vec{m}}^*}"{description}, draw=none, from=1-2, to=2-2]
      \arrow[Rightarrow, no head, from=1-1, to=2-1]
    \end{tikzcd},
  \end{equation*}
  which together form the components of oplax and
  lax protransformations
  \begin{equation*}
    \begin{tikzcd}
      {\dbl{D}^I} && {\dbl{D}^J} \\
      & {\dbl{D}}
      \arrow[""{name=0, anchor=center, inner sep=0}, "\Pi"', from=1-1, to=2-2]
      \arrow[""{name=1, anchor=center, inner sep=0}, "\Pi", from=1-3, to=2-2]
      \arrow["{\dbl{D}^{f_0}}", from=1-1, to=1-3]
      \arrow["{\Pi(f_0)_!}", "\shortmid"{marking}, shift left, shorten <=12pt, shorten >=12pt, Rightarrow, from=0, to=1]
    \end{tikzcd}
    \qquad\text{and}\qquad
    \begin{tikzcd}
      {\dbl{D}^I} && {\dbl{D}^J} \\
      & {\dbl{D}}
      \arrow[""{name=0, anchor=center, inner sep=0}, "\Pi"', from=1-1, to=2-2]
      \arrow[""{name=1, anchor=center, inner sep=0}, "\Pi", from=1-3, to=2-2]
      \arrow["{\dbl{D}^{f_0}}", from=1-1, to=1-3]
      \arrow["{\Pi(f_0)^*}"', "\shortmid"{marking}, shift right, shorten <=12pt, shorten >=12pt, Rightarrow, from=1, to=0]
    \end{tikzcd}.
  \end{equation*}
  Moreover, whenever $f_0$ is a bijection, the two protransformations are pseudo
  and form an adjoint equivalence $\Pi(f_0)_! \dashv \Pi(f_0)^*$ in
  $\LaxPs(\dbl{D}^I,\dbl{D})$.
\end{proposition}
\begin{proof}
  Fixing a function $f: J \to I$, the structure arrows and cells,
  $\Pi(f_0)_{\vec{x}}$ and $\Pi(f_0)_{\vec{m}}$, form the components of a natural
  transformation
  \begin{equation*}
    \begin{tikzcd}
      {\dbl{D}^I} && {\dbl{D}^J} \\
      & {\dbl{D}}
      \arrow[""{name=0, anchor=center, inner sep=0}, "\Pi"', from=1-1, to=2-2]
      \arrow[""{name=1, anchor=center, inner sep=0}, "\Pi", from=1-3, to=2-2]
      \arrow["{\dbl{D}^{f_0}}", from=1-1, to=1-3]
      \arrow["{\Pi(f_0)}", shift left, shorten <=12pt, shorten >=12pt, Rightarrow, from=0, to=1]
    \end{tikzcd}
  \end{equation*}
  between lax double functors. Indeed, for any arrows $f: \vec{x} \to \vec{y}$ in
  $\dbl{D}^I$, the naturality square
  \begin{equation*}
    \begin{tikzcd}[column sep=large]
      {\Pi\vec{x}} && {\Pi\vec{y}} \\
      {\Pi f_0^*\vec{x}} && {\Pi f_0^* \vec{y}}
      \arrow["{\Pi(f_0)_{\vec{x}}}"', from=1-1, to=2-1]
      \arrow["{\Pi f \coloneqq \Pi_{i \in I} f_i}", from=1-1, to=1-3]
      \arrow["{\Pi(f_0)_{\vec{y}}}", from=1-3, to=2-3]
      \arrow["{\Pi(f_0,f)}"{description}, dashed, from=1-1, to=2-3]
      \arrow["{\Pi f_0^* f \coloneqq \Pi_{j \in J} f_{f_0(j)}}"', from=2-1, to=2-3]
    \end{tikzcd}
  \end{equation*}
  commutes, its common composite being the unique arrow
  $\Pi(f_0,f): \Pi\vec{x} \to \Pi f_0^* \vec{y}$ in $\dbl{D}$ satisfying
  $\pi_j \circ \Pi(f_0,f) = f_{f_0(j)} \circ \pi_{f_0(j)}$ for each $j \in J$. Naturality with
  respect to cells is perfectly analogous.

  Now, since $\dbl{D}$ has \emph{normal} lax products, each component arrow
  $\Pi(f_0)_{\vec{x}}$ has both a companion and a conjoint \cite[\mbox{Corollary
    8.4}]{patterson2024}. Therefore, by \cref{thm:companion-transformation}, the
  natural transformation $\Pi(f_0)$ itself has a companion and a conjoint, which
  are oplax and lax protransformations, respectively. Moreover, when $f_0$ is a
  bijection, $\Pi(f_0)$ is a natural isomorphism and hence, by
  \cref{cor:companion-natural-isomorphism}, the induced protransformations are
  pseudo and form an adjoint equivalence.
\end{proof}

The proposition has several important special cases, applicable in any double
category $\dbl{D}$ with normal lax finite products.
\begin{itemize}
  \item \emph{Diagonals and codiagonals}: taking the unique function
    $f_0: 2 \xto{!} 1$, the diagonal $\Delta_x: x \to x^2$ at an object $x$ has
    a companion and a conjoint,
    \begin{equation*}
      \delta_x \coloneqq (\Delta_x)_!: x \proto x^2
      \qquad\text{and}\qquad
      \delta_x^* \coloneqq (\Delta_x)^*: x^2 \proto x,
    \end{equation*}
    and the diagonal $\Delta_m: m \to m^2$ at a proarrow $m: x \proto y$ has reshapings
    into cells
    \begin{equation} \label{eq:diagonal-proarrow-comparison}
      \begin{tikzcd}[row sep=scriptsize]
        x & y & {y^2} \\
        x & {x^2} & {y^2}
        \arrow["m", "\shortmid"{marking}, from=1-1, to=1-2]
        \arrow["{\delta_y}", "\shortmid"{marking}, from=1-2, to=1-3]
        \arrow[Rightarrow, no head, from=1-1, to=2-1]
        \arrow[Rightarrow, no head, from=1-3, to=2-3]
        \arrow["{\delta_x}"', "\shortmid"{marking}, from=2-1, to=2-2]
        \arrow["{m^2}"', "\shortmid"{marking}, from=2-2, to=2-3]
        \arrow["{\delta_m}"{description}, draw=none, from=1-2, to=2-2]
      \end{tikzcd}
      \qquad\text{and}\qquad
      \begin{tikzcd}[row sep=scriptsize]
        {x^2} & x & y \\
        {x^2} & {y^2} & y
        \arrow["m", "\shortmid"{marking}, from=1-2, to=1-3]
        \arrow["{m^2}"', "\shortmid"{marking}, from=2-1, to=2-2]
        \arrow["{\delta_x^*}", "\shortmid"{marking}, from=1-1, to=1-2]
        \arrow["{\delta_m^*}"{description}, draw=none, from=1-2, to=2-2]
        \arrow["{\delta_y^*}"', "\shortmid"{marking}, from=2-2, to=2-3]
        \arrow[Rightarrow, no head, from=1-1, to=2-1]
        \arrow[Rightarrow, no head, from=1-3, to=2-3]
      \end{tikzcd},
    \end{equation}
    which together form the components of oplax and lax protransformations
    \begin{equation*}
      \delta: 1_{\dbl{D}} \proTo \times \circ \Delta_{\dbl{D}}
      \qquad\text{and}\qquad
      \delta^*:  \times \circ \Delta_{\dbl{D}} \proTo 1_{\dbl{D}}.
    \end{equation*}
  \item \emph{Deletion and creation}: taking the unique function $f_0: 0 \xto{!} 1$,
    the deletion map $!_x: x \to 1$ at an object $x$ has a companion and a
    conjoint,
    \begin{equation*}
      \varepsilon_x \coloneqq (!_x)_!: x \proto 1
      \qquad\text{and}\qquad
      \varepsilon_x^* \coloneqq (!_x)^*: 1 \proto x,
    \end{equation*}
    and the deletion cell $!_m: m \to 1$ at a proarrow $m: x \proto y$ has
    reshapings into cells
    \begin{equation}  \label{eq:deletion-proarrow-comparison}
      \begin{tikzcd}[row sep=scriptsize]
        x & y & 1 \\
        x && 1
        \arrow["m", "\shortmid"{marking}, from=1-1, to=1-2]
        \arrow["{\varepsilon_y}", "\shortmid"{marking}, from=1-2, to=1-3]
        \arrow[Rightarrow, no head, from=1-1, to=2-1]
        \arrow[""{name=0, anchor=center, inner sep=0}, "{\varepsilon_x}"', "\shortmid"{marking}, from=2-1, to=2-3]
        \arrow[Rightarrow, no head, from=1-3, to=2-3]
        \arrow["{\varepsilon_m}"{description, pos=0.4}, draw=none, from=1-2, to=0]
      \end{tikzcd}
      \qquad\text{and}\qquad
      \begin{tikzcd}[row sep=scriptsize]
        1 & x & y \\
        1 && y
        \arrow["{\varepsilon_x^*}", "\shortmid"{marking}, from=1-1, to=1-2]
        \arrow[""{name=0, anchor=center, inner sep=0}, "{\varepsilon_y^*}"', "\shortmid"{marking}, from=2-1, to=2-3]
        \arrow[Rightarrow, no head, from=1-3, to=2-3]
        \arrow[Rightarrow, no head, from=1-1, to=2-1]
        \arrow["m", from=1-2, to=1-3]
        \arrow["{\varepsilon_m^*}"{description, pos=0.4}, draw=none, from=1-2, to=0]
      \end{tikzcd},
    \end{equation}
    which together form the components of oplax and lax protransformations
    \begin{equation*}
      \varepsilon: 1_{\dbl{D}} \proTo {I_{\dbl{D}}} \circ {!_{\dbl{D}}}
      \qquad\text{and}\qquad
      \varepsilon^*: {I_{\dbl{D}}} \circ {!_{\dbl{D}}} \proTo 1_{\dbl{D}}.
    \end{equation*}
  \item \emph{Symmetries}: taking the swap function $f_0: 2 \xto{\cong} 2$, the symmetry
    isomorphism $\sigma_{x,x'}: x \times x' \to x' \times x$ at a pair of
    objects $x$ and $x'$ has a companion and a conjoint,
    \begin{equation*}
      \tau_{x,x'} \coloneqq (\sigma_{x,x'})_!: x \times x' \proto x' \times x
      \qquad\text{and}\qquad
      \tau_{x,x'}^* \coloneqq (\sigma_{x,x'})^*: x' \times x \proto x \times x',
    \end{equation*}
    and the symmetry isomorphism $\sigma_{m,m'}: m \times m' \to m' \times m$ at a pair of
    proarrows $m: x \proto y$ and $m': x' \proto y'$ has reshapings into
    invertible cells
    \begin{equation*}
      \begin{tikzcd}[row sep=scriptsize]
        {x \times x'} & {y \times y'} & {y' \times y} \\
        {x \times x'} & {x' \times x} & {y' \times y}
        \arrow["{m \times m'}", "\shortmid"{marking}, from=1-1, to=1-2]
        \arrow["{\tau_{y,y'}}", "\shortmid"{marking}, from=1-2, to=1-3]
        \arrow[Rightarrow, no head, from=1-1, to=2-1]
        \arrow[Rightarrow, no head, from=1-3, to=2-3]
        \arrow["{\tau_{x,x'}}"', "\shortmid"{marking}, from=2-1, to=2-2]
        \arrow["{m' \times m}"', "\shortmid"{marking}, from=2-2, to=2-3]
        \arrow["{\tau_{m,m'}}"{description}, draw=none, from=1-2, to=2-2]
      \end{tikzcd}
      \qquad\text{and}\qquad
      \begin{tikzcd}
        {x' \times x} & {x \times x'} & {y \times y'} \\
        {x' \times x} & {y' \times y} & {y \times y'}
        \arrow["{m \times m'}", "\shortmid"{marking}, from=1-2, to=1-3]
        \arrow["{m' \times m}"', "\shortmid"{marking}, from=2-1, to=2-2]
        \arrow["{\tau_{x,x'}^*}", "\shortmid"{marking}, from=1-1, to=1-2]
        \arrow["{\tau_{m,m'}^*}"{description}, draw=none, from=1-2, to=2-2]
        \arrow[Rightarrow, no head, from=1-3, to=2-3]
        \arrow[Rightarrow, no head, from=1-1, to=2-1]
        \arrow["{\tau_{y,y'}^*}"', "\shortmid"{marking}, from=2-2, to=2-3]
      \end{tikzcd}.
    \end{equation*}
    These are the components of two protransformations, forming an adjoint
    equivalence
    \begin{equation*}
      \begin{tikzcd}
        \times & {{\times} \circ {\sigma_{\dbl{D},\dbl{D}}}}
        \arrow[""{name=0, anchor=center, inner sep=0}, "\tau", "\shortmid"{marking}, curve={height=-24pt}, Rightarrow, from=1-1, to=1-2]
        \arrow[""{name=1, anchor=center, inner sep=0}, "{\tau^*}", "\shortmid"{marking}, curve={height=-24pt}, Rightarrow, from=1-2, to=1-1]
        \arrow["\dashv"{anchor=center, rotate=-90}, draw=none, from=0, to=1]
      \end{tikzcd}
      \qquad\text{in}\qquad \LaxPs(\dbl{D}^2,\dbl{D}).
    \end{equation*}
\end{itemize}

By combining these special cases, we obtain the comonoid and monoid structures
that play such a central role in Carboni and Walters' original axioms for a
(locally posetal) cartesian bicategory \cite{carboni1987}. Any object $x$ in a
double category $\dbl{D}$ with lax finite products canonically has the structure
of a commutative comonoid $(x,\Delta_x,!_x)$ in $\dbl{D}_0$. When $\dbl{D}$ has
\emph{normal} lax finite products, these can be transposed into a commutative
comonoid $(x,\delta_x,\varepsilon_x)$ and a commutative monoid $(x,\delta_x^*,\varepsilon_x^*)$ in the
underlying bicategory $\UBicat(\dbl{D})$, where the commutative (co)monoid laws
now hold only up to isomorphism, by the pseudofunctoriality of companions and
conjoints. The comonoid and monoid structures are also adjoint in the sense that
$\delta_x \dashv \delta_x^*$ and $\varepsilon_x \dashv \varepsilon_x^*$ in $\UBicat(\dbl{D})$. Finally, every proarrow
$m$ in $\dbl{D}$ is automatically an ``oplax comonoid homomorphism,'' witnessed by
the comparison cells $\delta_m$ and $\varepsilon_m$, and a ``lax monoid homomorphism,'' witnessed
by the comparison cells $\delta_m^*$ and $\varepsilon_m^*$. In the locally posetal case, these
are essentially the axioms of a cartesian bicategory \cite[\mbox{Definition
  1.2}]{carboni1987}.\footnote{Such observations are easily cast into the form
  of a theorem but do not on their own suffice to give a cartesian bicategory in
  the general case. For now, we focus on a precise treatment of comonoid and
  monoid homomorphisms in a double category with products, an interesting topic
  in its own right.}

\begin{definition}[Comonoid homomorphisms]
  A proarrow $m$ in a double category with normal lax finite products is a
  \define{comonoid homomorphism} if the cells \mbox{$\Delta_m: m \to m^2$} and
  \mbox{$!_m: m \to 1$} are commuters, i.e., the globular cells $\delta_m$ and $\varepsilon_m$ in
  \cref{eq:diagonal-proarrow-comparison,eq:deletion-proarrow-comparison} are
  invertible.

  Dually, a proarrow $m$ is a \define{monoid homomorphism} if the cells $\Delta_m$
  and $!_m$ are cocommuters, i.e., the globular cells $\delta_m^*$ and $\varepsilon_m^*$ are
  invertible.
\end{definition}

\begin{proposition}[Companions are comonoid homomorphisms]
  Any companion of an arrow in a double category with normal lax finite products
  is a comonoid homomorphism, and any conjoint is a monoid homomorphism.
\end{proposition}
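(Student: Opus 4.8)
The plan is to deduce both claims directly from \cref{lem:components-at-companions-are-commuters}, applied to the diagonal and deletion natural transformations. By definition, a proarrow $m$ is a comonoid homomorphism exactly when the diagonal cell $\Delta_m \colon m \to m^2$ and the deletion cell $!_m \colon m \to 1$ are commuters, equivalently when their reshapings $\delta_m$ and $\varepsilon_m$ from \cref{eq:diagonal-proarrow-comparison,eq:deletion-proarrow-comparison} are invertible. So, given a companion $f_! \colon x \proto y$ of an arrow $f \colon x \to y$, it suffices to show that $\Delta_{f_!}$ and $!_{f_!}$ are commuters. The first step is to record that $\Delta_{f_!}$ is precisely the component at the proarrow $f_!$ of the diagonal natural transformation $\Delta \colon 1_{\dbl{D}} \To {\times} \circ \Delta_{\dbl{D}}$ underlying $\delta$, i.e.\ the instance of \cref{prop:structure-proarrows} at the function $2 \xto{!} 1$, and likewise that $!_{f_!}$ is the component at $f_!$ of the deletion natural transformation $1_{\dbl{D}} \To I_{\dbl{D}} \circ {!_{\dbl{D}}}$ underlying $\varepsilon$, the instance at $0 \xto{!} 1$.

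The second step is to check the hypotheses of \cref{lem:components-at-companions-are-commuters}. The source and target functors must be normal lax: $1_{\dbl{D}}$ is pseudo, while ${\times} \circ \Delta_{\dbl{D}}$ and $I_{\dbl{D}} \circ {!_{\dbl{D}}}$ are normal lax because the diagonal $\Delta_{\dbl{D}}$ and ${!_{\dbl{D}}}$ are strict and the binary and nullary product functors are normal lax by the standing assumption that $\dbl{D}$ has normal lax finite products. The arrow $f$ has a companion by hypothesis. Finally, the components of these two natural transformations at the objects $x$ and $y$---the diagonals $\Delta_x, \Delta_y$ and the deletions $!_x, !_y$---each have a companion (indeed a conjoint) in $\dbl{D}$ by \cref{prop:structure-proarrows}. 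Hence \cref{lem:components-at-companions-are-commuters} applies and shows that $\Delta_{f_!}$ and $!_{f_!}$ are commuters, so $f_!$ is a comonoid homomorphism.

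For the conjoint $f^*$ of an arrow, I would run the dual argument: the cells $\delta_m^*$ and $\varepsilon_m^*$ are the conjoint-reshapings of $\Delta_m$ and $!_m$, so it suffices to see that $\Delta_{f^*}$ and $!_{f^*}$ are cocommuters, which follows from the dual of \cref{lem:components-at-companions-are-commuters} (valid by \cref{rem:duality}) together with the fact, again from \cref{prop:structure-proarrows}, that the diagonals and deletions have conjoints. Equivalently, one may pass to $\dbl{D}^\op$---in which conjoints become companions---and invoke the comonoid case.

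The argument is short, and its only real subtlety is the identification in the first step: one must be sure that the diagonal cell $\Delta_{f_!}$ at the proarrow $f_!$ is literally the naturality component of $\Delta$ at $f_!$ (not merely isomorphic to it), and similarly for $!_{f_!}$, after which \cref{lem:components-at-companions-are-commuters} does all the work. A secondary point worth making explicit is the normality of the target functors ${\times} \circ \Delta_{\dbl{D}}$ and $I_{\dbl{D}} \circ {!_{\dbl{D}}}$, which is exactly where the word ``normal'' in ``normal lax finite products'' is used.
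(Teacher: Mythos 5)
Your proof is correct and follows essentially the same route as the paper: identify $\Delta_{f_!}$ and $!_{f_!}$ as the components at the proarrow $f_!$ of the natural transformations $\Delta = \Pi(2 \xto{!} 1)$ and $! = \Pi(0 \xto{!} 1)$ between normal lax functors, apply \cref{lem:components-at-companions-are-commuters} (using that the diagonals and deletions have companions by normality of the products), and dualize for conjoints. The only small caveat is your parenthetical alternative of passing to $\dbl{D}^\op$: the opposite double category turns products into coproducts, so the comonoid case cannot literally be invoked there; the duality that converts conjoints into companions while preserving the product structure is $(-)^\rev$---but your primary argument via the dual of \cref{lem:components-at-companions-are-commuters} is exactly what the paper does and needs no such detour.
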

\begin{proof}
  Let $\dbl{D}$ be a double category with normal lax finite products. Taking the
  functions $f_0: 2 \xto{!} 1$ and $g_0: 0 \xto{!} 1$ in the proof of
  \cref{prop:structure-proarrows} gives natural transformations
  \begin{equation*}
    \Delta \coloneqq \Pi(f_0): 1_{\dbl{D}} \To \times \circ \Delta_{\dbl{D}}
    \qquad\text{and}\qquad
    ! \coloneqq \Pi(g_0): 1_{\dbl{D}} \To 1 \circ {!_{\dbl{D}}}
  \end{equation*}
  between normal lax functors. Now apply
  \cref{lem:components-at-companions-are-commuters} and its dual for conjoints.
\end{proof}

\begin{remark}[Comonoid homomorphism not a companion]
  The converse is false: a comonoid homomorphism in a double category with
  products need not be a companion, nor a map (in the sense recalled in
  \cref{def:map} below). A counterexample in the double category of
  boolean-valued profunctors, which is even locally posetal, has been given by
  Todd Trimble \cite{trimble2023mo}.
\end{remark}

Commutative comonoids and comonoid homomorphisms, which figure so prominently in
the original axioms for a locally posetal cartesian bicategory
\cite{carboni1987}, make no appearance in the axioms for a general cartesian
bicategory proposed much later \cite{carboni2008}. The central concept is now
that of a \emph{map}.

\begin{definition}[Map] \label{def:map}
  A morphism in a bicategory is a \define{map} if it has a right adjoint.
  Similarly, a proarrow in a double category $\dbl{D}$ is \define{map} if it is
  a map in the underlying bicategory of $\dbl{D}$.
\end{definition}

A companion of an arrow in a double category is a map whenever the arrow also
has a conjoint, in which case the conjoint is right adjoint to the companion
(\cref{rem:mates}). Again, the converse is not true: a double category can
possess maps that are not companions. In the double category of profunctors, the
statement that a profunctor $\cat{C} \proto \cat{D}$ has a right adjoint if and
only if it is a companion of a functor $\cat{C} \to \cat{D}$ is equivalent to the
codomain $\cat{D}$ being Cauchy complete \cite[\mbox{Volume 1}, \mbox{Theorem
  7.9.3}]{borceux1994}.

\begin{remark}[Maps as structure versus property]
  In a few important double categories with products, such as $\Span$ and
  $\Rel$, companions, maps, and comonoid homomorphisms all coincide and capture
  the expected notion of ``function-like'' proarrow. This elegant equivalence
  motivates the theory of cartesian bicategories. However, the equivalence is a
  rather special situation. The facts cited above imply that in the double
  category $\Prof$, the three classes of proarrows---companions, maps, and
  comonoid homomorphisms---separate, with only the first exactly capturing the
  profunctors isomorphic to functors. Apparently, it is more robust to treat the
  ``function-like'' morphisms not as ``relation-like'' morphisms satisfying
  special properties but rather as extra structure. That is, of course, what
  happens in a double category.
\end{remark}

\subsection{Transposing cartesian structure}
\label{sec:transposing-cartesian-structure}

To reduce complications with coherence, Carboni, Kelly, Walters, and Wood define
a cartesian bicategory in two stages, taking advantage of bicategorical
universal properties in the first stage. In outline, a bicategory $\bicat{B}$ is
\define{precartesian} when $\bicat{B}$ has finite local products and
$\Map\bicat{B}$, the locally full sub-bicategory of maps, has finite products
(in the sense of bilimits) \cite[\mbox{Definition 3.1}]{carboni2008}. Using
these universal properties, lax functors $\otimes: \bicat{B}^2 \to \bicat{B}$ and
$I: \bicat{1} \to \bicat{B}$ are constructed \cite[\mbox{Theorem
  3.15}]{carboni2008}. Finally, a precartesian bicategory $\bicat{B}$ is defined
to be \define{cartesian} when these lax functors are pseudo
\cite[\mbox{Definition 4.1}]{carboni2008}. This definition, while indirect, has
the advantage of making it immediately clear that being cartesian is a
\emph{property} of a bicategory.

Nevertheless, we find it more convenient to use another definition, due to Todd
Trimble \cite{trimble2009}, of a cartesian \emph{structure} on a bicategory,
closer in spirit to the original Carboni-Walters definition in the locally
posetal case \cite{carboni1987}. On the cited nLab page, Trimble sketches a
proof that such a cartesian structure is essentially unique, and in particular
recovers the universal properties of a cartesian bicategory à la Carboni et al.
When we say ``cartesian bicategory,'' we will mean Trimble's notion; however,
since the equivalence with the original definition has not appeared in print,
the scrupulous reader may prefer to read this as ``Trimble-cartesian
bicategory.''

\begin{definition}[Cartesian bicategory à la Trimble]
  \label{def:cartesian-bicategory}
  A \define{cartesian structure} on a bicategory $\bicat{B}$ consists of:
  \begin{enumerate}[(i),noitemsep]
    \item pseudofunctors $\otimes: \bicat{B}^2 \to \bicat{B}$ and
      $I: \bicat{1} \to \bicat{B}$;
    \item oplax natural transformations
      \begin{equation*}
        \begin{tikzcd}[column sep=small]
          {\bicat{B}} && {\bicat{B}^2} \\
          & {\bicat{B}}
          \arrow["{\Delta_{\bicat{B}}}", from=1-1, to=1-3]
          \arrow[""{name=0, anchor=center, inner sep=0}, "\otimes", from=1-3, to=2-2]
          \arrow[""{name=1, anchor=center, inner sep=0}, "{1_{\bicat{B}}}"', from=1-1, to=2-2]
          \arrow["\delta", shorten <=9pt, shorten >=9pt, Rightarrow, from=1, to=0]
        \end{tikzcd}
        \qquad\qquad
        \begin{tikzcd}[column sep=small]
          {\bicat{B}^2} && {\bicat{B}} \\
          & {\bicat{B}^2}
          \arrow["\otimes", from=1-1, to=1-3]
          \arrow[""{name=0, anchor=center, inner sep=0}, "{1_{\bicat{B}^2}}"', from=1-1, to=2-2]
          \arrow[""{name=1, anchor=center, inner sep=0}, "{\Delta_{\bicat{B}}}", from=1-3, to=2-2]
          \arrow["\pi"', shorten <=9pt, shorten >=9pt, Rightarrow, from=1, to=0]
        \end{tikzcd}
        \qquad\qquad
        \begin{tikzcd}[column sep=small]
          {\bicat{B}} && {\bicat{1}} \\
          & {\bicat{B}}
          \arrow[""{name=0, anchor=center, inner sep=0}, "{1_{\bicat{B}}}"', from=1-1, to=2-2]
          \arrow["{!_{\bicat{B}}}", from=1-1, to=1-3]
          \arrow[""{name=1, anchor=center, inner sep=0}, "I", from=1-3, to=2-2]
          \arrow["\varepsilon", shorten <=9pt, shorten >=9pt, Rightarrow, from=0, to=1]
        \end{tikzcd}
      \end{equation*}
      that are \define{map-valued}, meaning that their components are maps;
    \item invertible modifications
      \begin{equation*}
        \begin{tikzcd}
          {\Delta_{\bicat{B}}} & {\Delta_{\bicat{B}} \circ \otimes \circ \Delta_{\bicat{B}}} \\
          & {\Delta_{\bicat{B}}}
          \arrow[""{name=0, anchor=center, inner sep=0}, "{1_{\Delta_{\bicat{B}}}}"', from=1-1, to=2-2]
          \arrow["{\Delta_{\bicat{B}} * \delta}", from=1-1, to=1-2]
          \arrow["{\pi * \Delta_{\bicat{B}}}", from=1-2, to=2-2]
          \arrow["s", shorten <=5pt, shorten >=5pt, Rightarrow, from=0, to=1-2]
        \end{tikzcd}
        \qquad\qquad
        \begin{tikzcd}
          \otimes & {\otimes \circ \Delta_{\bicat{B}} \circ \otimes} \\
          & \otimes
          \arrow[""{name=0, anchor=center, inner sep=0}, "{1_{\otimes}}"', from=1-1, to=2-2]
          \arrow["{\delta * \otimes}", from=1-1, to=1-2]
          \arrow["{\otimes * \pi}", from=1-2, to=2-2]
          \arrow["t"', shorten <=4pt, shorten >=4pt, Rightarrow, from=1-2, to=0]
        \end{tikzcd}
        \qquad\qquad
        \begin{tikzcd}
          I & {I \circ !_{\bicat{B}} \circ I} \\
          & I
          \arrow["{\varepsilon * I}", from=1-1, to=1-2]
          \arrow["{I*1_{1_{\bicat{B}}}}", from=1-2, to=2-2]
          \arrow[""{name=0, anchor=center, inner sep=0}, "{1_I}"', from=1-1, to=2-2]
          \arrow["u"', shorten <=3pt, shorten >=3pt, Rightarrow, from=1-2, to=0]
        \end{tikzcd},
      \end{equation*}
      the \define{triangulators}, constituting lax adjunctions
      \begin{equation*}
        (\delta, \pi, s, t): \Delta_{\bicat{B}} \dashv \otimes:
          \bicat{B} \tofrom \bicat{B}^2
        \qquad\text{and}\qquad
        (\varepsilon, 1_{1_{\bicat{1}}}, 1_{1_{!_{\bicat{B}}}}, u):\;
          !_{\bicat{B}} \dashv I: \bicat{1} \tofrom \bicat{B}.
      \end{equation*}
  \end{enumerate}
\end{definition}

We can now state and prove the main result of this section. The ``iso-strong''
condition on double products essentially says that parallel products commute
with external composition up to isomorphism, as in a cartesian double category;
for details, see \cite[\S{7}]{patterson2024}.

\begin{theorem}[Underlying cartesian bicategory]
  \label{thm:underlying-cartesian-bicategory}
  The underlying bicategory of a double category with iso-strong finite products
  is cartesian.
\end{theorem}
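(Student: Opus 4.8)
The strategy is to produce the data of a cartesian structure on $\UBicat\dbl{D}$, in the sense of \cref{def:cartesian-bicategory}, by transposing the two double adjunctions that encode the binary and nullary products of $\dbl{D}$, via \cref{cor:transpose-adjunction}. Write $\bicat{D} \coloneqq \UBicat\dbl{D}$. Recall from \cite{patterson2024} that a double category $\dbl{D}$ with finite products carries in particular double adjunctions $\Delta_{\dbl{D}} \dashv {\times}: \dbl{D} \tofrom \dbl{D}^2$ and ${!_{\dbl{D}}} \dashv I_{\dbl{D}}: \dbl{D} \tofrom \dbl{D}^0$, with $\Delta_{\dbl{D}}$ and $!_{\dbl{D}}$ strict. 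The unit of the first is the diagonal natural transformation $\Delta: 1_{\dbl{D}} \To {\times}\circ\Delta_{\dbl{D}}$ and its counit the projection natural transformation $p: \Delta_{\dbl{D}}\circ{\times} \To 1_{\dbl{D}^2}$; the unit of the second is the deletion natural transformation $!: 1_{\dbl{D}} \To I_{\dbl{D}}\circ{!_{\dbl{D}}}$ and its counit the identity on the terminal double category $\dbl{D}^0$. The right adjoints $\times$ and $I_{\dbl{D}}$ are a priori only normal lax; here is where the \emph{iso-strong} hypothesis is used, for iso-strength of the finite products is exactly the condition that the lax-functoriality comparisons of $\times$ are invertible (those of $I_{\dbl{D}}$ being so trivially), i.e.\ that $\times$ and $I_{\dbl{D}}$ are \emph{pseudo} double functors. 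Hence all four of $\Delta_{\dbl{D}}$, $\times$, $!_{\dbl{D}}$, $I_{\dbl{D}}$ are pseudo.

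Second, the component arrows of these units and counits are structure arrows between products in the sense of \cref{prop:structure-proarrows}: the components of $\Delta$ are the diagonals $\Delta_x: x\to x^2$ (the case $f_0: 2\to 1$), the components of $p$ are the projection pairs (each $\pi_i$ coming from an inclusion $1\to 2$), and the components of $!$ are the deletions $!_x: x\to 1$ (the case $f_0: 0\to 1$); the components of the trivial counit are identity arrows. Since $\dbl{D}$ has iso-strong, hence normal lax, finite products, \cref{prop:structure-proarrows} furnishes for each such structure arrow both a companion and a conjoint in $\dbl{D}$, and companions and conjoints in $\dbl{D}^2$ are formed componentwise, so the component arrows of $\Delta$, $p$, and $!$ all have companions (indeed companions and conjoints). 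The hypotheses of \cref{thm:transpose-adjunction} and \cref{cor:transpose-adjunction} are therefore satisfied by both double adjunctions. Applying \cref{cor:transpose-adjunction} to the first gives a lax adjunction
\[
  (\delta,\pi,s,t): \Delta_{\bicat{D}} \dashv {\otimes}: \bicat{D} \tofrom \bicat{D}^2,
\]
where $\bicat{D}^2 = \UBicat(\dbl{D}^2)$, $\otimes$ is $\times$ read between underlying bicategories, $\delta = \Delta_!$ and $\pi = p_!$ are the transposed \emph{oplax} natural transformations of \cref{prop:structure-proarrows}, and $s,t$ are invertible modifications; applying it to the second gives a lax adjunction ${!_{\bicat{D}}} \dashv I: \bicat{D} \tofrom \bicat{1}$ (with $\bicat{1} = \UBicat(\dbl{D}^0)$) whose oplax unit is $\varepsilon = {!_!}$, whose counit is trivial, and whose sole nontrivial triangulator is an invertible modification $u$.

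Third, since $\times$ and $I_{\dbl{D}}$ are pseudo, $\otimes$ and $I$ are pseudofunctors --- this is item (i) of \cref{def:cartesian-bicategory} --- and items (ii)--(iii) are the oplax natural transformations $\delta$, $\pi$, $\varepsilon$ together with the invertible triangulators $s$, $t$, $u$, which assemble into the displayed lax adjunctions. It remains only to check that $\delta$, $\pi$, $\varepsilon$ are \emph{map-valued}: their components are the companions $(\Delta_x)_!$, $((\pi_1)_!,(\pi_2)_!)$, and $(!_x)_!$, and each of the underlying arrows $\Delta_x$, $\pi_i$, $!_x$ also has a conjoint by \cref{prop:structure-proarrows}, so by \cref{rem:mates} every such companion is left adjoint, in $\UBicat\dbl{D}$, to the corresponding conjoint, hence is a map (\cref{def:map}). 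This exhibits a cartesian structure on $\UBicat\dbl{D}$, so $\UBicat\dbl{D}$ is cartesian.

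Essentially all of the routine work has already been discharged --- in \cref{thm:companion-transformation}, \cref{thm:transpose-adjunction}, and \cref{prop:structure-proarrows} --- so the argument above is little more than bookkeeping. The one step that genuinely requires care is the identification in the first paragraph of ``iso-strong finite products'' with ``the product double functors $\times$ and $I_{\dbl{D}}$ are pseudo,'' since it is this that licenses transposing the double adjunctions at all (\cref{thm:transpose-adjunction} requires both adjoint functors to be pseudo); I expect it to come down to unwinding the definition of iso-strong products from \cite[\S{7}]{patterson2024} and matching it against the pseudo-functoriality comparisons ${\times}(m\odot n) \cong {\times}m \odot {\times}n$. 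No corresponding strengthening is available --- or needed --- for the units $\Delta$, $!$ or the counit $p$: their transposes $\delta$, $\varepsilon$, $\pi$ remain merely \emph{oplax}, not pseudo, and it is precisely this residual oplaxity --- irremovable by any hypothesis on products --- that makes $\UBicat\dbl{D}$ a genuine cartesian bicategory rather than one with honest bicategorical products.
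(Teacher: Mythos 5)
Your proposal is correct and follows essentially the same route as the paper: restrict the iso-strong products to pseudo double functors $\times$ and $1$, obtain the map-valued oplax transformations $\delta$, $\pi$, $\varepsilon$ as companions of the structure natural transformations via \cref{prop:structure-proarrows}, and transpose the two double adjunctions $\Delta_{\dbl{D}} \dashv \times$ and $!_{\dbl{D}} \dashv 1$ using \cref{cor:transpose-adjunction} to produce Trimble's lax adjunctions. Your explicit justification of map-valuedness via \cref{rem:mates} and your flagged identification of iso-strength with pseudo-functoriality of $\times$ are exactly the points the paper handles by citation to \cite{patterson2024}, so there is no substantive difference.
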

\begin{proof}
  Let $\dbl{D}$ be a double category with iso-strong finite products. We will
  exhibit a cartesian structure on the underlying bicategory of $\dbl{D}$, a
  structure that is essentially unique \cite{trimble2009}.
  \begin{enumerate}[(i)]
    \item The iso-strong double products in $\dbl{D}$ restrict to double functors
      $\times: \dbl{D}^2 \to \dbl{D}$ and $1: \dbl{1} \to \dbl{D}$. Underlying these are
      pseudofunctors $\otimes: \bicat{D}^2 \to \bicat{D}$ and
      $I: \bicat{1} \to \bicat{D}$, where we write $\bicat{D}$ for the underlying
      bicategory of $\dbl{D}$.

    \item As already observed, taking the functions $f_0: 2 \xto{!} 1$ and
      $g_0 : 0 \xto{!} 1$ in \cref{prop:structure-proarrows} gives
      natural transformations
      \begin{equation*}
        \Delta \coloneqq \Pi(f_0): 1_{\dbl{D}} \To \times \circ \Delta_{\dbl{D}}
        \qquad\text{and}\qquad
        ! \coloneqq \Pi(g_0): 1_{\dbl{D}} \To 1 \circ {!_{\dbl{D}}}
      \end{equation*}
      that have companions, which are map-valued, oplax protransformations
      \begin{equation*}
        \delta \coloneqq \Pi(f_0)_!: 1_{\dbl{D}} \proTo \times \circ \Delta_{\dbl{D}}
        \qquad\text{and}\qquad
        \varepsilon \coloneqq \Pi(g_0)_!: 1_{\dbl{D}} \proTo 1 \circ {!_{\dbl{D}}}.
      \end{equation*}
      Similarly, taking each of the two inclusions $\iota_1, \iota_2: 1 \hookrightarrow 2$, we form
      the natural transformation
      \begin{equation*}
        \Pi \coloneqq (\Pi^1, \Pi^2) \coloneqq (\Pi(\iota_1), \Pi(\iota_2)):
          \Delta_{\dbl{D}} \circ \times \To 1_{\dbl{D}^2}
      \end{equation*}
      whose components are the pairs of projections
      \begin{equation*}
        \Pi_{(x,y)} \coloneqq (\Pi^1_{x,y}, \Pi^2_{x,y}):
          (x \times y, x \times y) \to (x,y),
        \qquad x,y \in \dbl{D}.
      \end{equation*}
      It too has a companion, a map-valued, oplax protransformation
      $\pi \coloneqq \Pi_!: \Delta_{\dbl{D}} \circ \times \proTo 1_{\dbl{D}^2}$. Underlying $\delta$,
      $\varepsilon$, and $\pi$ are the map-valued, oplax natural transformations that we
      need.

    \item Since the double category $\dbl{D}$ has iso-strong finite products, it is,
      in particular, a cartesian double category \cite[\S{7}]{patterson2024}. By
      the definition of the latter \cite{aleiferi2018}, there are double
      adjunctions
      \begin{equation*}
        (\Delta, \Pi): \Delta_{\dbl{D}} \dashv \times: \dbl{D} \tofrom \dbl{D}^2
        \qquad\text{and}\qquad
        (!, 1_{1_{\dbl{1}}}): {!_{\dbl{D}}} \dashv 1: \dbl{D} \tofrom \dbl{1}.
      \end{equation*}
      Applying \cref{cor:transpose-adjunction} to these double adjunctions
      completes the proof. \qedhere
    \end{enumerate}
\end{proof}

\begin{corollary} \label{cor:underlying-cartesian-bicategory}
  The underlying bicategory of a cartesian equipment is cartesian.
\end{corollary}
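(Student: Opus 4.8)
The plan is to deduce the corollary from \cref{thm:underlying-cartesian-bicategory} by checking that the hypothesis of that theorem — a double category with iso-strong finite products — is met by any cartesian equipment. So the first thing I would do is unpack the definition: a cartesian equipment is an equipment, i.e., a double category in which every arrow has both a companion and a conjoint, whose underlying double category carries finite double products in Aleiferi's sense (equivalently, is a cartesian double category). The theorem requires the \emph{iso-strong} products of \cite[\S 7]{patterson2024}, which are a priori stronger than Aleiferi-style products, so the one real task is to verify that in the presence of the equipment structure these two notions coincide.

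Accordingly, the substantive step I would invoke is the fact, established in \cite{patterson2024}, that in an equipment every choice of Aleiferi finite double products is automatically iso-strong. The relevant comparison is a globular cell relating the parallel product of an external composite, $\Pi(\vec{m} \odot \vec{n})$, to the external composite of parallel products, $\Pi\vec{m} \odot \Pi\vec{n}$; in a general cartesian double category this cell need not be invertible, but in an equipment it is. I would not grind through the verification, since it is done in the cited paper, but the mechanism is worth recalling: one uses the universal properties of restrictions and extensions — available precisely because the double category is an equipment — together with the observation (already exploited in \cref{prop:structure-proarrows}) that the product double functor $\times \colon \dbl{D}^2 \to \dbl{D}$ preserves companions and conjoints. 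Sliding the comparison cell along the companions and conjoints of the structure arrows then produces an explicit inverse.

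With iso-strongness in hand the corollary is immediate: a cartesian equipment is a double category with iso-strong finite products, so \cref{thm:underlying-cartesian-bicategory} applies verbatim and its underlying bicategory is cartesian. I expect the only delicate point — and hence the step I would treat most carefully — to be definitional alignment: making sure that "cartesian equipment" is taken to mean an equipment (companions \emph{and} conjoints for every arrow) together with the nullary and binary double products needed to run the iso-strongness argument, so that both the oplax transformations $\delta, \varepsilon, \pi$ and their lax counterparts $\delta^*, \varepsilon^*, \pi^*$ are available. Once those conventions are fixed, there is no further obstacle, and the proof reduces to a citation plus an application of the theorem.
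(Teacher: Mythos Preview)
Your proposal is correct and follows exactly the paper's approach: reduce to \cref{thm:underlying-cartesian-bicategory} by citing the result from \cite{patterson2024} (specifically Corollary~8.7 there) that cartesian equipments have iso-strong finite products. The extra discussion you give of the mechanism behind iso-strongness is accurate but not needed for the proof itself.
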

\begin{proof}
  This follows from the theorem because cartesian equipments have iso-strong
  finite products \cite[\mbox{Corollary 8.7}]{patterson2024}.
\end{proof}

Nearly all of the cartesian bicategories conceived by Carboni, Kelly, Walters,
and Wood \cite[\mbox{Example 3.2}]{carboni2008}, including the prototypical
bicategories of spans and of profunctors, are known to be cartesian equipments
and thus inherit their cartesian structure from this corollary.\footnote{The
  only exception in the list from \cite[\mbox{Example 3.2}]{carboni2008} is the
  2-category of categories with finite products, finite-product-preserving
  functors, and natural transformations. However, this example is better viewed
  as a 2-category than a bicategory. Indeed, it has finite products in the
  strict sense, by general results about algebras of 2-monads
  \cite[\S\S{6.1--6.2}]{lack2010companion}.} Via a more abstract method of
proof, our result strengthens Lambert's result that \emph{locally posetal}
cartesian equipments have underlying cartesian bicategories
\cite[\mbox{Proposition 3.1}]{lambert2022}. An earlier related result is
Lawler's equivalence \cite[\mbox{Proposition 4.2.4}]{lawler2015} between
cartesian bicategories and a special type of cartesian equipment, called
\emph{chordate} after \cite[\mbox{Example 3.2}]{lack2012}.

\subsection{Transposing cocartesian structure}
\label{sec:transposing-cocartesian-structure}

We turn now from cartesian to cocartesian structure. Of course, they are
formally dual, but due to asymmetries in the main examples, namely that they
tend to have strong coproducts but only iso-strong products, we can obtain
stronger results about cocartesian structure. Loosely speaking, a double
category has \emph{strong} coproducts if arbitrary span-indexed coproducts of
proarrows exist and commute with external composition; for details, see
\cite[\S{4}]{patterson2024}.

The following proposition dualizes \cref{prop:structure-proarrows} while
strengthening its hypotheses and conclusions to account for strong coproducts.

\begin{proposition}[Structure maps between strong coproducts]
  \label{prop:structure-proarrows-coproducts}
  Let $\dbl{D}$ be a double category with strong (finite) coproducts. For any
  function $f_0: I \to J$ between (finite) sets, the natural transformation
  \begin{equation*}
    \begin{tikzcd}
      {\dbl{D}^I} && {\dbl{D}^J} \\
      & {\dbl{D}}
      \arrow[""{name=0, anchor=center, inner sep=0}, "\Sigma"', from=1-1, to=2-2]
      \arrow[""{name=1, anchor=center, inner sep=0}, "\Sigma", from=1-3, to=2-2]
      \arrow["{\dbl{D}^{f_0}}"', from=1-3, to=1-1]
      \arrow["{\Sigma(f_0)}", shift left, shorten <=12pt, shorten >=12pt, Rightarrow, from=0, to=1]
    \end{tikzcd}
  \end{equation*}
  induced by the universal property of coproducts has both a companion
  $\Sigma(f_0)_!$ and a conjoint $\Sigma(f_0)^*$ in the double category
  $\LaxPs(\dbl{D}^J, \dbl{D})$.
\end{proposition}
\begin{proof}
  By \cref{cor:companion-transformation-pseudo}, it is equivalent to show that,
  for each $J$-indexed family of objects $\vec{x}$ in $\dbl{D}$, the component
  arrow $\Sigma(f_0)_{\vec{x}}: \Sigma f_0^* \vec{x} \to \Sigma \vec{x}$ has both a companion and
  a conjoint in $\dbl{D}$, and for each $J$-indexed parallel family of proarrows
  $\vec{m}: \vec{x} \proto \vec{y}$ in $\dbl{D}$, the component cell
  \begin{equation*}
    \begin{tikzcd}
      {\Sigma f_0^* \vec{x}} & {\Sigma f_0^* \vec{y}} \\
      {\Sigma\vec{x}} & {\Sigma\vec{y}}
      \arrow["{\Sigma(f_0)_{\vec{x}}}"', from=1-1, to=2-1]
      \arrow["{\Sigma(f_0)_{\vec{y}}}", from=1-2, to=2-2]
      \arrow[""{name=0, anchor=center, inner sep=0}, "{\Sigma\vec{m}}"', "\shortmid"{marking}, from=2-1, to=2-2]
      \arrow[""{name=1, anchor=center, inner sep=0}, "{\Sigma f_0^* \vec{m}}", "\shortmid"{marking}, from=1-1, to=1-2]
      \arrow["{\Sigma(f_0)_{\vec{m}}}"{description}, draw=none, from=1, to=0]
    \end{tikzcd}
  \end{equation*}
  is both a commuter and cocommuter. The existence of the companions and
  conjoints in $\dbl{D}$ follows directly from the dual of \cite[\mbox{Corollary
    8.4}]{patterson2024}.

  We prove that the component cells are commuters; that they are cocommuters is
  proved dually. Given a $J$-indexed family $\vec{m}: \vec{x} \proto \vec{y}$,
  we must show that the reshaped cell
  \begin{equation*}
    \begin{tikzcd}
      {\Sigma f_0^* \vec{x}} & {\Sigma f_0^* \vec{y}} & {\Sigma\vec{y}} \\
      {\Sigma f_0^* \vec{x}} & {\Sigma\vec{x}} & {\Sigma\vec{y}}
      \arrow["{\Sigma\vec{m}}"', "\shortmid"{marking}, from=2-2, to=2-3]
      \arrow["{\Sigma f_0^* \vec{m}}", "\shortmid"{marking}, from=1-1, to=1-2]
      \arrow["{(\Sigma(f_0)_{\vec{x}})_!}"', "\shortmid"{marking}, from=2-1, to=2-2]
      \arrow["{(\Sigma(f_0)_{\vec{y}})_!}", "\shortmid"{marking}, from=1-2, to=1-3]
      \arrow["{(\Sigma(f_0)_{\vec{m}})_!}"{description}, draw=none, from=1-2, to=2-2]
      \arrow[Rightarrow, no head, from=1-3, to=2-3]
      \arrow[Rightarrow, no head, from=1-1, to=2-1]
    \end{tikzcd}
    \quad=\quad
    \begin{tikzcd}
      {\Sigma f_0^* \vec{x}} & {\Sigma f_0^* \vec{x}} & {\Sigma f_0^* \vec{y}} & {\Sigma\vec{y}} \\
      {\Sigma f_0^* \vec{x}} & {\Sigma\vec{x}} & {\Sigma\vec{y}} & {\Sigma\vec{y}}
      \arrow["{\Sigma(f_0)_{\vec{x}}}"{description}, from=1-2, to=2-2]
      \arrow["{\Sigma(f_0)_{\vec{y}}}"{description}, from=1-3, to=2-3]
      \arrow[""{name=0, anchor=center, inner sep=0}, "{\Sigma\vec{m}}"', "\shortmid"{marking}, from=2-2, to=2-3]
      \arrow[""{name=1, anchor=center, inner sep=0}, "{\Sigma f_0^* \vec{m}}", "\shortmid"{marking}, from=1-2, to=1-3]
      \arrow[""{name=2, anchor=center, inner sep=0}, "{(\Sigma(f_0)_{\vec{x}})_!}"', "\shortmid"{marking}, from=2-1, to=2-2]
      \arrow[""{name=3, anchor=center, inner sep=0}, "\shortmid"{marking}, Rightarrow, no head, from=1-1, to=1-2]
      \arrow[Rightarrow, no head, from=1-1, to=2-1]
      \arrow[Rightarrow, no head, from=1-4, to=2-4]
      \arrow[""{name=4, anchor=center, inner sep=0}, "\shortmid"{marking}, Rightarrow, no head, from=2-3, to=2-4]
      \arrow[""{name=5, anchor=center, inner sep=0}, "{(\Sigma(f_0)_{\vec{y}})_!}", "\shortmid"{marking}, from=1-3, to=1-4]
      \arrow["{\Sigma(f_0)_{\vec{m}}}"{description}, draw=none, from=1, to=0]
      \arrow["\eta"{description}, draw=none, from=3, to=2]
      \arrow["\varepsilon"{description}, draw=none, from=5, to=4]
    \end{tikzcd}
  \end{equation*}
  is invertible. Using the notation of \cite[\S{3}]{patterson2024}, we have the
  equation in $\DblFam(\dbl{D})$
  \begin{equation*}
    \begin{tikzcd}
      {(I, f_0^*\vec{x})} & {(I, f_0^*\vec{x})} & {(I, f_0^*\vec{y})} & {(J,\vec{y})} \\
      {(I, f_0^*\vec{x})} & {(J,\vec{x})} & {(J,\vec{y})} & {(J,\vec{y})}
      \arrow[""{name=0, anchor=center, inner sep=0}, "{((f_0)_!, \id)}"', "\shortmid"{marking}, from=2-1, to=2-2]
      \arrow[Rightarrow, no head, from=1-1, to=2-1]
      \arrow[""{name=1, anchor=center, inner sep=0}, "\shortmid"{marking}, Rightarrow, no head, from=1-1, to=1-2]
      \arrow["{(f_0,1)}"{description}, from=1-2, to=2-2]
      \arrow[""{name=2, anchor=center, inner sep=0}, "{(\id_I, f_0^* \vec{m})}", "\shortmid"{marking}, from=1-2, to=1-3]
      \arrow["{(f_0,1)}"{description}, from=1-3, to=2-3]
      \arrow[""{name=3, anchor=center, inner sep=0}, "{(\id_J,\vec{m})}"', "\shortmid"{marking}, from=2-2, to=2-3]
      \arrow[""{name=4, anchor=center, inner sep=0}, "{((f_0)_!, \id)}", "\shortmid"{marking}, from=1-3, to=1-4]
      \arrow[Rightarrow, no head, from=1-4, to=2-4]
      \arrow[""{name=5, anchor=center, inner sep=0}, "\shortmid"{marking}, Rightarrow, no head, from=2-3, to=2-4]
      \arrow["\eta"{description}, draw=none, from=1, to=0]
      \arrow["{(f_0,1)}"{description}, draw=none, from=2, to=3]
      \arrow["\varepsilon"{description}, draw=none, from=4, to=5]
    \end{tikzcd}
    \quad=\quad
    \begin{tikzcd}
      {(I, f_0^*\vec{x})} & {(J,\vec{y})} \\
      {(I, f_0^*\vec{x})} & {(J,\vec{y})}
      \arrow[Rightarrow, no head, from=1-1, to=2-1]
      \arrow[Rightarrow, no head, from=1-2, to=2-2]
      \arrow[""{name=0, anchor=center, inner sep=0}, "{((f_0)_!, f_0^*\vec{m})}", "\shortmid"{marking}, from=1-1, to=1-2]
      \arrow[""{name=1, anchor=center, inner sep=0}, "{((f_0)_!, f_0^*\vec{m})}"', "\shortmid"{marking}, from=2-1, to=2-2]
      \arrow["1"{description}, draw=none, from=0, to=1]
    \end{tikzcd},
  \end{equation*}
  where the cells $\eta = (1_I, 1)$ and $\varepsilon = (f_0, 1)$ are indeed binding cells for
  companions in $\DblFam(\dbl{D})$ by \cite[\mbox{Proposition
    3.8}]{patterson2024}. Therefore, the composite of the images of the cells on
  the left-hand side under the coproduct double functor
  $\Sigma: \DblFam(\dbl{D}) \to \dbl{D}$ is the cell of interest,
  $(\Sigma(f_0)_{\vec{m}})_!$. But the image of the right-hand side is invertible, in
  fact is the identity. So, by the naturality and invertibility of the
  composition comparisons of the double functor $\Sigma: \DblFam(\dbl{D}) \to \dbl{D}$,
  it follows that the cell $(\Sigma(f_0)_{\vec{m}})_!$ is also invertible.
\end{proof}

Recall that a bicategory has \define{direct sums} when it has finite
bicategorical products and coproducts, the initial object is also terminal
(i.e., is a \define{zero object}), and the canonical comparison morphisms
$\sum_{i \in I} x_i \to \prod_{i \in I} x_i$ constructed using the zero object
are equivalences \cite[\S{6}]{lack2010bicategories}. In the proof of the
following theorem, we are able to choose direct sums such that the finite
products and coproducts actually have identical underlying objects.

\begin{theorem} \label{thm:direct-sums}
  The underlying bicategory of a double category with \emph{strong} finite
  coproducts has direct sums, induced from coproducts in the underlying
  2-category by taking companions and conjoints.
\end{theorem}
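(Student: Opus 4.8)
The plan is to transpose the coproduct double adjunctions of $\dbl{D}$ in two different ways: once by taking companions, producing bicategorical coproducts, and once by taking conjoints, producing bicategorical products on the very same objects. First I would recall that a double category with strong finite coproducts is, in particular, a cocartesian double category --- the formal dual of Aleiferi's cartesian double categories \cite{aleiferi2018} --- so that the strong coproducts restrict to pseudo double functors $\Sigma: \dbl{D}^2 \to \dbl{D}$ and $0: \dbl{1} \to \dbl{D}$ and there are double adjunctions
\begin{equation*}
  \Sigma \dashv \Delta_{\dbl{D}}: \dbl{D}^2 \tofrom \dbl{D}
  \qquad\text{and}\qquad
  0 \dashv {!}_{\dbl{D}}: \dbl{1} \tofrom \dbl{D}
\end{equation*}
in $\Dbl$. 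The nontrivial components of their units and counits --- the coprojections $\Sigma(\iota_i)$, the codiagonal $\Sigma(2 \xto{!} 1)$, and the initial-object maps $\Sigma(0 \xto{!} 1)$, paired up as in the proof of \cref{thm:underlying-cartesian-bicategory} --- are exactly the structure natural transformations covered by \cref{prop:structure-proarrows-coproducts}; hence their component arrows have companions and conjoints in $\dbl{D}$ and their component cells are both commuters and cocommuters.

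Next I would feed these double adjunctions into \cref{cor:transpose-adjunction}. Transposing via companions gives biadjunctions $\Sigma \dashv \Delta_{\bicat{D}}$ and $0 \dashv {!}$ in the underlying bicategory $\bicat{D}$, comprising pseudofunctors, pseudo natural transformations, and invertible modifications, so $\bicat{D}$ has finite bicategorical coproducts, with $x + y$ the coproduct of $x$ and $y$. Transposing via conjoints instead, I would invoke the dual of \cref{thm:transpose-adjunction}: the conjoints $\eta^*$ and $\varepsilon^*$ of a unit $\eta$ and a counit $\varepsilon$ point in the directions of a counit and a unit for the \emph{reversed} adjunction, so the same double adjunctions produce biadjunctions $\Delta_{\bicat{D}} \dashv \Sigma$ and ${!} \dashv 0$ in $\bicat{D}$, again of pseudofunctors, pseudo natural transformations, and invertible modifications. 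Thus $\bicat{D}$ also has finite bicategorical products, and because $\Sigma$ is (the bicategory underlying) the same double functor in both transpositions, the product of $x$ and $y$ is again $x + y$ and the terminal object is again $0$. Since the bicategorical products and coproducts so obtained have identical underlying objects, $\bicat{D}$ has direct sums, induced from the coproducts of the strict direction by taking companions and conjoints.

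The one point that needs genuine care is the conjoint version of the adjunction transposition. I would argue that it is the routine dual of \cref{thm:transpose-adjunction}: \cref{thm:companion-transformation} already furnishes conjoints of natural transformations as (here pseudo) lax protransformations, and the functoriality lemmas \cref{lem:pre-whiskering-functoriality,lem:post-whiskering-functoriality} as well as \cref{lem:components-at-companions-are-commuters} are stated for arbitrary protransformations, so the triangulators exist and the swallowtail identities again hold by the essential uniqueness of conjoints. The conceptual heart of the argument --- and the thing to state cleanly --- is precisely the observation that one coproduct double adjunction, transposed by companions and by conjoints, yields a bicategorical coproduct and a bicategorical product on the same underlying objects, which is what makes them into direct sums.
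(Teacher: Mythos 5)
Your proposal is correct and follows essentially the same route as the paper: apply \cref{prop:structure-proarrows-coproducts} to the coprojection and codiagonal transformations, then transpose the coproduct double adjunction once by companions (via \cref{cor:transpose-adjunction}) and once by conjoints (via its dual) to get bicategorical coproducts and products on the same underlying pseudofunctor. The only cosmetic difference is that the paper treats all finite index sets $I$ uniformly via $\Sigma \dashv \Delta_{\dbl{D}}: \dbl{D}^I \tofrom \dbl{D}$ rather than splitting into the binary and nullary cases.
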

\begin{proof}
  Suppose $\dbl{D}$ is a double category with strong finite coproducts. Then,
  for any finite set $I$, there is a double adjunction
  \begin{equation*}
    (\iota, \nabla): \Sigma \dashv \Delta_{\dbl{D}}: \dbl{D}^I \tofrom \dbl{D}.
  \end{equation*}
  By \cref{prop:structure-proarrows-coproducts} applied to the functions
  $f_0: 1 \xhookrightarrow{\iota_i} I$, for $i \in I$, and $f_0: I \xto{!} 1$, both of
  the transformations $\iota$ and $\nabla$ have companion and conjoint
  protransformations. In particular, we have an adjunction $\nabla_! \dashv \nabla^*$ between
  the codiagonals and diagonals in the bicategory $\LaxPsTwo(\dbl{D}, \dbl{D})$.

  Furthermore, by \cref{cor:transpose-adjunction} and its dual for conjoints,
  there are biadjunctions
  \begin{equation*}
    (\iota_!, \nabla_!): \oplus \dashv \Delta_{\bicat{D}}: \bicat{D}^I \tofrom \bicat{D}
    \qquad\text{and}\qquad
    (\nabla^*, \iota^*): \Delta_{\bicat{D}} \dashv \oplus: \bicat{D} \tofrom \bicat{D}^I,
  \end{equation*}
  where $\oplus: \bicat{D}^I \to \bicat{D}$ is the pseudofunctor between bicategories
  underlying the coproduct double functor $\Sigma: \dbl{D}^I \to \dbl{D}$. Thus, the
  pseudofunctor $\otimes: \bicat{D}^I \to \bicat{D}$ is a choice of direct sums
  in $\bicat{D}$.
\end{proof}

We caution that the theorem says nothing about cocartesian equipments, which in
general have only iso-strong finite coproducts. Here are two positive examples,
beginning with the ur-example of spans, which have direct sums whenever the base
category has finite coproducts that interact well with pullbacks.

\begin{example}[Spans]
  For any extensive category $\cat{S}$ with pullbacks, the double category of
  spans in $\cat{S}$ has strong finite coproducts \cite[\mbox{Theorem
    4.3}]{patterson2024}. So, by the theorem above, its underlying bicategory of
  spans has direct sums. In this way, we recover a result about bicategories of
  spans proved directly by Lack, Walters, and Wood \cite[\mbox{Theorem
    6.2}]{lack2010bicategories}.
\end{example}

\begin{example}[Matrices]
  For any (infinitary) distributive monoidal category $\catV$, the double
  category of $\catV$-matrices has strong coproducts \cite[\mbox{Proposition
    4.5}]{patterson2024}. Thus, its underlying bicategory of $\catV$-matrices
  has direct sums, giving a categorified ``matrix calculus.''
\end{example}

\begin{remark}[Other notions of coproduct]
  The theorem makes connections with several related ideas in the literature.
  Through his work on proarrow equipments \cite{wood1982,wood1985}, Wood has
  explored structure and axioms on a bicategory that would make it a suitable
  environment for formal category theory. His first three axioms define a
  \emph{proarrow equipment} \cite[\mbox{Axioms 1-3}]{wood1985}, which is
  interchangeable with an equipment in the sense of double categories
  \cite[\mbox{Appendix C}]{shulman2008}. His fourth axiom states that the
  2-category of arrows has finite coproducts, which become bicategorical
  coproducts upon taking companions and bicategorical products upon taking
  conjoints \cite[\mbox{Axiom 4}]{wood1985}. So, it follows from
  \cref{thm:direct-sums} that an equipment with strong finite coproducts
  satisfies Wood's first four axioms. In addition, we obtain an instance of
  Garner and Shulman's ``tight finite coproducts,'' a kind of ``tight collage''
  in an equipment \cite[\mbox{Example 16.14}]{garner2016}. When the double
  category is strict, we likewise have an instance of Lack and Shulman's ``tight
  finite coproducts'' in an $\catF$-category \cite[\S{3.5.1}]{lack2012}.

  Wood's fifth and final axiom \cite[\mbox{Axiom 5}]{wood1985}, as well as
  Garner and Shulman's other kind of tight collage \cite[\mbox{Example
    16.13}]{garner2016}, concern the existence of Kleisli objects and are beyond
  the scope of this paper. Nevertheless, the results presented here lend further
  support to the thesis, under active development by numerous category
  theorists, that double categories are an elegant foundation for formal
  category theory.
\end{remark}

\appendix
\crefalias{section}{appendix}

\section{Companions and conjoints}
\label{sec:companions}

Companions and conjoints provide the basic means to transpose structure in
double categories. As such, they have played a central role in previous works on
extracting structured bicategories from structured double categories
\cite{shulman2010,hansen2019}. In this appendix, we review the definitions of
companions and conjoints and the facts about them that we use throughout the
paper. Most of the results are known, if not always stated explicitly in the
literature. An exception is the final result recognizing companions as a
biadjoint, which refines an adjunction observed earlier by Grandis and Paré
\cite{grandis2004}.

Companions and conjoints were introduced by Grandis and Paré under the names
``orthogonal companions'' and ``orthogonal adjoints'' \cite{grandis2004}, with
antecedents in Brown and Spencer's early work on ``connections'' in a double
category \cite{brown1976}. The close relation between companions, conjoints, and
proarrow equipments was discovered by Shulman \cite{shulman2008}.

\begin{definition}[Companions and conjoints]
  A \define{companion} of an arrow $f: x \to y$ in a double category consists of a
  proarrow $f_!: x \proto y$ and a pair of cells
  \begin{equation*}
    \begin{tikzcd}
      x & x \\
      x & y
      \arrow[""{name=0, anchor=center, inner sep=0}, "{f_!}"', "\shortmid"{marking}, from=2-1, to=2-2]
      \arrow["f", from=1-2, to=2-2]
      \arrow[""{name=1, anchor=center, inner sep=0}, "{\id_x}", "\shortmid"{marking}, from=1-1, to=1-2]
      \arrow[Rightarrow, no head, from=1-1, to=2-1]
      \arrow["\eta"{description}, draw=none, from=1, to=0]
    \end{tikzcd}
    \qquad\text{and}\qquad
    \begin{tikzcd}
      x & y \\
      y & y
      \arrow["f"', from=1-1, to=2-1]
      \arrow[""{name=0, anchor=center, inner sep=0}, "{f_!}", "\shortmid"{marking}, from=1-1, to=1-2]
      \arrow[""{name=1, anchor=center, inner sep=0}, "{\id_y}"', "\shortmid"{marking}, from=2-1, to=2-2]
      \arrow[Rightarrow, no head, from=1-2, to=2-2]
      \arrow["\varepsilon"{description}, draw=none, from=0, to=1]
    \end{tikzcd},
  \end{equation*}
  the \define{unit} and the \define{counit}, satisfying the equations
  \begin{equation*}
    \begin{tikzcd}
      x & x & y \\
      x & y & y
      \arrow["f"', from=1-2, to=2-2]
      \arrow[""{name=0, anchor=center, inner sep=0}, "{f_!}", "\shortmid"{marking}, from=1-2, to=1-3]
      \arrow[""{name=1, anchor=center, inner sep=0}, "{\id_y}"', "\shortmid"{marking}, from=2-2, to=2-3]
      \arrow[Rightarrow, no head, from=1-3, to=2-3]
      \arrow[""{name=2, anchor=center, inner sep=0}, "{f_!}"', "\shortmid"{marking}, from=2-1, to=2-2]
      \arrow[Rightarrow, no head, from=1-1, to=2-1]
      \arrow[""{name=3, anchor=center, inner sep=0}, "{\id_x}", "\shortmid"{marking}, from=1-1, to=1-2]
      \arrow["\varepsilon"{description}, draw=none, from=0, to=1]
      \arrow["\eta"{description}, draw=none, from=3, to=2]
    \end{tikzcd}
    \quad=\quad
    \begin{tikzcd}
      x & y \\
      x & y
      \arrow[""{name=0, anchor=center, inner sep=0}, "{f_!}", "\shortmid"{marking}, from=1-1, to=1-2]
      \arrow[""{name=1, anchor=center, inner sep=0}, "{f_!}"', "\shortmid"{marking}, from=2-1, to=2-2]
      \arrow[Rightarrow, no head, from=1-1, to=2-1]
      \arrow[Rightarrow, no head, from=1-2, to=2-2]
      \arrow["{1_{f_!}}"{description}, draw=none, from=0, to=1]
    \end{tikzcd}
    \qquad\text{and}\qquad
    \begin{tikzcd}
      x & x \\
      x & y \\
      y & y
      \arrow["f", from=1-2, to=2-2]
      \arrow[""{name=0, anchor=center, inner sep=0}, "{f_!}", "\shortmid"{marking}, from=2-1, to=2-2]
      \arrow[Rightarrow, no head, from=1-1, to=2-1]
      \arrow[""{name=1, anchor=center, inner sep=0}, "{\id_x}", "\shortmid"{marking}, from=1-1, to=1-2]
      \arrow["f"', from=2-1, to=3-1]
      \arrow[Rightarrow, no head, from=2-2, to=3-2]
      \arrow[""{name=2, anchor=center, inner sep=0}, "{\id_y}"', "\shortmid"{marking}, from=3-1, to=3-2]
      \arrow["\eta"{description, pos=0.4}, draw=none, from=1, to=0]
      \arrow["\varepsilon"{description}, draw=none, from=0, to=2]
    \end{tikzcd}
    \quad=\quad
    \begin{tikzcd}
      x & x \\
      y & y
      \arrow[""{name=0, anchor=center, inner sep=0}, "{\id_x}", "\shortmid"{marking}, from=1-1, to=1-2]
      \arrow[""{name=1, anchor=center, inner sep=0}, "{\id_y}"', "\shortmid"{marking}, from=2-1, to=2-2]
      \arrow["f"', from=1-1, to=2-1]
      \arrow["f", from=1-2, to=2-2]
      \arrow["{\id_f}"{description}, draw=none, from=0, to=1]
    \end{tikzcd},
  \end{equation*}
  where in the first equation we suppress the unit isomorphisms
  $\id_x \odot f_! \cong f_!$ and $f_! \odot \id_y \cong f_!$ following our
  usual convention.

  Dually, a \define{conjoint} of an arrow $f: x \to y$ consists of a proarrow
  $f^*: y \proto x$ and a pair of cells
  \begin{equation*}
    \begin{tikzcd}
      x & x \\
      y & x
      \arrow["f"', from=1-1, to=2-1]
      \arrow[""{name=0, anchor=center, inner sep=0}, "{f^*}"', "\shortmid"{marking}, from=2-1, to=2-2]
      \arrow[Rightarrow, no head, from=1-2, to=2-2]
      \arrow[""{name=1, anchor=center, inner sep=0}, "{\id_x}", "\shortmid"{marking}, from=1-1, to=1-2]
      \arrow["\eta"{description}, draw=none, from=1, to=0]
    \end{tikzcd}
    \qquad\text{and}\qquad
    \begin{tikzcd}
      y & x \\
      y & y
      \arrow[""{name=0, anchor=center, inner sep=0}, "{f^*}", "\shortmid"{marking}, from=1-1, to=1-2]
      \arrow["f", from=1-2, to=2-2]
      \arrow[Rightarrow, no head, from=1-1, to=2-1]
      \arrow[""{name=1, anchor=center, inner sep=0}, "{\id_y}"', "\shortmid"{marking}, from=2-1, to=2-2]
      \arrow["\varepsilon"{description}, draw=none, from=0, to=1]
    \end{tikzcd}
  \end{equation*}
  satisfying the equations $\varepsilon \odot \eta = 1_{f^*}$ and
  $\eta \cdot \varepsilon = \id_f$.
\end{definition}

\begin{remark}[Duality] \label{rem:duality}
  The two definitions are indeed dual; they are even dual according to both
  forms of double-categorical duality. A conjoint of an arrow $f: x \to y$ in a
  double category $\dbl{D}$ is just a companion of $f: x \to y$ in
  $\dbl{D}^\rev$, where the \define{reverse} double category $\dbl{D}^\rev$ is
  obtained from $\dbl{D}$ by swapping the source and target functors
  $s,t: \dbl{D}_1 \rightrightarrows \dbl{D}_0$. Alternatively, a conjoint of an
  arrow $f: x \to y$ in $\dbl{D}$ is exactly a companion of $f: y \to x$ in
  $\dbl{D}^\op$, where the \define{opposite} double category $\dbl{D}^\op$ has
  opposite underlying categories $(\dbl{D}^\op)_i = (\dbl{D}_i)^\op$ for
  $i=0,1$. Either way, any statement about companions has a dual statement about
  conjoints and vice versa.
\end{remark}

Defining companions and conjoints equationally, analogous to defining an
adjunction by unit and counit cells obeying the triangle identities, is useful
for performing calculations. Alternatively, companions and conjoints can each be
defined by either of two universal properties, recognizing the unit or counit as
an opcartesian or cartesian cell, respectively. This fundamental three-way
equivalence is stated in \cite[\S\S{1.2--1.3}]{grandis2004} and extended to
equipments in \cite[\mbox{Theorem 4.1}]{shulman2008}. As usual, the universal
properties imply that companions and conjoints are unique up to canonical
isomorphism whenever they exist, which can also be proved equationally
\cite[\mbox{Lemma 3.8}]{shulman2010}.

\subsection{Reshaping cells by sliding}
\label{sec:sliding}

Companions and conjoints allow cells in a double category to be reshaped by
``sliding'' arrows around corners, turning them into proarrows. Since this is
the basic technique on which the paper rests, we will present it in detail. The
following lemma restates the rules for sliding companions and conjoints, as
found in sources such as \cite[\S{1.6}]{grandis2004}, \cite[\mbox{Proposition
  20}]{garner2009}, \cite[\mbox{Proposition 5.13}]{shulman2011}, and
\cite[\S{1}]{pare2023}.

\begin{lemma}[Sliding] \label{lem:sliding-general}
  Suppose $x \xto{f} x' \xto{f'} x''$ and $y \xto{g} y' \xto{g'} y''$ are arrows
  in a double category. If the arrows $f'$ and $g$ have companions and a choice
  of them is made, then there is a bijective correspondence between cells of the
  left and middle shapes:
  \begin{equation*}
    \begin{tikzcd}
      x & y & {y'} \\
      {x'} & {x''} & {y''}
      \arrow["m", "\shortmid"{marking}, from=1-1, to=1-2]
      \arrow["{g'}", from=1-3, to=2-3]
      \arrow["{m''}"', "\shortmid"{marking}, from=2-2, to=2-3]
      \arrow["{f'_!}"', "\shortmid"{marking}, from=2-1, to=2-2]
      \arrow["f"', from=1-1, to=2-1]
      \arrow["{g_!}", "\shortmid"{marking}, from=1-2, to=1-3]
      \arrow["{\alpha_!}"{description}, draw=none, from=1-2, to=2-2]
    \end{tikzcd}
    \qquad\leftrightsquigarrow\qquad
    \begin{tikzcd}
      x & y \\
      {x'} & {y'} \\
      {x''} & {y''}
      \arrow[""{name=0, anchor=center, inner sep=0}, "m", "\shortmid"{marking}, from=1-1, to=1-2]
      \arrow["f"', from=1-1, to=2-1]
      \arrow["{f'}"', from=2-1, to=3-1]
      \arrow["g", from=1-2, to=2-2]
      \arrow["{g'}", from=2-2, to=3-2]
      \arrow[""{name=1, anchor=center, inner sep=0}, "{m''}"', "\shortmid"{marking}, from=3-1, to=3-2]
      \arrow["\alpha"{description}, draw=none, from=0, to=1]
    \end{tikzcd}
    \qquad\leftrightsquigarrow\qquad
    \begin{tikzcd}
      {x'} & x & y \\
      {x''} & {y''} & {y'}
      \arrow["m", "\shortmid"{marking}, from=1-2, to=1-3]
      \arrow["g", from=1-3, to=2-3]
      \arrow["{(g')^*}"', "\shortmid"{marking}, from=2-2, to=2-3]
      \arrow["{m''}"', "\shortmid"{marking}, from=2-1, to=2-2]
      \arrow["{f^*}", "\shortmid"{marking}, from=1-1, to=1-2]
      \arrow["{f'}"', from=1-1, to=2-1]
      \arrow["{\alpha^*}"{description}, draw=none, from=1-2, to=2-2]
    \end{tikzcd}.
  \end{equation*}
  Dually, if the arrows $f$ and $g'$ have chosen conjoints, then there is a
  bijective correspondence between cells of the middle and right shapes.
\end{lemma}
\begin{proof}
  The first bijection, involving companions, is given by the assignment
  \begin{equation} \label{eq:sliding-companion}
    \begin{tikzcd}
      x & y & {y'} \\
      {x'} & {x''} & {y''}
      \arrow["m", "\shortmid"{marking}, from=1-1, to=1-2]
      \arrow["{g'}", from=1-3, to=2-3]
      \arrow["{m''}"', "\shortmid"{marking}, from=2-2, to=2-3]
      \arrow["{f'_!}"', "\shortmid"{marking}, from=2-1, to=2-2]
      \arrow["f"', from=1-1, to=2-1]
      \arrow["{g_!}", "\shortmid"{marking}, from=1-2, to=1-3]
      \arrow["{\alpha_!}"{description}, draw=none, from=1-2, to=2-2]
    \end{tikzcd}
    \quad=\quad
    \begin{tikzcd}[row sep=scriptsize]
      x & x & y & {y'} \\
      {x'} & {x'} & {y'} & {y'} \\
      {x'} & {x''} & {y''} & {y''}
      \arrow[""{name=0, anchor=center, inner sep=0}, "m", "\shortmid"{marking}, from=1-2, to=1-3]
      \arrow["f", from=1-2, to=2-2]
      \arrow["{f'}", from=2-2, to=3-2]
      \arrow["g"', from=1-3, to=2-3]
      \arrow["{g'}"', from=2-3, to=3-3]
      \arrow[""{name=1, anchor=center, inner sep=0}, "{m''}"', "\shortmid"{marking}, from=3-2, to=3-3]
      \arrow[""{name=2, anchor=center, inner sep=0}, "{\id_x}", "\shortmid"{marking}, from=1-1, to=1-2]
      \arrow[""{name=3, anchor=center, inner sep=0}, "{\id_{x'}}", "\shortmid"{marking}, from=2-1, to=2-2]
      \arrow["f"', from=1-1, to=2-1]
      \arrow[""{name=4, anchor=center, inner sep=0}, "{f'_!}"', "\shortmid"{marking}, from=3-1, to=3-2]
      \arrow[Rightarrow, no head, from=2-1, to=3-1]
      \arrow["{g'}", from=2-4, to=3-4]
      \arrow[""{name=5, anchor=center, inner sep=0}, "{\id_{y''}}"', "\shortmid"{marking}, from=3-3, to=3-4]
      \arrow[""{name=6, anchor=center, inner sep=0}, "{\id_{y'}}", "\shortmid"{marking}, from=2-3, to=2-4]
      \arrow[Rightarrow, no head, from=1-4, to=2-4]
      \arrow[""{name=7, anchor=center, inner sep=0}, "{g_!}", "\shortmid"{marking}, from=1-3, to=1-4]
      \arrow["\alpha"{description}, draw=none, from=0, to=1]
      \arrow["{\id_f}"{description, pos=0.4}, draw=none, from=2, to=3]
      \arrow["\eta"{description}, draw=none, from=3, to=4]
      \arrow["{\id_{g'}}"{description}, draw=none, from=6, to=5]
      \arrow["\varepsilon"{description, pos=0.4}, draw=none, from=7, to=6]
    \end{tikzcd}
  \end{equation}
  and conversely
  \begin{equation} \label{eq:sliding-companion-inv}
    \begin{tikzcd}
      x & y \\
      {x'} & {y'} \\
      {x''} & {y''}
      \arrow[""{name=0, anchor=center, inner sep=0}, "m", "\shortmid"{marking}, from=1-1, to=1-2]
      \arrow["f"', from=1-1, to=2-1]
      \arrow["{f'}"', from=2-1, to=3-1]
      \arrow["g", from=1-2, to=2-2]
      \arrow["{g'}", from=2-2, to=3-2]
      \arrow[""{name=1, anchor=center, inner sep=0}, "{m''}"', "\shortmid"{marking}, from=3-1, to=3-2]
      \arrow["\alpha"{description}, draw=none, from=0, to=1]
    \end{tikzcd}
    \quad=\quad
    \begin{tikzcd}[row sep=scriptsize]
      x & y & y \\
      x & y & {y'} \\
      {x'} & {x''} & {y''} \\
      {x''} & {x''} & {y''}
      \arrow[""{name=0, anchor=center, inner sep=0}, "m"', "\shortmid"{marking}, from=2-1, to=2-2]
      \arrow["{g'}", from=2-3, to=3-3]
      \arrow[""{name=1, anchor=center, inner sep=0}, "{m''}", "\shortmid"{marking}, from=3-2, to=3-3]
      \arrow[""{name=2, anchor=center, inner sep=0}, "{f'_!}", "\shortmid"{marking}, from=3-1, to=3-2]
      \arrow["f"', from=2-1, to=3-1]
      \arrow[""{name=3, anchor=center, inner sep=0}, "{g_!}"', "\shortmid"{marking}, from=2-2, to=2-3]
      \arrow["{\alpha_!}"{description}, draw=none, from=2-2, to=3-2]
      \arrow["g", from=1-3, to=2-3]
      \arrow[Rightarrow, no head, from=1-2, to=2-2]
      \arrow[""{name=4, anchor=center, inner sep=0}, "{\id_y}", "\shortmid"{marking}, from=1-2, to=1-3]
      \arrow[""{name=5, anchor=center, inner sep=0}, "m", "\shortmid"{marking}, from=1-1, to=1-2]
      \arrow[Rightarrow, no head, from=1-1, to=2-1]
      \arrow["{f'}"', from=3-1, to=4-1]
      \arrow[""{name=6, anchor=center, inner sep=0}, "{m''}"', "\shortmid"{marking}, from=4-2, to=4-3]
      \arrow[Rightarrow, no head, from=3-3, to=4-3]
      \arrow[Rightarrow, no head, from=3-2, to=4-2]
      \arrow[""{name=7, anchor=center, inner sep=0}, "{\id_{x''}}"', "\shortmid"{marking}, from=4-1, to=4-2]
      \arrow["\eta"{description}, draw=none, from=4, to=3]
      \arrow["{1_m}"{description}, draw=none, from=5, to=0]
      \arrow["{1_{m''}}"{description}, draw=none, from=1, to=6]
      \arrow["\varepsilon"{description}, draw=none, from=2, to=7]
    \end{tikzcd}.
  \end{equation}
  That these assignments are mutually inverse follows directly from the defining
  equations for a companion. The second bijection, involving conjoints, is dual.
\end{proof}

A particular case of the lemma is useful enough to state independently. It
furnishes the globular cells available in a bicategory.

\begin{lemma}[Sliding, globular] \label{lem:sliding-globular}
  Suppose $f: x \to w$ and $g: y \to z$ are arrows with companions or conjoints
  in a double category. Then, for any choice of companions or conjoints of $f$
  and $g$, there is one or the other bijective correspondence between cells:
  \begin{equation*}
    \begin{tikzcd}
      x & y & z \\
      x & w & z
      \arrow["m", "\shortmid"{marking}, from=1-1, to=1-2]
      \arrow["{g_!}", "\shortmid"{marking}, from=1-2, to=1-3]
      \arrow[Rightarrow, no head, from=1-1, to=2-1]
      \arrow["{f_!}"', "\shortmid"{marking}, from=2-1, to=2-2]
      \arrow["n"', "\shortmid"{marking}, from=2-2, to=2-3]
      \arrow[Rightarrow, no head, from=1-3, to=2-3]
      \arrow["{\alpha_!}"{description}, draw=none, from=1-2, to=2-2]
    \end{tikzcd}
    \qquad\leftrightsquigarrow\qquad
    \begin{tikzcd}
      x & y \\
      w & z
      \arrow[""{name=0, anchor=center, inner sep=0}, "m", "\shortmid"{marking}, from=1-1, to=1-2]
      \arrow["f"', from=1-1, to=2-1]
      \arrow["g", from=1-2, to=2-2]
      \arrow[""{name=1, anchor=center, inner sep=0}, "n"', "\shortmid"{marking}, from=2-1, to=2-2]
      \arrow["\alpha"{description}, draw=none, from=0, to=1]
    \end{tikzcd}
    \qquad\leftrightsquigarrow\qquad
    \begin{tikzcd}
      w & x & y \\
      w & z & y
      \arrow["{f^*}", "\shortmid"{marking}, from=1-1, to=1-2]
      \arrow["m", "\shortmid"{marking}, from=1-2, to=1-3]
      \arrow["n"', "\shortmid"{marking}, from=2-1, to=2-2]
      \arrow["{g^*}"', "\shortmid"{marking}, from=2-2, to=2-3]
      \arrow[Rightarrow, no head, from=1-3, to=2-3]
      \arrow[Rightarrow, no head, from=1-1, to=2-1]
      \arrow["{\alpha^*}"{description}, draw=none, from=1-2, to=2-2]
    \end{tikzcd}.
  \end{equation*}
\end{lemma}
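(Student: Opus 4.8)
The plan is to derive both correspondences as degenerate instances of \cref{lem:sliding-general}, obtained by substituting identity arrows for half of the arrows appearing there, which forces the relevant two-by-three cells to acquire trivial vertical boundary and hence become globular.

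For the first (companion) correspondence, I would apply \cref{lem:sliding-general} taking the arrows $x \xto{f} x' \xto{f'} x''$ and $y \xto{g} y' \xto{g'} y''$ of that lemma to be $x \xto{\id_x} x \xto{f} w$ and $y \xto{g} z \xto{\id_z} z$, with $m''$ renamed to $n$. The companions of ``$f'$'' and ``$g$'' demanded by the lemma are then literally companions of $f$ and $g$, which we have assumed; the composite edges $\id_x \cdot f$ and $g \cdot \id_z$ along the sides of the middle cell collapse to $f$ and $g$ because $\dbl{D}_0$ is an ordinary (strict) category; and the proarrow boundaries of the leftmost and middle cells involve no composition, so no further unit isomorphisms intrude. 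Thus the left-to-middle bijection of \cref{lem:sliding-general} is exactly the first bijection claimed here.

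For the second (conjoint) correspondence, I would instead apply \cref{lem:sliding-general} with those arrows taken to be $x \xto{f} w \xto{\id_w} w$ and $y \xto{\id_y} y \xto{g} z$, again renaming $m''$ to $n$; now the lemma asks for conjoints of ``$f$'' and ``$g'$'', that is, of $f$ and $g$, and its middle-to-right bijection specializes to the second bijection above. Alternatively, this case is just the first one applied in $\dbl{D}^\rev$, or in $\dbl{D}^\op$, via \cref{rem:duality}. The only thing requiring attention is the routine bookkeeping of the two substitutions---confirming that the outer boundary edges really do become identities and that the unit isomorphisms that survive are precisely those already present in \cref{lem:sliding-general}---and this presents no genuine obstacle.
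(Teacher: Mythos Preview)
Your proposal is correct and is exactly the approach the paper takes: it specializes \cref{lem:sliding-general} by setting $f$ and $g'$ there to identities for the companion bijection, and $f'$ and $g$ to identities for the conjoint bijection. The extra bookkeeping you spell out (that the vertical boundaries collapse to identities and no spurious unitors appear) is implicit in the paper's one-line proof.
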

\begin{proof}
  The first bijection follows by taking $f$ and $g'$ to be identities in
  \cref{lem:sliding-general} and the second follows by taking $f'$ and $g$ to be
  identities.
\end{proof}

\begin{remark}[Mates] \label{rem:mates}
  A companion-conjoint pair of proarrows in a double category is, in particular,
  an adjunction in the underlying bicategory by \cite[\mbox{Proposition
    1.4}]{grandis2004} or \cite[\mbox{Lemma 3.21}]{shulman2010}. Under the
  bijection in \cref{lem:sliding-globular}, the cells $\alpha_!$ and $\alpha^*$
  are then \emph{mates} with respect to the adjunctions $f_! \dashv f^*$ and
  $g_! \dashv g^*$. The sliding rules for companions and conjoints thus extend
  the calculus of mates for adjunctions in a bicategory, a perspective taken in
  \cite{shulman2011}.
\end{remark}

The following terminology is adopted from \cite[\mbox{Definition
  8.1}]{pare2023}.

\begin{definition}[Commuters] \label{def:commuters}
  A cell $\alpha$ as in \cref{lem:sliding-globular} is a \define{commuter} if $\alpha_!$
  is an isomorphism. Dually, a cell $\alpha$ is a \define{cocommuter} if $\alpha^*$ is an
  isomorphism.
\end{definition}

A commuter cell is interpreted as a square of arrows and proarrows that
``commutes'' up to isomorphism. Note that being a commuter or cocommuter are
independent properties, so that a cell can represent an up-to-iso ``commutative
square'' in two different senses. For a cell to be a commuter or cocommuter, it
need not itself be an isomorphism, although, as the next lemma shows, that is a
sufficient condition.

\begin{lemma}[Isomorphisms are commuters] \label{lem:isos-are-commuters}
  Let $\stdInlineCell{\alpha}$ be a cell in a double category. If the arrows $f$ and
  $g$ have companions and $\alpha$ is an isomorphism (hence $f$ and $g$ are also
  isomorphisms), then $\alpha$ is a commuter.

  Dually, if $f$ and $g$ have conjoints and $\alpha$ is an isomorphism, then
  $\alpha$ is a cocommuter.
\end{lemma}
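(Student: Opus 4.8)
The plan is to produce an explicit two-sided inverse for $\alpha_!$, built directly out of $\alpha^{-1}$ together with the companion units and counits of $f$ and $g$. Recall that under the globular sliding bijection of \cref{lem:sliding-globular}, the cell $\alpha_!$ (with boundary $m \odot g_! \Rightarrow f_! \odot n$) is obtained from $\alpha$ by pasting $\alpha$ with the unit of $f_!$ on the left and the counit of $g_!$ on the right. Since $\alpha$ is an isomorphism, its source and target arrows $f$ and $g$ are isomorphisms; but I will not need to invert $f$ or $g$ as arrows — instead I will use that $\alpha^{-1}$ is a bona fide cell, namely a cell with boundary $n \Rightarrow m$ whose source arrow is $f$ read ``backwards'' and target arrow is $g$ read backwards. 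More precisely, the relevant observation is that since $f$ is an isomorphism of arrows, its companion $f_!$ is invertible as a proarrow, with inverse (up to canonical iso) the companion $(f^{-1})_!$; and likewise for $g$. So the candidate inverse of $\alpha_!$ is the cell $(\alpha^{-1})_!$ associated via sliding to $\alpha^{-1}$, transported along the proarrow isomorphisms $f_! \cong (f^{-1})_!^{-1}$ and $g_! \cong (g^{-1})_!^{-1}$ so that the boundaries match up.

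First I would make precise the claim that for an isomorphism $f \colon x \to w$ with chosen companion $f_!$, the proarrow $f_!$ is invertible. This follows because $f^{-1}$ also has a companion (any choice; or transport the chosen one), and the sliding/companion calculus gives canonical cells $f_! \odot (f^{-1})_! \cong \id_x$ and $(f^{-1})_! \odot f_! \cong \id_w$ — one can obtain these by applying \cref{lem:sliding-globular} to the identity cells $\id_f$ and $\id_{f^{-1}}$, or simply by noting that a companion-conjoint pair makes $f_!$ left adjoint to $f^*$ in the underlying bicategory (\cref{rem:mates}), while $f$ being iso forces $f^* \cong f_!^{-1} \cong (f^{-1})_!$. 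Next, I would take $\alpha^{-1}$, slide it (in the globular form) using the chosen companions of $f^{-1}$ and $g^{-1}$, obtaining a cell $(\alpha^{-1})_!$ with boundary $n \odot (g^{-1})_! \Rightarrow (f^{-1})_! \odot m$. Then I would paste $(\alpha^{-1})_!$ with the invertibility cells above on the outside to produce a cell with boundary $f_! \odot n \Rightarrow m \odot g_!$, which is the needed candidate inverse to $\alpha_!$.

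The remaining step is to check that these two composites are identities. Here I expect the cleanest route is not a brute pasting computation but an appeal to the bijectivity already established in \cref{lem:sliding-general,lem:sliding-globular}: sliding is a bijection that is natural with respect to pasting, so composing the cell associated to $\alpha$ with the cell associated to $\alpha^{-1}$ corresponds, on the square-cell side, to the composite $\alpha \cdot \alpha^{-1}$ in one of the two directions — but one has to be careful, because $\alpha_!$ and $(\alpha^{-1})_!$ are slid ``in opposite directions'' (one slides $f, g$ down the right side, the other slides $f^{-1}, g^{-1}$, which is the same as sliding $f,g$ up). So the key lemma to isolate and prove is: \emph{sliding $\alpha$ down and then sliding the result back up returns $\alpha$}, i.e. the two sliding bijections attached to a companion and to the companion of the inverse arrow are mutually inverse up to the canonical proarrow isomorphisms. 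Granting this, $\alpha_! \odot (\text{candidate})$ and $(\text{candidate}) \odot \alpha_!$ both unwind to identity cells on $f_! \odot n$ and on $m \odot g_!$ respectively.

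I expect the main obstacle to be exactly this bookkeeping of orientations: keeping straight which companion (of $f$ or of $f^{-1}$) is used where, and verifying that the canonical comparison isomorphisms $f_! \odot (f^{-1})_! \cong \id$ are compatible with the companion units and counits in the way needed for the two pasting composites to collapse. This is where the defining triangle equations for companions, together with \cref{lem:sliding-general}, do the real work. The dual statement about cocommuters follows formally by \cref{rem:duality}, replacing $\dbl{D}$ by $\dbl{D}^{\mathrm{rev}}$ (or $\dbl{D}^{\op}$), under which companions become conjoints and commuters become cocommuters, so no separate argument is needed.
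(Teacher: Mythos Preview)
Your proposal is correct and would yield a valid proof, but it takes a more roundabout route than the paper's. The paper's key simplification is the observation that when $f$ is an isomorphism, the companion $f_!$ is \emph{already} a conjoint of $f^{-1}$, with explicit binding cells built from $\eta$, $\varepsilon$, and $\id_f^{-1}$. Given this, one applies the \emph{conjoint} form of sliding (\cref{lem:sliding-globular}, right-hand side) directly to $\alpha^{-1}$, using $f_! = (f^{-1})^*$ and $g_! = (g^{-1})^*$ as the conjoints; the resulting cell $(\alpha^{-1})^*$ already has boundary $f_! \odot n \Rightarrow m \odot g_!$, with no transport needed. Verifying that it inverts $\alpha_!$ is then a direct pasting calculation using the explicit binding cells.

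By contrast, you introduce separate companions $(f^{-1})_!$ and $(g^{-1})_!$, slide $\alpha^{-1}$ using those to get a cell with the ``wrong'' boundary, and then conjugate by the canonical isomorphisms $f_! \odot (f^{-1})_! \cong \id$, etc. This works, but the verification step you describe is essentially the internal functoriality of sliding (\cref{lem:sliding-functoriality-internal-globular}) specialized to $f' = f^{-1}$, $g' = g^{-1}$, together with the uniqueness of companions to identify the two choices of companion for $1_x$. Your ``key lemma'' about sliding down then up being mutually inverse is a slightly imprecise way of stating this; what is really being used is that the assignment $\alpha \mapsto \alpha_!$ is functorial in the vertical direction, so $(\alpha \cdot \alpha^{-1})_! = 1_m$ unwinds to the desired inverse relation after accounting for the transport. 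Both approaches ultimately rest on the companion triangle equations, but the paper's avoids choosing any auxiliary companions and keeps the pasting diagrams smaller. The dual statement is handled identically in both accounts, via \cref{rem:duality}.
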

\begin{proof}
  As observed in \cite[Lemma 3.20]{shulman2010}, if $f$ is an isomorphism with a
  companion $f_!$, then $f_!$ is also a conjoint of the inverse $f^{-1}$, via
  the unit and counit cells
  \begin{equation*}
    \begin{tikzcd}
      w & w \\
      x & x \\
      x & w
      \arrow["{f^{-1}}"', from=1-1, to=2-1]
      \arrow["{f^{-1}}", from=1-2, to=2-2]
      \arrow[""{name=0, anchor=center, inner sep=0}, "{\id_x}"', "\shortmid"{marking}, from=2-1, to=2-2]
      \arrow[""{name=1, anchor=center, inner sep=0}, "{\id_w}", "\shortmid"{marking}, from=1-1, to=1-2]
      \arrow["f", from=2-2, to=3-2]
      \arrow[""{name=2, anchor=center, inner sep=0}, "{f_!}"', from=3-1, to=3-2]
      \arrow[Rightarrow, no head, from=2-1, to=3-1]
      \arrow["{\id_f^{-1}}"{description}, draw=none, from=1, to=0]
      \arrow["\eta"{description, pos=0.6}, draw=none, from=0, to=2]
    \end{tikzcd}
    \qquad\text{and}\qquad
    \begin{tikzcd}
      x & w \\
      w & w \\
      x & x
      \arrow[""{name=0, anchor=center, inner sep=0}, "{f_!}", "\shortmid"{marking}, from=1-1, to=1-2]
      \arrow["f"', from=1-1, to=2-1]
      \arrow[""{name=1, anchor=center, inner sep=0}, "{\id_w}", "\shortmid"{marking}, from=2-1, to=2-2]
      \arrow[Rightarrow, no head, from=1-2, to=2-2]
      \arrow["{f^{-1}}"', from=2-1, to=3-1]
      \arrow["{f^{-1}}", from=2-2, to=3-2]
      \arrow[""{name=2, anchor=center, inner sep=0}, "{\id_x}"', "\shortmid"{marking}, from=3-1, to=3-2]
      \arrow["\varepsilon"{description, pos=0.4}, draw=none, from=0, to=1]
      \arrow["{\id_f^{-1}}"{description}, draw=none, from=1, to=2]
    \end{tikzcd}.
  \end{equation*}
  It is then straightforward to show that an inverse to the cell
  \begin{equation*}
    \begin{tikzcd}[row sep=scriptsize]
      x & y & z \\
      x & w & z
      \arrow["m", "\shortmid"{marking}, from=1-1, to=1-2]
      \arrow["{g_!}", "\shortmid"{marking}, from=1-2, to=1-3]
      \arrow[Rightarrow, no head, from=1-1, to=2-1]
      \arrow["{f_!}"', "\shortmid"{marking}, from=2-1, to=2-2]
      \arrow["n"', "\shortmid"{marking}, from=2-2, to=2-3]
      \arrow[Rightarrow, no head, from=1-3, to=2-3]
      \arrow["{\alpha_!}"{description}, draw=none, from=1-2, to=2-2]
    \end{tikzcd}
    \quad\coloneqq\quad
    \begin{tikzcd}[row sep=scriptsize]
      x & x & y & z \\
      x & w & z & z
      \arrow["f"', from=1-2, to=2-2]
      \arrow["g"', from=1-3, to=2-3]
      \arrow[""{name=0, anchor=center, inner sep=0}, "m", "\shortmid"{marking}, from=1-2, to=1-3]
      \arrow[""{name=1, anchor=center, inner sep=0}, "n"', "\shortmid"{marking}, from=2-2, to=2-3]
      \arrow[""{name=2, anchor=center, inner sep=0}, "{f_!}"', "\shortmid"{marking}, from=2-1, to=2-2]
      \arrow[""{name=3, anchor=center, inner sep=0}, "{\id_x}", "\shortmid"{marking}, from=1-1, to=1-2]
      \arrow[Rightarrow, no head, from=1-1, to=2-1]
      \arrow[""{name=4, anchor=center, inner sep=0}, "{g_!}", "\shortmid"{marking}, from=1-3, to=1-4]
      \arrow[Rightarrow, no head, from=1-4, to=2-4]
      \arrow[""{name=5, anchor=center, inner sep=0}, "{\id_z}"', "\shortmid"{marking}, from=2-3, to=2-4]
      \arrow["\eta"{description}, draw=none, from=3, to=2]
      \arrow["\alpha"{description}, draw=none, from=0, to=1]
      \arrow["\varepsilon"{description}, draw=none, from=4, to=5]
    \end{tikzcd}
  \end{equation*}
  is provided by the cell
  \begin{equation*}
    \begin{tikzcd}[row sep=scriptsize]
      x & w & z \\
      x & y & z
      \arrow["{f_!}", "\shortmid"{marking}, from=1-1, to=1-2]
      \arrow["n", "\shortmid"{marking}, from=1-2, to=1-3]
      \arrow["m"', "\shortmid"{marking}, from=2-1, to=2-2]
      \arrow["{g_!}"', "\shortmid"{marking}, from=2-2, to=2-3]
      \arrow[Rightarrow, no head, from=1-1, to=2-1]
      \arrow[Rightarrow, no head, from=1-3, to=2-3]
      \arrow["{(\alpha^{-1})^*}"{description}, draw=none, from=1-2, to=2-2]
    \end{tikzcd}
    \quad=\quad
    \begin{tikzcd}[row sep=scriptsize]
      x & w & z & z \\
      w & w & y & y \\
      x & x & y & z
      \arrow[""{name=0, anchor=center, inner sep=0}, "{f_!}", "\shortmid"{marking}, from=1-1, to=1-2]
      \arrow["f"', from=1-1, to=2-1]
      \arrow[""{name=1, anchor=center, inner sep=0}, "{\id_w}", "\shortmid"{marking}, from=2-1, to=2-2]
      \arrow[Rightarrow, no head, from=1-2, to=2-2]
      \arrow["{f^{-1}}"', from=2-1, to=3-1]
      \arrow["{f^{-1}}", from=2-2, to=3-2]
      \arrow[""{name=2, anchor=center, inner sep=0}, "{\id_x}"', "\shortmid"{marking}, from=3-1, to=3-2]
      \arrow[""{name=3, anchor=center, inner sep=0}, "n", "\shortmid"{marking}, from=1-2, to=1-3]
      \arrow[""{name=4, anchor=center, inner sep=0}, "{\id_z}", "\shortmid"{marking}, from=1-3, to=1-4]
      \arrow["{g^{-1}}"', from=1-3, to=2-3]
      \arrow[""{name=5, anchor=center, inner sep=0}, "m"', "\shortmid"{marking}, from=3-2, to=3-3]
      \arrow[Rightarrow, no head, from=2-3, to=3-3]
      \arrow["{g^{-1}}", from=1-4, to=2-4]
      \arrow[""{name=6, anchor=center, inner sep=0}, "{g_!}"', "\shortmid"{marking}, from=3-3, to=3-4]
      \arrow["g", from=2-4, to=3-4]
      \arrow[""{name=7, anchor=center, inner sep=0}, "{\id_y}"', "\shortmid"{marking}, from=2-3, to=2-4]
      \arrow["\varepsilon"{description, pos=0.4}, draw=none, from=0, to=1]
      \arrow["{\id_f^{-1}}"{description}, draw=none, from=1, to=2]
      \arrow["{\alpha^{-1}}"{description}, draw=none, from=3, to=5]
      \arrow["{\id_g^{-1}}"{description}, draw=none, from=4, to=7]
      \arrow["\eta"{description, pos=0.6}, draw=none, from=7, to=6]
    \end{tikzcd}.
    \qedhere
  \end{equation*}
\end{proof}

The sliding operations are functorial and natural in every sense that is well
typed. Such properties have often been tacitly used in the literature. For the
sake of completeness, we record the properties of sliding that we will need in a
series of simple but useful lemmas.

\begin{lemma}[External functoriality of sliding, globular]
  \label{lem:sliding-functoriality-external-globular}
  Suppose $f$, $g$, and $h$ are arrows with chosen companions in a double
  category. Then, under the bijection in \cref{lem:sliding-globular}, we have
  \begin{equation*}
    \begin{tikzcd}[sep=scriptsize]
      x & y & z \\
      {x'} & {y'} & {z'}
      \arrow["f"', from=1-1, to=2-1]
      \arrow["g"{description}, from=1-2, to=2-2]
      \arrow["h", from=1-3, to=2-3]
      \arrow[""{name=0, anchor=center, inner sep=0}, "m", "\shortmid"{marking}, from=1-1, to=1-2]
      \arrow[""{name=1, anchor=center, inner sep=0}, "n", "\shortmid"{marking}, from=1-2, to=1-3]
      \arrow[""{name=2, anchor=center, inner sep=0}, "{m'}"', "\shortmid"{marking}, from=2-1, to=2-2]
      \arrow[""{name=3, anchor=center, inner sep=0}, "{n'}"', "\shortmid"{marking}, from=2-2, to=2-3]
      \arrow["\alpha"{description}, draw=none, from=0, to=2]
      \arrow["\beta"{description}, draw=none, from=1, to=3]
    \end{tikzcd}
    \quad\leadsto\quad
    \begin{tikzcd}[sep=scriptsize]
      x & y & z & {z'} \\
      x & y & {y'} & {z'} \\
      x & {x'} & {y'} & {z'}
      \arrow["{g_!}", "\shortmid"{marking}, from=2-2, to=2-3]
      \arrow[""{name=0, anchor=center, inner sep=0}, "m"', "\shortmid"{marking}, from=2-1, to=2-2]
      \arrow["{f_!}"', "\shortmid"{marking}, from=3-1, to=3-2]
      \arrow["{m'}"', "\shortmid"{marking}, from=3-2, to=3-3]
      \arrow[Rightarrow, no head, from=2-1, to=3-1]
      \arrow[Rightarrow, no head, from=2-3, to=3-3]
      \arrow[Rightarrow, no head, from=1-2, to=2-2]
      \arrow["n", "\shortmid"{marking}, from=1-2, to=1-3]
      \arrow[""{name=1, anchor=center, inner sep=0}, "m", "\shortmid"{marking}, from=1-1, to=1-2]
      \arrow[Rightarrow, no head, from=1-1, to=2-1]
      \arrow[""{name=2, anchor=center, inner sep=0}, "{n'}", "\shortmid"{marking}, from=2-3, to=2-4]
      \arrow[Rightarrow, no head, from=2-4, to=3-4]
      \arrow[Rightarrow, no head, from=1-4, to=2-4]
      \arrow["{h_!}", "\shortmid"{marking}, from=1-3, to=1-4]
      \arrow["{\alpha_!}"{description}, draw=none, from=2-2, to=3-2]
      \arrow[""{name=3, anchor=center, inner sep=0}, "{n'}"', "\shortmid"{marking}, from=3-3, to=3-4]
      \arrow["{\beta_!}"{description}, draw=none, from=1-3, to=2-3]
      \arrow["{1_m}"{description}, draw=none, from=1, to=0]
      \arrow["{1_{n'}}"{description}, draw=none, from=2, to=3]
    \end{tikzcd}
    \qquad\text{and}\qquad
    \begin{tikzcd}[sep=scriptsize]
      x & x \\
      {x'} & {x'}
      \arrow["f"', from=1-1, to=2-1]
      \arrow[""{name=0, anchor=center, inner sep=0}, "{\id_x}", "\shortmid"{marking}, from=1-1, to=1-2]
      \arrow["f", from=1-2, to=2-2]
      \arrow[""{name=1, anchor=center, inner sep=0}, "{\id_{x'}}"', "\shortmid"{marking}, from=2-1, to=2-2]
      \arrow["{\id_f}"{description}, draw=none, from=0, to=1]
    \end{tikzcd}
    \;\leadsto\;
    \begin{tikzcd}[sep=scriptsize]
      x & {x'} \\
      x & {x'}
      \arrow[""{name=0, anchor=center, inner sep=0}, "{f_!}", "\shortmid"{marking}, from=1-1, to=1-2]
      \arrow[Rightarrow, no head, from=1-1, to=2-1]
      \arrow[Rightarrow, no head, from=1-2, to=2-2]
      \arrow[""{name=1, anchor=center, inner sep=0}, "{f_!}"', "\shortmid"{marking}, from=2-1, to=2-2]
      \arrow["{1_{f_!}}"{description}, draw=none, from=0, to=1]
    \end{tikzcd}.
  \end{equation*}
  In particular, the external identity cell $\id_f$ is a commuter.
\end{lemma}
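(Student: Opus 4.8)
The plan is to run everything through the explicit formula for the sliding bijection that is already visible in the proof of \cref{lem:sliding-general}. Taking the outer arrows there to be identities, as in the first bijection of \cref{lem:sliding-globular}, the assignment \cref{eq:sliding-companion} collapses to the following: the square cell $\alpha$ with left leg $f$ and right leg $g$ is sent to the globular cell
\[
  \alpha_! \;=\; \eta_f \odot \alpha \odot \varepsilon_g ,
\]
pasting the chosen companion unit $\eta_f$ of $f$ on the left and the chosen companion counit $\varepsilon_g$ of $g$ on the right, up to the evident unitors. Both assertions of the lemma then reduce to short manipulations in the pasting calculus for cells, using only the interchange law and the defining equations of a companion.

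For the composition law I would expand the right-hand pasting of the lemma as the vertical composite $(1_m \odot \beta_!) \cdot (\alpha_! \odot 1_{n'}) = (1_m \odot \eta_g \odot \beta \odot \varepsilon_h) \cdot (\eta_f \odot \alpha \odot \varepsilon_g \odot 1_{n'})$, pad each factor with an identity cell on an identity proarrow so that the shared middle boundary $m \odot g_! \odot n'$ is presented by the common subdivision $\id_x \mid m \mid g_! \mid n' \mid \id_{z'}$, and apply the interchange law once. Four of the resulting five columnwise vertical composites simplify because composing with a vertical identity does nothing (for instance $1_m \cdot \alpha = \alpha$ and $\beta \cdot 1_{n'} = \beta$), and the fifth, in the middle column, is $\eta_g \cdot \varepsilon_g = \id_g$ by the second companion equation for $g$. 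What remains is $\eta_f \odot \alpha \odot \id_g \odot \beta \odot \varepsilon_h$, and since whiskering by $\id_g$ acts as a unitor this is $\eta_f \odot (\alpha \odot \beta) \odot \varepsilon_h = (\alpha \odot \beta)_!$ by the formula above applied to $\alpha \odot \beta$, whose legs are $f$ and $h$. For the identity law, sliding $\id_f$ yields $\eta_f \odot \id_f \odot \varepsilon_f$; whiskering by the external identity cell $\id_f$, whose horizontal boundaries are identity proarrows, is again a unitor, so this reduces to $\eta_f \odot \varepsilon_f$, which is $1_{f_!}$ by the first companion equation for $f$. As $1_{f_!}$ is invertible, $\id_f$ is a commuter in the sense of \cref{rem:commuters}.

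The one genuine nuisance is bookkeeping: the cells $\alpha_!$, $\beta_!$ and the whiskers $1_m$, $1_{n'}$ are ``staircases,'' so their horizontal boundaries do not subdivide into aligned slots, and one must insert the appropriate unit cells and carry the unitor isomorphisms before the interchange law applies on the nose. These unitors are, as everywhere in the paper, suppressed notationally; a fully rigorous treatment either invokes the coherence theorem for pseudo double categories or discharges the handful of unit coherences by hand. Nothing conceptual is at stake — this is the usual price of weak external composition — so I would present the argument at the level of the two displayed pastings and leave the unitor manipulations implicit, consistently with the neighbouring lemmas. (A slightly slicker alternative would avoid the explicit formula altogether: characterize $\alpha_!$ by its universal property and verify that the right-hand pasting satisfies the same property, concluding by uniqueness; but since the explicit formula is needed elsewhere, the direct computation is the more economical choice here.)
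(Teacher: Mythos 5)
Your proof is correct and is essentially the paper's own argument: both reduce to the explicit sliding formula of \cref{eq:sliding-companion}, use interchange together with the companion identity $\eta\cdot\varepsilon=\id_g$ to relate the pasting of $\alpha_!$ and $\beta_!$ to $(\alpha\odot\beta)_!$, and dispose of the identity case via $\eta\odot\varepsilon=1_{f_!}$. The only difference is that you contract the pasted side while the paper expands $(\alpha\odot\beta)_!$ by inserting $\id_g=\eta\cdot\varepsilon$, which is the same computation read in the opposite direction.
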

\begin{proof}
  Compose the cell $\alpha \odot \beta$ on the left with $\eta$ and on the right with $\varepsilon$, and
  insert the identity $\id_g = \eta \cdot \varepsilon$ in the middle, to obtain the equation
  \begin{equation*}
    \begin{tikzcd}[column sep=scriptsize]
      x & x & y & z & {z'} \\
      x & {x'} & {y'} & {z'} & {z'}
      \arrow["f"', from=1-2, to=2-2]
      \arrow["g"{description}, from=1-3, to=2-3]
      \arrow["h", from=1-4, to=2-4]
      \arrow[""{name=0, anchor=center, inner sep=0}, "m", "\shortmid"{marking}, from=1-2, to=1-3]
      \arrow[""{name=1, anchor=center, inner sep=0}, "n", "\shortmid"{marking}, from=1-3, to=1-4]
      \arrow[""{name=2, anchor=center, inner sep=0}, "{m'}"', "\shortmid"{marking}, from=2-2, to=2-3]
      \arrow[""{name=3, anchor=center, inner sep=0}, "{n'}"', "\shortmid"{marking}, from=2-3, to=2-4]
      \arrow[""{name=4, anchor=center, inner sep=0}, "{\id_x}", "\shortmid"{marking}, from=1-1, to=1-2]
      \arrow[""{name=5, anchor=center, inner sep=0}, "{f_!}"', "\shortmid"{marking}, from=2-1, to=2-2]
      \arrow[Rightarrow, no head, from=1-1, to=2-1]
      \arrow[""{name=6, anchor=center, inner sep=0}, "{h_!}", "\shortmid"{marking}, from=1-4, to=1-5]
      \arrow[Rightarrow, no head, from=1-5, to=2-5]
      \arrow[""{name=7, anchor=center, inner sep=0}, "{\id_{z'}}"', "\shortmid"{marking}, from=2-4, to=2-5]
      \arrow["\alpha"{description}, draw=none, from=0, to=2]
      \arrow["\beta"{description}, draw=none, from=1, to=3]
      \arrow["\eta"{description}, draw=none, from=4, to=5]
      \arrow["\varepsilon"{description}, draw=none, from=6, to=7]
    \end{tikzcd}
    \quad=\quad
    \begin{tikzcd}[sep=scriptsize]
      x & x & y & y & z & {z'} \\
      x & x & y & {y'} & {z'} & {z'} \\
      x & {x'} & {y'} & {y'} & {z'} & {z'}
      \arrow["f"', from=2-2, to=3-2]
      \arrow["h", from=1-5, to=2-5]
      \arrow[""{name=0, anchor=center, inner sep=0}, "n", "\shortmid"{marking}, from=1-4, to=1-5]
      \arrow[""{name=1, anchor=center, inner sep=0}, "{m'}"', "\shortmid"{marking}, from=3-2, to=3-3]
      \arrow[""{name=2, anchor=center, inner sep=0}, "{n'}", "\shortmid"{marking}, from=2-4, to=2-5]
      \arrow[""{name=3, anchor=center, inner sep=0}, "m", "\shortmid"{marking}, from=2-2, to=2-3]
      \arrow[""{name=4, anchor=center, inner sep=0}, "{\id_y}", "\shortmid"{marking}, from=1-3, to=1-4]
      \arrow["g", from=1-4, to=2-4]
      \arrow[""{name=5, anchor=center, inner sep=0}, "{g_!}", "\shortmid"{marking}, from=2-3, to=2-4]
      \arrow["g", from=2-3, to=3-3]
      \arrow[Rightarrow, no head, from=1-3, to=2-3]
      \arrow[""{name=6, anchor=center, inner sep=0}, "m", "\shortmid"{marking}, from=1-2, to=1-3]
      \arrow[Rightarrow, no head, from=2-4, to=3-4]
      \arrow[""{name=7, anchor=center, inner sep=0}, "{\id_{y'}}"', "\shortmid"{marking}, from=3-3, to=3-4]
      \arrow[""{name=8, anchor=center, inner sep=0}, "{f_!}"', "\shortmid"{marking}, from=3-1, to=3-2]
      \arrow[""{name=9, anchor=center, inner sep=0}, "{\id_x}", "\shortmid"{marking}, from=2-1, to=2-2]
      \arrow[Rightarrow, no head, from=2-1, to=3-1]
      \arrow[Rightarrow, no head, from=1-1, to=2-1]
      \arrow[Rightarrow, no head, from=1-2, to=2-2]
      \arrow[""{name=10, anchor=center, inner sep=0}, "{\id_x}", "\shortmid"{marking}, from=1-1, to=1-2]
      \arrow[Rightarrow, no head, from=2-5, to=3-5]
      \arrow[""{name=11, anchor=center, inner sep=0}, "{n'}"', "\shortmid"{marking}, from=3-4, to=3-5]
      \arrow[""{name=12, anchor=center, inner sep=0}, "{h_!}", "\shortmid"{marking}, from=1-5, to=1-6]
      \arrow[Rightarrow, no head, from=1-6, to=2-6]
      \arrow[""{name=13, anchor=center, inner sep=0}, "{\id_{z'}}", "\shortmid"{marking}, from=2-5, to=2-6]
      \arrow[Rightarrow, no head, from=2-6, to=3-6]
      \arrow[""{name=14, anchor=center, inner sep=0}, "{\id_{z'}}"', "\shortmid"{marking}, from=3-5, to=3-6]
      \arrow["\beta"{description}, draw=none, from=0, to=2]
      \arrow["\alpha"{description}, draw=none, from=3, to=1]
      \arrow["\eta"{description}, draw=none, from=4, to=5]
      \arrow["\varepsilon"{description}, draw=none, from=5, to=7]
      \arrow["\eta"{description}, draw=none, from=9, to=8]
      \arrow["{1_m}"{description}, draw=none, from=6, to=3]
      \arrow["{1_{\id_x}}"{description}, draw=none, from=10, to=9]
      \arrow["{1_{n'}}"{description}, draw=none, from=2, to=11]
      \arrow["\varepsilon"{description}, draw=none, from=12, to=13]
      \arrow["{1_{\id_{z'}}}"{description}, draw=none, from=13, to=14]
    \end{tikzcd}.
  \end{equation*}
  Comparing with the assignment $\alpha \mapsto \alpha_!$ in \cref{eq:sliding-companion} proves
  the first statement. The second statement amounts to the defining equation
  $\eta \odot \varepsilon = 1_{f_!}$.
\end{proof}

A transpose of this lemma is also true:

\begin{lemma}[Internal functoriality of sliding, special]
  \label{lem:sliding-functoriality-internal-special}
  Suppose $h$, $h'$, and $h''$ are arrows with chosen companions in a double
  category. Then, under the bijection in \cref{lem:sliding-general}, we have
  \begin{equation*}
    \begin{tikzcd}
      x & y \\
      {x'} & {y'} \\
      {x''} & {y''}
      \arrow[""{name=0, anchor=center, inner sep=0}, "{h_!}", "\shortmid"{marking}, from=1-1, to=1-2]
      \arrow[""{name=1, anchor=center, inner sep=0}, "{h'_!}", "\shortmid"{marking}, from=2-1, to=2-2]
      \arrow[""{name=2, anchor=center, inner sep=0}, "{h''_!}"', "\shortmid"{marking}, from=3-1, to=3-2]
      \arrow["f"', from=1-1, to=2-1]
      \arrow["g", from=1-2, to=2-2]
      \arrow["{f'}"', from=2-1, to=3-1]
      \arrow["{g'}", from=2-2, to=3-2]
      \arrow["{\alpha_!}"{description, pos=0.4}, draw=none, from=0, to=1]
      \arrow["{\alpha'_!}"{description}, draw=none, from=1, to=2]
    \end{tikzcd}
    \quad\leadsto\quad
    \begin{tikzcd}[row sep=scriptsize]
      x & x & x \\
      {x'} & {x'} & y \\
      {x''} & {y'} & {y'} \\
      {y''} & {y''} & {y''}
      \arrow[""{name=0, anchor=center, inner sep=0}, "{\id_x}", "\shortmid"{marking}, from=1-1, to=1-2]
      \arrow["f"', from=1-1, to=2-1]
      \arrow["f", from=1-2, to=2-2]
      \arrow[""{name=1, anchor=center, inner sep=0}, "{\id_{x'}}"', "\shortmid"{marking}, from=2-1, to=2-2]
      \arrow["{f'}"', from=2-1, to=3-1]
      \arrow["{h''}"', from=3-1, to=4-1]
      \arrow["{g'}"', from=3-2, to=4-2]
      \arrow[""{name=2, anchor=center, inner sep=0}, "{\id_{y''}}"', "\shortmid"{marking}, from=4-1, to=4-2]
      \arrow["{h'}"', from=2-2, to=3-2]
      \arrow[""{name=3, anchor=center, inner sep=0}, "{\id_x}", "\shortmid"{marking}, from=1-2, to=1-3]
      \arrow[""{name=4, anchor=center, inner sep=0}, "{\id_{y''}}"', "\shortmid"{marking}, from=4-2, to=4-3]
      \arrow["{g'}", from=3-3, to=4-3]
      \arrow[""{name=5, anchor=center, inner sep=0}, "{\id_{y'}}", "\shortmid"{marking}, from=3-2, to=3-3]
      \arrow["h", from=1-3, to=2-3]
      \arrow["g", from=2-3, to=3-3]
      \arrow["{\id_f}"{description}, draw=none, from=0, to=1]
      \arrow["{\alpha'}"{description}, draw=none, from=1, to=2]
      \arrow["\alpha"{description}, draw=none, from=3, to=5]
      \arrow["{\id_{g'}}"{description}, draw=none, from=5, to=4]
    \end{tikzcd}
    \qquad\text{and}\qquad
    \begin{tikzcd}[sep=scriptsize]
      x & y \\
      x & y
      \arrow[""{name=0, anchor=center, inner sep=0}, "{h_!}", "\shortmid"{marking}, from=1-1, to=1-2]
      \arrow[Rightarrow, no head, from=1-1, to=2-1]
      \arrow[Rightarrow, no head, from=1-2, to=2-2]
      \arrow[""{name=1, anchor=center, inner sep=0}, "{h_!}"', "\shortmid"{marking}, from=2-1, to=2-2]
      \arrow["{1_{h_!}}"{description}, draw=none, from=0, to=1]
    \end{tikzcd}
    \;\leadsto\;
    \begin{tikzcd}[sep=scriptsize]
      x & x \\
      y & y
      \arrow["h"', from=1-1, to=2-1]
      \arrow[""{name=0, anchor=center, inner sep=0}, "{\id_x}", "\shortmid"{marking}, from=1-1, to=1-2]
      \arrow["h", from=1-2, to=2-2]
      \arrow[""{name=1, anchor=center, inner sep=0}, "{\id_y}"', "\shortmid"{marking}, from=2-1, to=2-2]
      \arrow["{\id_h}"{description}, draw=none, from=0, to=1]
    \end{tikzcd}.
  \end{equation*}
\end{lemma}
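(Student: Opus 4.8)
These two claims are the internal analogues, reshaping along the direction of arrows, of the two claims in \cref{lem:sliding-functoriality-external-globular}, and I would prove them by the same device.

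For the first claim, observe that the internal composite $\alpha_!\cdot\alpha'_!$ has the shape of a reshaped cell, so by the bijection of \cref{lem:sliding-general} it equals $\beta_!$ for a unique ``tall'' cell $\beta\colon\id_x\Rightarrow\id_{y''}$ with frames $h''\circ f'\circ f$ and $g'\circ g\circ h$; and by \eqref{eq:sliding-companion-inv} this $\beta$ is recovered from $\alpha_!\cdot\alpha'_!$ by pasting the companion unit $\eta_h$ on top and the companion counit $\varepsilon_{h''}$ on the bottom. It therefore suffices to check that this paste equals the right-hand pasting of $\id_f$, $\alpha'$, $\alpha$, and $\id_{g'}$. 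To compute the left side of that equation I would expand $\alpha_!$ and $\alpha'_!$ in turn by \eqref{eq:sliding-companion}, writing each as a rectangular pasting of $\alpha$ (respectively $\alpha'$) with the units and counits of the companions of $h$ and $h'$ (respectively $h'$ and $h''$) and some external-identity cells. The resulting pasting of $\eta_h$, these two expansions, and $\varepsilon_{h''}$ then exhibits three contiguous subpastings of the form $\eta\cdot\varepsilon$: one for $h$, where the $\eta_h$ supplied by \eqref{eq:sliding-companion-inv} meets the $\varepsilon_h$ occurring inside $\alpha_!$; one for $h'$, where the $\eta_{h'}$ at the foot of $\alpha_!$ meets the $\varepsilon_{h'}$ at the head of $\alpha'_!$, joined along the proarrow $h'_!$; and one for $h''$, where the $\eta_{h''}$ inside $\alpha'_!$ meets the $\varepsilon_{h''}$ supplied by \eqref{eq:sliding-companion-inv}. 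Each collapses to an external-identity cell by the triangle identity $\eta\cdot\varepsilon=\id$, and absorbing those cells---using functoriality of $\id_{(-)}$ and coherence of the unitors---leaves exactly the right-hand pasting. The second claim is the degenerate case in which both cells are identities: by \eqref{eq:sliding-companion-inv} the inverse slide of $1_{h_!}$ is $\eta_h\cdot 1_{h_!}\cdot\varepsilon_h=\eta_h\cdot\varepsilon_h$, which equals $\id_h$ by the same triangle identity---equivalently, it is the reverse of the statement $\id_f\leadsto 1_{f_!}$ in \cref{lem:sliding-functoriality-external-globular}.

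The conceptual content is thus just three applications of a triangle identity; the real work lies entirely in verifying that the three $\eta\cdot\varepsilon$ patterns genuinely occur as contiguous subpastings. I expect the middle one, for $h'$, to be the main obstacle. In the expansion of $\alpha_!$ the proarrow $h'_!$ lies at the left end of the bottom boundary (being the output of $\eta_{h'}$), whereas in the expansion of $\alpha'_!$ it lies at the right end of the top boundary (being the input of $\varepsilon_{h'}$); so before the triangle identity becomes visible one must slide these two cells past the intervening identity proarrows, via the unitors, until $\eta_{h'}$ comes to lie directly above $\varepsilon_{h'}$. Everything else is routine interchange and coherence.
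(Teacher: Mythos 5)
Your proposal is correct and takes essentially the paper's route: the paper likewise evaluates the inverse slide of $\alpha_!\cdot\alpha'_!$ by pre- and post-composing with $\eta$ and $\varepsilon$ as in \eqref{eq:sliding-companion-inv} and reduces everything to the companion identities, the only (cosmetic) difference being that it inserts $1_{h'_!}=\eta\odot\varepsilon$ in the middle and regroups, whereas you expand $\alpha_!$ and $\alpha'_!$ via \eqref{eq:sliding-companion} and cancel three vertical $\eta\cdot\varepsilon$ pairs. Your handling of the second claim via $\eta\cdot\varepsilon=\id_h$ is exactly the paper's, and the unitor realignment you flag for the middle cancellation is precisely the routine suppressed-unitor bookkeeping the paper's convention allows.
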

\begin{proof}
  Dually to \cref{lem:sliding-functoriality-external-globular}, the first
  statement is proved by pre-composing the cell $\alpha_! \cdot \alpha'_!$ with
  $\eta$, post-composing with $\varepsilon$, and inserting the identity $1_{h'_!}= \eta \odot \varepsilon$ in
  the middle. The second statement amounts to the defining equation
  $\eta \cdot \varepsilon = \id_h$.
\end{proof}

The next two lemmas use the functoriality of companions.

\begin{lemma}[External functoriality of sliding, special]
  \label{lem:sliding-functoriality-external-special}
  Suppose $x \xto{k} y \xto{\ell} z$ and $x' \xto{k'} y' \xto{\ell'} z'$ are arrows
  with chosen companions in a double category, and choose the companions of
  their composites to be $k_! \odot \ell_!$ and $k'_! \odot \ell'_!$, respectively. Also,
  choose the companions of the identities $1_x$ and $1_{x'}$ to be $\id_x$ and
  $\id_{x'}$. Then, under the bijection in \cref{lem:sliding-general}, we have
  \begin{equation*}
    \begin{tikzcd}
      x & y & z \\
      {x'} & {y'} & {z'}
      \arrow["f"', from=1-1, to=2-1]
      \arrow["g"{description}, from=1-2, to=2-2]
      \arrow["h", from=1-3, to=2-3]
      \arrow[""{name=0, anchor=center, inner sep=0}, "{k_!}", "\shortmid"{marking}, from=1-1, to=1-2]
      \arrow[""{name=1, anchor=center, inner sep=0}, "{k'_!}"', "\shortmid"{marking}, from=2-1, to=2-2]
      \arrow[""{name=2, anchor=center, inner sep=0}, "{\ell_!}", "\shortmid"{marking}, from=1-2, to=1-3]
      \arrow[""{name=3, anchor=center, inner sep=0}, "{\ell'_!}"', "\shortmid"{marking}, from=2-2, to=2-3]
      \arrow["{\alpha_!}"{description}, draw=none, from=0, to=1]
      \arrow["{\beta_!}"{description}, draw=none, from=2, to=3]
    \end{tikzcd}
    \;\leadsto\;
    \begin{tikzcd}[row sep=scriptsize]
      x & x & x \\
      {x'} & y & y \\
      {y'} & {y'} & z \\
      {z'} & {z'} & {z'}
      \arrow["f"', from=1-1, to=2-1]
      \arrow["g"', from=2-2, to=3-2]
      \arrow["h", from=3-3, to=4-3]
      \arrow["k", from=1-2, to=2-2]
      \arrow["{k'}"', from=2-1, to=3-1]
      \arrow["{\ell'}"', from=3-2, to=4-2]
      \arrow["{\ell'}"', from=3-1, to=4-1]
      \arrow[""{name=0, anchor=center, inner sep=0}, "{\id_{y'}}", "\shortmid"{marking}, from=3-1, to=3-2]
      \arrow[""{name=1, anchor=center, inner sep=0}, "{\id_x}", "\shortmid"{marking}, from=1-1, to=1-2]
      \arrow["\ell", from=2-3, to=3-3]
      \arrow[""{name=2, anchor=center, inner sep=0}, "{\id_{z'}}"', "\shortmid"{marking}, from=4-1, to=4-2]
      \arrow[""{name=3, anchor=center, inner sep=0}, "{\id_y}"', "\shortmid"{marking}, from=2-2, to=2-3]
      \arrow[""{name=4, anchor=center, inner sep=0}, "{\id_{z'}}"', "\shortmid"{marking}, from=4-2, to=4-3]
      \arrow["k", from=1-3, to=2-3]
      \arrow[""{name=5, anchor=center, inner sep=0}, "{\id_x}", "\shortmid"{marking}, from=1-2, to=1-3]
      \arrow["\alpha"{description}, draw=none, from=1, to=0]
      \arrow["{\id_{\ell'}}"{description}, draw=none, from=0, to=2]
      \arrow["\beta"{description}, draw=none, from=3, to=4]
      \arrow["{\id_k}"{description}, draw=none, from=5, to=3]
    \end{tikzcd}
    \qquad\text{and}\qquad
    \begin{tikzcd}[sep=scriptsize]
      x & x \\
      {x'} & {x'}
      \arrow["f"', from=1-1, to=2-1]
      \arrow[""{name=0, anchor=center, inner sep=0}, "{\id_x}", "\shortmid"{marking}, from=1-1, to=1-2]
      \arrow["f", from=1-2, to=2-2]
      \arrow[""{name=1, anchor=center, inner sep=0}, "{\id_{x'}}"', "\shortmid"{marking}, from=2-1, to=2-2]
      \arrow["{\id_f}"{description}, draw=none, from=0, to=1]
    \end{tikzcd}
    \;\leadsto\;
    \begin{tikzcd}[sep=scriptsize]
      x & x \\
      {x'} & x \\
      {x'} & {x'}
      \arrow["{1_{x'}}"', from=2-1, to=3-1]
      \arrow["f"', from=1-1, to=2-1]
      \arrow[""{name=0, anchor=center, inner sep=0}, "{\id_x}", "\shortmid"{marking}, from=1-1, to=1-2]
      \arrow["{1_x}", from=1-2, to=2-2]
      \arrow["{f'}", from=2-2, to=3-2]
      \arrow[""{name=1, anchor=center, inner sep=0}, "{\id_{x'}}"', "\shortmid"{marking}, from=3-1, to=3-2]
      \arrow["{\id_f}"{description}, draw=none, from=0, to=1]
    \end{tikzcd}.
  \end{equation*}
\end{lemma}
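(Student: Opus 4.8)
The plan is to verify both identities directly from the explicit formula \cref{eq:sliding-companion} for the correspondence of \cref{lem:sliding-general}: since that correspondence is a bijection, it is enough to check that the formula, applied to each right-hand cell, returns the left-hand one.

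First I would dispatch a remark about the stipulated choices. The proarrow $k_! \odot \ell_!$ is a companion of $k \cdot \ell$ in the standard way --- companions being closed under composition --- with unit and counit the evident composites of the units and counits of $k$ and $\ell$, suitably whiskered by $k_!$ or $\ell_!$ and mediated by the unit isomorphisms for $\id_x$ and $\id_y$; the triangle identities for the composite reduce to those for $k$ and $\ell$ by interchange, and likewise on the primed side. For $1_x$ and $1_{x'}$, with the chosen companions $\id_x$ and $\id_{x'}$, the unit and counit are identity cells. With this in hand the second identity is immediate: the \cref{eq:sliding-companion}-expansion of the staircase assigned to $\id_f$ is the degenerate instance in which the companions being slid are those of $1_{x'}$ and $1_x$, so the cells $\eta$ and $\varepsilon$ occurring in it are identities and the pasting collapses to $\id_f$ by the unit coherence axioms.

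For the first identity, write $\gamma$ for the right-hand pasting of $\alpha$, $\id_k$, $\beta$, and $\id_{\ell'}$, and expand its correspondent $\gamma_!$ by \cref{eq:sliding-companion}: it is the pasting of $\id_f$, the unit of $k'_! \odot \ell'_!$, the cell $\gamma$, the counit of $k_! \odot \ell_!$, and $\id_h$. I would then substitute the pasting presentations of that unit and counit from the remark, absorb the external and internal identity cells, rebalance the unit isomorphisms, and reassociate by interchange; the result should be recognizable as the horizontal composite of the \cref{eq:sliding-companion}-expansion of $\alpha$ --- the pasting of $\id_f$, the unit of $k'_!$, $\alpha$, the counit of $k_!$, and $\id_g$ --- with the analogous expansion of $\beta$, which is exactly $\alpha_! \odot \beta_!$.

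The hard part will be matching these two pasting diagrams along the shared boundary $g$: since $\alpha_!$ and $\beta_!$ are general staircases rather than globular cells, their internal subdivisions do not line up there, each factor contributing its own external identity cell on $g$. The bookkeeping consists in recognizing these two cells, together with the part of $\gamma$ lying over $g$ (the cell $\id_k$ and the pertinent edges of $\alpha$ and $\beta$), as the images under the substitution of the counit of $k_!$, the unit of $\ell'_!$, and the unit isomorphisms coming from the composite-companion presentations. No tool beyond interchange, the unit coherences, and the companion axioms is required; the argument is the ``external, special'' analogue of the pair \cref{lem:sliding-functoriality-external-globular} and \cref{lem:sliding-functoriality-internal-special}.
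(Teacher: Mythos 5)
Your proposal is correct and is in substance the paper's own argument: both proofs rest on equipping $k_! \odot \ell_!$ and $k'_! \odot \ell'_!$ with the composite binding cells (and $\id_x$, $\id_{x'}$ with identity binding cells) and then verifying the stated correspondence by an interchange/companion-identity calculation, at the same level of detail. The only difference is the direction of verification: you apply the forward assignment \cref{eq:sliding-companion} to the right-hand cells and invoke bijectivity, whereas the paper applies the inverse assignment \cref{eq:sliding-companion-inv} to $\alpha_! \odot \beta_!$ (citing Shulman's Lemmas 3.12--3.13 for the chosen binding cells); the two checks are transposes of one another.
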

\begin{proof}
  By \cite[Lemma 3.13]{shulman2010}, the composite $k_! \odot \ell_!$ is indeed a
  companion for $k \cdot \ell$, with unit and counit cells
  \begin{equation*}
    \begin{tikzcd}[row sep=scriptsize]
      x & x & x \\
      x & y & y \\
      x & y & z
      \arrow["k", from=1-2, to=2-2]
      \arrow[""{name=0, anchor=center, inner sep=0}, "{k_!}"', "\shortmid"{marking}, from=2-1, to=2-2]
      \arrow[Rightarrow, no head, from=1-1, to=2-1]
      \arrow[""{name=1, anchor=center, inner sep=0}, "\shortmid"{marking}, Rightarrow, no head, from=1-1, to=1-2]
      \arrow[""{name=2, anchor=center, inner sep=0}, "{\ell_!}"', "\shortmid"{marking}, from=3-2, to=3-3]
      \arrow["\ell", from=2-3, to=3-3]
      \arrow["k", from=1-3, to=2-3]
      \arrow[""{name=3, anchor=center, inner sep=0}, "\shortmid"{marking}, Rightarrow, no head, from=2-2, to=2-3]
      \arrow[""{name=4, anchor=center, inner sep=0}, "\shortmid"{marking}, Rightarrow, no head, from=1-2, to=1-3]
      \arrow[Rightarrow, no head, from=2-2, to=3-2]
      \arrow[""{name=5, anchor=center, inner sep=0}, "{k_!}"', "\shortmid"{marking}, from=3-1, to=3-2]
      \arrow[Rightarrow, no head, from=2-1, to=3-1]
      \arrow["\eta"{description}, draw=none, from=1, to=0]
      \arrow["{\id_k}"{description}, draw=none, from=4, to=3]
      \arrow["\eta"{description}, draw=none, from=3, to=2]
      \arrow["{1_{k_!}}"{description, pos=0.6}, draw=none, from=0, to=5]
    \end{tikzcd}
    \qquad\text{and}\qquad
    \begin{tikzcd}[row sep=scriptsize]
      x & y & z \\
      y & y & z \\
      z & z & z
      \arrow[""{name=0, anchor=center, inner sep=0}, "{k_!}", "\shortmid"{marking}, from=1-1, to=1-2]
      \arrow["k"', from=1-1, to=2-1]
      \arrow[""{name=1, anchor=center, inner sep=0}, "\shortmid"{marking}, Rightarrow, no head, from=2-1, to=2-2]
      \arrow[Rightarrow, no head, from=1-2, to=2-2]
      \arrow[""{name=2, anchor=center, inner sep=0}, "{\ell_!}", "\shortmid"{marking}, from=1-2, to=1-3]
      \arrow[Rightarrow, no head, from=1-3, to=2-3]
      \arrow[""{name=3, anchor=center, inner sep=0}, "{\ell_!}", "\shortmid"{marking}, from=2-2, to=2-3]
      \arrow["\ell"', from=2-1, to=3-1]
      \arrow[""{name=4, anchor=center, inner sep=0}, "\shortmid"{marking}, Rightarrow, no head, from=3-1, to=3-2]
      \arrow["\ell"', from=2-2, to=3-2]
      \arrow[Rightarrow, no head, from=2-3, to=3-3]
      \arrow[""{name=5, anchor=center, inner sep=0}, "\shortmid"{marking}, Rightarrow, no head, from=3-2, to=3-3]
      \arrow["\varepsilon"{description}, draw=none, from=0, to=1]
      \arrow["{1_{\ell_!}}"{description, pos=0.4}, draw=none, from=2, to=3]
      \arrow["{\id_\ell}"{description}, draw=none, from=1, to=4]
      \arrow["\varepsilon"{description}, draw=none, from=3, to=5]
    \end{tikzcd}.
  \end{equation*}
  The first statement is proved by pre-composing the cell
  $\alpha_! \odot \beta_!$ with the unit for $k_! \odot \ell_!$, post-composing
  with the counit for $k'_! \odot \ell'_!$, and using the inverse assignment
  $\alpha_! \mapsto \alpha$ in \cref{eq:sliding-companion-inv}. The second
  statement is proved similarly in view of \cite[\mbox{Lemma
    3.12}]{shulman2010}.
\end{proof}

\begin{lemma}[Internal functoriality of sliding, globular]
  \label{lem:sliding-functoriality-internal-globular}
  Suppose $x \xto{f} x' \xto{f'} x''$ and $y \xto{g} y' \xto{g'} y''$ are arrows
  with chosen companions in a double category, and choose the companions of
  their composites to be the composites of their companions, and likewise for
  identities. Then, under the bijection in \cref{lem:sliding-globular}, we have
  \begin{equation*}
    \begin{tikzcd}[row sep=scriptsize]
      x & y \\
      {x'} & {y'} \\
      {x''} & {y''}
      \arrow["f"', from=1-1, to=2-1]
      \arrow["g", from=1-2, to=2-2]
      \arrow[""{name=0, anchor=center, inner sep=0}, "m", "\shortmid"{marking}, from=1-1, to=1-2]
      \arrow[""{name=1, anchor=center, inner sep=0}, "{m'}", "\shortmid"{marking}, from=2-1, to=2-2]
      \arrow["{f'}"', from=2-1, to=3-1]
      \arrow["{g'}", from=2-2, to=3-2]
      \arrow[""{name=2, anchor=center, inner sep=0}, "{m''}"', "\shortmid"{marking}, from=3-1, to=3-2]
      \arrow["\alpha"{description}, draw=none, from=0, to=1]
      \arrow["{\alpha'}"{description}, draw=none, from=1, to=2]
    \end{tikzcd}
    \;\rightsquigarrow\;
    \begin{tikzcd}[row sep=scriptsize]
      x & y & {y'} & {y''} \\
      x & {x'} & {y'} & {y''} \\
      x & {x'} & {x''} & {y''}
      \arrow["m", "\shortmid"{marking}, from=1-1, to=1-2]
      \arrow["{m'}", "\shortmid"{marking}, from=2-2, to=2-3]
      \arrow["{m''}"', "\shortmid"{marking}, from=3-3, to=3-4]
      \arrow["{g_!}", "\shortmid"{marking}, from=1-2, to=1-3]
      \arrow[""{name=0, anchor=center, inner sep=0}, "{f_!}", "\shortmid"{marking}, from=2-1, to=2-2]
      \arrow[""{name=1, anchor=center, inner sep=0}, "{g'_!}"', "\shortmid"{marking}, from=2-3, to=2-4]
      \arrow[""{name=2, anchor=center, inner sep=0}, "{g'_!}", "\shortmid"{marking}, from=1-3, to=1-4]
      \arrow[Rightarrow, no head, from=1-3, to=2-3]
      \arrow[Rightarrow, no head, from=1-1, to=2-1]
      \arrow["{\alpha_!}"{description}, draw=none, from=1-2, to=2-2]
      \arrow["{f'_!}"', "\shortmid"{marking}, from=3-2, to=3-3]
      \arrow[Rightarrow, no head, from=2-2, to=3-2]
      \arrow[Rightarrow, no head, from=2-4, to=3-4]
      \arrow[Rightarrow, no head, from=1-4, to=2-4]
      \arrow[""{name=3, anchor=center, inner sep=0}, "{f_!}"', "\shortmid"{marking}, from=3-1, to=3-2]
      \arrow[Rightarrow, no head, from=2-1, to=3-1]
      \arrow["{\alpha'_!}"{description}, draw=none, from=2-3, to=3-3]
      \arrow["{1_{g'_!}}"{description}, draw=none, from=2, to=1]
      \arrow["{1_{f_!}}"{description}, draw=none, from=0, to=3]
    \end{tikzcd}
    \qquad\text{and}\qquad
    \begin{tikzcd}
      x & y \\
      x & y
      \arrow[""{name=0, anchor=center, inner sep=0}, "m", "\shortmid"{marking}, from=1-1, to=1-2]
      \arrow[""{name=1, anchor=center, inner sep=0}, "m"', "\shortmid"{marking}, from=2-1, to=2-2]
      \arrow[Rightarrow, no head, from=1-1, to=2-1]
      \arrow[Rightarrow, no head, from=1-2, to=2-2]
      \arrow["{1_m}"{description}, draw=none, from=0, to=1]
    \end{tikzcd}
    \;\rightsquigarrow\;
    \begin{tikzcd}
      x & y \\
      x & y
      \arrow[""{name=0, anchor=center, inner sep=0}, "m", "\shortmid"{marking}, from=1-1, to=1-2]
      \arrow[""{name=1, anchor=center, inner sep=0}, "m"', "\shortmid"{marking}, from=2-1, to=2-2]
      \arrow[Rightarrow, no head, from=1-1, to=2-1]
      \arrow[Rightarrow, no head, from=1-2, to=2-2]
      \arrow["{1_m}"{description}, draw=none, from=0, to=1]
    \end{tikzcd}.
  \end{equation*}
\end{lemma}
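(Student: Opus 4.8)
The plan is to prove both statements by a direct computation, using the explicit form of the globular transpose together with the known behaviour of companions under composition. Specializing \cref{eq:sliding-companion} to the globular case, as in the proof of \cref{lem:sliding-globular} (where the side arrows $f$ and $g'$ appearing there are taken to be identities), the globular transpose of a cell $\gamma$ with top proarrow $p$, bottom proarrow $q$, left arrow $a$, and right arrow $b$ is, up to the unit isomorphisms suppressed by our convention, the horizontal composite $\eta_a \odot \gamma \odot \varepsilon_b$, where $\eta_a$ and $\varepsilon_b$ denote the chosen unit and counit of the companions $a_!$ and $b_!$. Hence $\alpha_! = \eta_f \odot \alpha \odot \varepsilon_g$ and $\alpha'_! = \eta_{f'} \odot \alpha' \odot \varepsilon_{g'}$, while, since the vertical composite $\alpha \cdot \alpha'$ has left and right boundary arrows $f' \circ f$ and $g' \circ g$ with chosen companions $f_! \odot f'_!$ and $g_! \odot g'_!$ by hypothesis, also $(\alpha \cdot \alpha')_! = \eta_{f' \circ f} \odot (\alpha \cdot \alpha') \odot \varepsilon_{g' \circ g}$.

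I would then invoke \cite[Lemma 3.13]{shulman2010} --- already used in \cref{lem:sliding-functoriality-external-special} --- for the unit and counit of a composite companion: up to unit isomorphisms, $\eta_{f' \circ f} = \eta_f \cdot (1_{f_!} \odot \eta_{f'})$ and $\varepsilon_{g' \circ g} = (\varepsilon_g \odot 1_{g'_!}) \cdot \varepsilon_{g'}$. Substituting these into $(\alpha \cdot \alpha')_! = \eta_{f' \circ f} \odot (\alpha \cdot \alpha') \odot \varepsilon_{g' \circ g}$ exhibits it as the pasting $\bigl(\eta_f \cdot (1_{f_!} \odot \eta_{f'})\bigr) \odot (\alpha \cdot \alpha') \odot \bigl((\varepsilon_g \odot 1_{g'_!}) \cdot \varepsilon_{g'}\bigr)$, which the interchange law splits vertically as
\[
  (\alpha \cdot \alpha')_! = \bigl(\eta_f \odot \alpha \odot \varepsilon_g \odot 1_{g'_!}\bigr) \cdot \bigl(1_{f_!} \odot \eta_{f'} \odot \alpha' \odot \varepsilon_{g'}\bigr).
\]
Regrouping the first factor as $(\eta_f \odot \alpha \odot \varepsilon_g) \odot 1_{g'_!} = \alpha_! \odot 1_{g'_!}$ and the second as $1_{f_!} \odot (\eta_{f'} \odot \alpha' \odot \varepsilon_{g'}) = 1_{f_!} \odot \alpha'_!$ then gives exactly the asserted cell. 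The second statement is the special case $f = 1_x$, $g = 1_y$, $\alpha = 1_m$: since by \cite[Lemma 3.12]{shulman2010} the chosen companion $\id_x$ of $1_x$ has identity unit and counit, and likewise for $1_y$, the formula collapses to $(1_m)_! = 1_{\id_x} \odot 1_m \odot 1_{\id_y} = 1_{\id_x \odot m \odot \id_y}$, which is $1_m$ after the unitors.

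The argument is essentially bookkeeping, so the one thing requiring care is inserting the suppressed unit isomorphisms $\id \odot p \cong p \cong p \odot \id$ coherently --- notably when splitting $\eta_{f' \circ f}$ and $\varepsilon_{g' \circ g}$, and when regrouping the horizontal composites --- and confirming that the interchange law applies to the pasting exactly as drawn. In contrast to some of the companion functoriality lemmas for horizontal composition, no triangle identity is needed: because $\alpha$ and $\alpha'$ are glued along a proarrow rather than an arrow, transposing their composite introduces no auxiliary companion that would later have to be cancelled by a zig-zag.
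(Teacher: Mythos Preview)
Your argument is correct and is essentially the approach the paper has in mind: the paper omits the proof, saying only that it is ``similar to that of the previous one,'' and that previous proof likewise unpacks the explicit unit and counit of a composite companion from \cite[Lemma~3.13]{shulman2010} and reduces the claim to a direct pasting computation. Your forward computation of $(\alpha\cdot\alpha')_!$ via $\eta_{f'\circ f}\odot(\alpha\cdot\alpha')\odot\varepsilon_{g'\circ g}$, followed by substituting the composite-companion formulas and applying interchange, is exactly the dual of the paper's backward computation in \cref{lem:sliding-functoriality-external-special}; the bookkeeping with unitors that you flag is indeed the only point requiring care, and you handle it appropriately.
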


The proof this lemma is similar to that of the previous one and is omitted.

\begin{lemma}[Naturality of sliding]
  \label{lem:sliding-naturality}
  Suppose $f: x \to w$ and $g: y \to z$ are arrows with companions in a double
  category. Then an equation between cells as on the left
  \begin{equation*}
    \begin{tikzcd}
      x & y \\
      w & z \\
      w & z
      \arrow["f"', from=1-1, to=2-1]
      \arrow["g", from=1-2, to=2-2]
      \arrow[""{name=0, anchor=center, inner sep=0}, "q"', "\shortmid"{marking}, from=3-1, to=3-2]
      \arrow[""{name=1, anchor=center, inner sep=0}, "m", "\shortmid"{marking}, from=1-1, to=1-2]
      \arrow[""{name=2, anchor=center, inner sep=0}, "n", "\shortmid"{marking}, from=2-1, to=2-2]
      \arrow[Rightarrow, no head, from=2-1, to=3-1]
      \arrow[Rightarrow, no head, from=2-2, to=3-2]
      \arrow["\alpha"{description}, draw=none, from=1, to=2]
      \arrow["\delta"{description}, draw=none, from=2, to=0]
    \end{tikzcd}
    =
    \begin{tikzcd}
      x & y \\
      x & y \\
      w & z
      \arrow["f"', from=2-1, to=3-1]
      \arrow["g", from=2-2, to=3-2]
      \arrow[""{name=0, anchor=center, inner sep=0}, "p", "\shortmid"{marking}, from=2-1, to=2-2]
      \arrow[""{name=1, anchor=center, inner sep=0}, "q"', "\shortmid"{marking}, from=3-1, to=3-2]
      \arrow[""{name=2, anchor=center, inner sep=0}, "m", "\shortmid"{marking}, from=1-1, to=1-2]
      \arrow[Rightarrow, no head, from=1-1, to=2-1]
      \arrow[Rightarrow, no head, from=1-2, to=2-2]
      \arrow["\gamma"{description}, draw=none, from=2, to=0]
      \arrow["\beta"{description}, draw=none, from=0, to=1]
    \end{tikzcd}
    \qquad\leadsto\qquad
    \begin{tikzcd}
      x & y & z \\
      x & w & z \\
      x & w & z
      \arrow[""{name=0, anchor=center, inner sep=0}, "q"', "\shortmid"{marking}, from=3-2, to=3-3]
      \arrow["m", "\shortmid"{marking}, from=1-1, to=1-2]
      \arrow[""{name=1, anchor=center, inner sep=0}, "n", "\shortmid"{marking}, from=2-2, to=2-3]
      \arrow[Rightarrow, no head, from=2-2, to=3-2]
      \arrow[Rightarrow, no head, from=2-3, to=3-3]
      \arrow[""{name=2, anchor=center, inner sep=0}, "{f_!}", "\shortmid"{marking}, from=2-1, to=2-2]
      \arrow["{g_!}", "\shortmid"{marking}, from=1-2, to=1-3]
      \arrow[Rightarrow, no head, from=1-3, to=2-3]
      \arrow["{\alpha_!}"{description}, draw=none, from=1-2, to=2-2]
      \arrow[Rightarrow, no head, from=1-1, to=2-1]
      \arrow[""{name=3, anchor=center, inner sep=0}, "{f_!}"', "\shortmid"{marking}, from=3-1, to=3-2]
      \arrow[Rightarrow, no head, from=2-1, to=3-1]
      \arrow["\delta"{description}, draw=none, from=1, to=0]
      \arrow["{1_{f_!}}"{description}, draw=none, from=2, to=3]
    \end{tikzcd}
    =
    \begin{tikzcd}
      x & y & z \\
      x & y & z \\
      x & w & z
      \arrow[""{name=0, anchor=center, inner sep=0}, "p"', "\shortmid"{marking}, from=2-1, to=2-2]
      \arrow["q"', "\shortmid"{marking}, from=3-2, to=3-3]
      \arrow[""{name=1, anchor=center, inner sep=0}, "m", "\shortmid"{marking}, from=1-1, to=1-2]
      \arrow[Rightarrow, no head, from=1-1, to=2-1]
      \arrow[Rightarrow, no head, from=1-2, to=2-2]
      \arrow["{f_!}"', "\shortmid"{marking}, from=3-1, to=3-2]
      \arrow[""{name=2, anchor=center, inner sep=0}, "{g_!}"', "\shortmid"{marking}, from=2-2, to=2-3]
      \arrow[Rightarrow, no head, from=2-3, to=3-3]
      \arrow[Rightarrow, no head, from=2-1, to=3-1]
      \arrow["{\beta_!}"{description}, draw=none, from=2-2, to=3-2]
      \arrow[Rightarrow, no head, from=1-3, to=2-3]
      \arrow[""{name=3, anchor=center, inner sep=0}, "{g_!}", "\shortmid"{marking}, from=1-2, to=1-3]
      \arrow["\gamma"{description}, draw=none, from=1, to=0]
      \arrow["{1_{g_!}}"{description}, draw=none, from=3, to=2]
    \end{tikzcd}
  \end{equation*}
  implies the equation between cells on the right.
\end{lemma}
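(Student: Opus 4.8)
The plan is to read both displayed equations as identities between cells composed in the internal (vertical) direction---written $\cdot$, as in the equation $\eta \cdot \varepsilon = \id_f$ in the definition of a companion---and then to deduce the claim from the functoriality of sliding under internal composition, i.e.\ from \cref{lem:sliding-functoriality-internal-globular}. Concretely, the left-hand equation is precisely the identity $\alpha \cdot \delta = \gamma \cdot \beta$ between cells $m \Rightarrow q$ with left leg $f$ and right leg $g$, the globular cells $\delta$ and $\gamma$ contributing only identity legs. After absorbing the unit isomorphisms $f_! \odot \id_z \cong f_!$ and $\id_x \odot g_! \cong g_!$, the right-hand equation reads $\alpha_! \cdot (1_{f_!} \odot \delta) = (\gamma \odot 1_{g_!}) \cdot \beta_!$, an identity between globular cells $m \odot g_! \Rightarrow f_! \odot q$.

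Granting these two readings, it suffices to establish the two whiskering identities
\[
  (\alpha \cdot \delta)_! \;=\; \alpha_! \cdot (1_{f_!} \odot \delta)
  \qquad\text{and}\qquad
  (\gamma \cdot \beta)_! \;=\; (\gamma \odot 1_{g_!}) \cdot \beta_!,
\]
for then the hypothesis $\alpha \cdot \delta = \gamma \cdot \beta$, passed through the single-valued assignment $(-)_!$ of \cref{lem:sliding-globular}, immediately yields $\alpha_! \cdot (1_{f_!} \odot \delta) = (\gamma \odot 1_{g_!}) \cdot \beta_!$. I would obtain each whiskering identity as a degenerate instance of \cref{lem:sliding-functoriality-internal-globular}, taking the companions of the identity arrows involved to be the identity proarrows: the first is the instance whose second factor is the globular cell $\delta$ (so that $f' = \id_w$, $g' = \id_z$, $(\id_z)_! = \id_z$, and the sliding transpose $\delta_!$ is $\delta$ up to unit isomorphisms), and the second is the instance whose first factor is $\gamma$. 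Alternatively, each identity follows just as directly from the explicit formula \cref{eq:sliding-companion} for $(-)_!$ and the interchange law: $\alpha_!$ is the pasting of the unit $\eta$ of $f_!$ at the lower left, of $\alpha$ in the middle, and of the counit $\varepsilon$ of $g_!$ at the upper right, and one simply slides $\delta$ in below $\alpha$, resp.\ $\gamma$ in above $\alpha$, which is legitimate since these cells occupy regions disjoint from those of $\eta$ and $\varepsilon$.

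With the whiskering identities in hand, the argument concludes in one line, as above. There is no genuine obstacle here: the whole content is the functoriality of sliding under internal composition, already isolated in \cref{lem:sliding-functoriality-internal-globular}. The only thing demanding care is the bookkeeping---matching the cells appearing in the two pasting diagrams of the statement to the symbolic equations above, confirming in particular that $\delta$ and $\gamma$ whisker onto the $n$-component of $f_! \odot n$ and the $m$-component of $m \odot g_!$ respectively, as the diagrams indicate---together with keeping track of the suppressed unit isomorphisms throughout.
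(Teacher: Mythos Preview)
Your proposal is correct and is essentially the paper's argument, just unpacked: the paper's entire proof is the one-line ``compose on the left and right with $\eta$ and $\varepsilon$ as in \cref{eq:sliding-companion},'' which is precisely what you call the ``alternative'' route via the explicit formula and interchange. Your primary route through \cref{lem:sliding-functoriality-internal-globular} is a valid repackaging of that same computation (your two whiskering identities are exactly what falls out of $\eta \odot (\alpha\cdot\delta) \odot \varepsilon$ and $\eta \odot (\gamma\cdot\beta) \odot \varepsilon$ after interchange), so there is no genuine difference in content.
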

\begin{proof}
  Follows directly by composing on the left and right with $\eta$ and
  $\varepsilon$ as in \cref{eq:sliding-companion}.
\end{proof}

\subsection{Transposition and companions as biadjoints}
\label{sec:companions-biadjoint}

The operation of sliding and its many functoriality properties can be distilled
into a single result, characterizing companions in double categories as a right
biadjoint to transposing \emph{strict} double categories. Let us first recall
the double-categorical transpose.

\begin{construction}[Transpose]
  The \define{transpose} $\dbl{D}^\top$ of a strict double category $\dbl{D}$ is
  the strict double category that exchanges the arrows and proarrows of
  $\dbl{D}$, hence transposes the cells of $\dbl{D}$
  \cite[\S{3.2.2}]{grandis2019}. Moreover, transposition is the object part of a
  2-functor
  \begin{equation*}
    (-)^\top: \StrDbl_\pro \to \StrDbl.
  \end{equation*}
  Here the codomain $\StrDbl$ is the 2-category of strict double categories,
  strict double functors, and (tight) natural transformations. The domain
  $\StrDbl_\pro$ is the same 2-category, except that its 2-morphisms are now
  \emph{loose} or \emph{horizontal} transformations, which we call
  \define{strict protransformations} in \cref{def:lax-protransformation}.
\end{construction}

Let $\StrDbl_{!,\pro}$ denote the 2-category of strict double categories
equipped with a functorial choice of companion for each arrow, strict double
functors preserving the chosen companions, and strict protransformations. The
transposition 2-functor now restricts/extends to a 2-functor
\begin{equation*}
    (-)^\top: \StrDbl_{!,\pro} \to \Dbl,
\end{equation*}
where, as usual, $\Dbl$ is the 2-category of (pseudo) double categories,
(pseudo) double functors, and natural transformations. Note that while it is
usually suspect to ask for a strictly functorial choice of companions, such
choices are made canonically when constructing the following 2-functor.

\begin{construction}[Companions as a 2-functor]
  There is a 2-functor
  \begin{equation*}
    \Comp: \Dbl \to \StrDbl_{!,\pro}
  \end{equation*}
  that sends a double category $\dbl{D}$ to the strict double category
  $\Comp(\dbl{D})$ whose objects are those of $\dbl{D}$, arrows are arrows in
  $\dbl{D}$ that have companions, proarrows are arbitrary arrows in $\dbl{D}$,
  and cells
  \begin{equation*}
    \begin{tikzcd}
      x & y \\
      w & z
      \arrow["f"', from=1-1, to=2-1]
      \arrow[""{name=0, anchor=center, inner sep=0}, "h", "\shortmid"{marking}, from=1-1, to=1-2]
      \arrow[""{name=1, anchor=center, inner sep=0}, "k"', "\shortmid"{marking}, from=2-1, to=2-2]
      \arrow["g", from=1-2, to=2-2]
      \arrow["\alpha"{description}, draw=none, from=0, to=1]
    \end{tikzcd}
    \quad\text{in } \Comp(\dbl{D})
    \qquad\leftrightsquigarrow\qquad
    \begin{tikzcd}[row sep=scriptsize]
      x & x \\
      y & w \\
      z & z
      \arrow["f", from=1-2, to=2-2]
      \arrow["k", from=2-2, to=3-2]
      \arrow["h"', from=1-1, to=2-1]
      \arrow["g"', from=2-1, to=3-1]
      \arrow[""{name=0, anchor=center, inner sep=0}, "{\id_x}", "\shortmid"{marking}, from=1-1, to=1-2]
      \arrow[""{name=1, anchor=center, inner sep=0}, "{\id_z}"', "\shortmid"{marking}, from=3-1, to=3-2]
      \arrow["\alpha"{description}, draw=none, from=0, to=1]
    \end{tikzcd}
    \quad\text{in } \dbl{D}
  \end{equation*}
  as on the left are special cells in $\dbl{D}$ as on the right. Cells in
  $\Comp(\dbl{D})$ compose via the evident pastings. By construction, every
  arrow in $\Comp(\dbl{D})$ has a canonical choice of companion, namely itself,
  with the external identity in $\dbl{D}$ giving both binding cells.

  To complete the construction, a double functor $F: \dbl{D} \to \dbl{E}$ induces
  a strict double functor $\Comp F: \Comp \dbl{D} \to \Comp \dbl{E}$ that acts as
  the underlying 2-functor of $F$. Finally, given double functors
  $F,G: \dbl{D} \to \dbl{E}$, a natural transformation $\alpha: F \To G$ induces a
  strict protransformation $\Comp \alpha: \Comp F \proTo \Comp G$ comprising
  \begin{itemize}[noitemsep]
    \item for each object $x$ in $\dbl{D}$, the component
      $(\Comp \alpha)_x \coloneqq \alpha_x$, which is an arrow in $\dbl{E}$, hence a
      proarrow in $\Comp \dbl{E}$;
    \item for each arrow $f: x \to y$ in $\Comp \dbl{D}$, which is an arrow in
      $\dbl{D}$ with a companion, the component
      \begin{equation*}
        \begin{tikzcd}[column sep=large]
          Fx & Gx \\
          Fy & Gy
          \arrow[""{name=0, anchor=center, inner sep=0}, "{(\Comp\alpha)_x}", "\shortmid"{marking}, from=1-1, to=1-2]
          \arrow["Ff"', from=1-1, to=2-1]
          \arrow[""{name=1, anchor=center, inner sep=0}, "{(\Comp\alpha)_y}"', "\shortmid"{marking}, from=2-1, to=2-2]
          \arrow["Gf", from=1-2, to=2-2]
          \arrow["{(\Comp\alpha)_f}"{description}, draw=none, from=0, to=1]
        \end{tikzcd}
        \quad\coloneqq\quad
        \begin{tikzcd}[row sep=scriptsize]
          Fx & Fx \\
          Gx & Fy \\
          Gy & Gy
          \arrow[""{name=0, anchor=center, inner sep=0}, "{\id_{Fx}}", "\shortmid"{marking}, from=1-1, to=1-2]
          \arrow[""{name=1, anchor=center, inner sep=0}, "{\id_{Gy}}"', "\shortmid"{marking}, from=3-1, to=3-2]
          \arrow["{\alpha_x}"', from=1-1, to=2-1]
          \arrow["Gf"', from=2-1, to=3-1]
          \arrow["Ff", from=1-2, to=2-2]
          \arrow["{\alpha_y}", from=2-2, to=3-2]
          \arrow["\id"{description}, draw=none, from=0, to=1]
        \end{tikzcd},
      \end{equation*}
      the external identity in $\dbl{E}$ on the naturality square of $\alpha$ at
      $f$. \qedhere
  \end{itemize}
\end{construction}

The 2-functors just constructed are biadjoint to each other:

\begin{theorem}[Companions as a biadjoint] \label{thm:companions-biadjunction}
  The 2-category of strict double categories with a functorial choice of
  companions and the 2-category of double categories are related by a
  biadjunction\footnote{The proof constructs only an ``incoherent''
    biadjunction. However, any such biadjunction can be upgraded to a coherent
    one satisfying the swallowtail identities \cite[\mbox{Lemma
      1.3.9}]{verity1992}.}
  \begin{equation*}
    \begin{tikzcd}
      {\StrDbl_{!,\pro}} & \Dbl
      \arrow[""{name=0, anchor=center, inner sep=0}, "{(-)^\top}", curve={height=-18pt}, from=1-1, to=1-2]
      \arrow[""{name=1, anchor=center, inner sep=0}, "\Comp", curve={height=-18pt}, from=1-2, to=1-1]
      \arrow["\dashv"{anchor=center, rotate=-90}, draw=none, from=0, to=1]
    \end{tikzcd}.
  \end{equation*}
  The component of the counit at a double category $\dbl{D}$ is a double functor
  \begin{equation*}
    (\Comp\dbl{D})^\top \to \dbl{D},
  \end{equation*}
  that is the identity on objects and arrows and sends proarrows (arrows in
  $\dbl{D}$ that have companions) to choices of companions in $\dbl{D}$.
\end{theorem}
\begin{proof}
  We first construct the unit of the biadjunction. Given a strict double
  category $\dbl{C}$ with a functorial choice of companions, the component of
  the unit $\eta$ at $\dbl{C}$ is the strict double functor
  \begin{equation*}
    \eta_{\dbl{C}}: \dbl{C} \to \Comp(\dbl{C}^\top)
  \end{equation*}
  that is the identity on objects and proarrows; sends each arrow $f$ in
  $\dbl{C}$ to the chosen companion $f_!$ in $\dbl{C}$, which is an arrow in
  $\dbl{C}^\top$ with a companion, namely $f$; and sends a cell $\stdInlineCell{\alpha}$
  in $\dbl{C}$ to the cell
  \begin{equation*}
    \begin{tikzcd}
      x & y \\
      w & z
      \arrow["{f_!}"', from=1-1, to=2-1]
      \arrow["{g_!}", from=1-2, to=2-2]
      \arrow[""{name=0, anchor=center, inner sep=0}, "m", "\shortmid"{marking}, from=1-1, to=1-2]
      \arrow[""{name=1, anchor=center, inner sep=0}, "n"', "\shortmid"{marking}, from=2-1, to=2-2]
      \arrow["{\eta_{\dbl{C}}(\alpha)}"{description}, draw=none, from=0, to=1]
    \end{tikzcd}
    \quad\text{in } \Comp(\dbl{C}^\top)
    \qquad\leftrightsquigarrow\qquad
    \begin{tikzcd}
      x & y & z \\
      x & w & z
      \arrow["{g_!}", "\shortmid"{marking}, from=1-2, to=1-3]
      \arrow["m", "\shortmid"{marking}, from=1-1, to=1-2]
      \arrow["n"', "\shortmid"{marking}, from=2-2, to=2-3]
      \arrow["{f_!}"', "\shortmid"{marking}, from=2-1, to=2-2]
      \arrow[Rightarrow, no head, from=1-1, to=2-1]
      \arrow["{\alpha_!}"{description}, draw=none, from=1-2, to=2-2]
      \arrow[Rightarrow, no head, from=1-3, to=2-3]
    \end{tikzcd}
    \quad\text{in } \dbl{C}
  \end{equation*}
  obtained by reshaping the cell $\alpha$ in $\dbl{C}$ (\cref{lem:sliding-globular}).
  The strict functoriality of this assignment follows from the functorial choice
  of companions in $\dbl{C}$ along with
  \cref{lem:sliding-functoriality-external-globular,lem:sliding-functoriality-internal-globular}.
  Moreover, the components $\eta_{\dbl{C}}$ assemble into a (strict) 2-natural
  transformation $\eta$ since, by definition, the double functors
  $F: \dbl{C} \to \dbl{C}'$ under consideration preserve chosen companions.

  We now construct the counit of the biadjunction. Given a double category
  $\dbl{D}$, the component at $\dbl{D}$ of the counit $\varepsilon$ is the double functor
  \begin{equation*}
    \varepsilon_{\dbl{D}}: (\Comp\dbl{D})^\top \to \dbl{D}
  \end{equation*}
  that is the identity on objects and arrows; sends an arrow $h: x \to y$ in
  $\dbl{D}$ with a companion to any choice of companion $h_!: x \proto y$; and
  acts on a cell $\inlineCell{x}{y}{w}{z}{h}{k}{f}{g}{\alpha}$ in
  $(\Comp\dbl{D})^\top$ by sliding in $\dbl{D}$ (\cref{lem:sliding-general}):
  \begin{equation*}
    \varepsilon_{\dbl{D}}: \quad
    \begin{tikzcd}[row sep=scriptsize]
      x & x \\
      w & y \\
      z & z
      \arrow["f"', from=1-1, to=2-1]
      \arrow["k"', from=2-1, to=3-1]
      \arrow["g", from=2-2, to=3-2]
      \arrow["h", from=1-2, to=2-2]
      \arrow[""{name=0, anchor=center, inner sep=0}, "{\id_x}", "\shortmid"{marking}, from=1-1, to=1-2]
      \arrow[""{name=1, anchor=center, inner sep=0}, "{\id_z}"', "\shortmid"{marking}, from=3-1, to=3-2]
      \arrow["\alpha"{description}, draw=none, from=0, to=1]
    \end{tikzcd}
    \quad\mapsto\quad
    \begin{tikzcd}
      x & y \\
      w & z
      \arrow["f"', from=1-1, to=2-1]
      \arrow[""{name=0, anchor=center, inner sep=0}, "{h_!}", "\shortmid"{marking}, from=1-1, to=1-2]
      \arrow["g", from=1-2, to=2-2]
      \arrow[""{name=1, anchor=center, inner sep=0}, "{k_!}"', "\shortmid"{marking}, from=2-1, to=2-2]
      \arrow["{\alpha_!}"{description}, draw=none, from=0, to=1]
    \end{tikzcd}.
  \end{equation*}
  The (pseudo) functoriality of this assignment follows from
  \cref{lem:sliding-functoriality-internal-special,lem:sliding-functoriality-external-special}.
  Moreover, the components $\varepsilon_{\dbl{D}}$ asssemble into a
  pseudonatural transformation $\varepsilon$ whose naturality comparisons are
  the canonical isomorphisms witnessing that any (pseudo) double functor
  $F: \dbl{D} \to \dbl{D}'$ preserves companions \cite[\mbox{Lemma
    3.13}]{hansen2019}.

  It remains to show that the triangle identities hold, at least up to
  invertible modifications.
  \begin{equation*}
    \begin{tikzcd}
      {\dbl{C}^\top} & {\Comp(\dbl{C}^\top)^\top} \\
      & {\dbl{C}^\top}
      \arrow[""{name=0, anchor=center, inner sep=0}, "{1_{\dbl{C}^\top}}"', from=1-1, to=2-2]
      \arrow["{\eta_{\dbl{C}}^\top}", from=1-1, to=1-2]
      \arrow["{\varepsilon_{\dbl{C}^\top}}", from=1-2, to=2-2]
      \arrow["\sim", shorten <=4pt, shorten >=4pt, Rightarrow, from=0, to=1-2]
    \end{tikzcd}
    \qquad\qquad
    \begin{tikzcd}
      {\Comp\dbl{D}} & {\Comp((\Comp\dbl{D})^\top)} \\
      & {\Comp\dbl{D}}
      \arrow["{\eta_{\Comp\dbl{D}}}", from=1-1, to=1-2]
      \arrow["{1_{\Comp\dbl{D}}}"', from=1-1, to=2-2]
      \arrow["{\Comp \varepsilon_{\dbl{D}}}", from=1-2, to=2-2]
    \end{tikzcd}
  \end{equation*}
  For any strict double category $\dbl{C}$ with a functorial choice of
  companions, the first triangle can fail to commute because
  $\varepsilon_{\dbl{C}^\top}$ need not choose the companion of an arrow
  $\eta_{\dbl{C}}^\top(f) = f_!$ in $\dbl{C}^\top$ to be $f$ itself, but only
  some proarrow isomorphic to $f$ in $\dbl{C}^\top$. Correcting this discrepancy
  fills the triangle with an invertible icon: a natural isomorphism whose
  component arrows are identities. For any double category $\dbl{D}$, the second
  triangle actually commutes on the nose, because, although
  $\varepsilon_{\dbl{D}}$ is pseudo, $\Comp \varepsilon_{\dbl{D}}$ uses only the
  2-functor underlying it.
\end{proof}

This result refines Grandis and Paré's earlier \cite[\mbox{Theorem
  1.7}]{grandis2004} in several ways. First, the adjunction is between
2-categories rather than mere categories. Also, by using strict double
categories rather than 2-categories on the left side of the adjunction, we can
extract the arrows of a double category that have companions without discarding
the other arrows along the way. Another difference is that, on the right side of
the adjunction, we work with the \emph{property} of having a companion, rather
than the \emph{structure} of a companion pair, and we allow pseudo double
categories and double functors rather merely strict ones. This requires
weakening the adjunction to a biadjunction.

\printbibliography[heading=bibintoc]

\end{document}